\theoremstyle{plain}
\newtheorem{thm}{Theorem}[section]
\newtheorem{lem}[thm]{Lemma}
\newtheorem{prop}[thm]{Proposition}
\newtheorem{claim}[thm]{Claim}
\theoremstyle{definition}
\newtheorem{defn}[thm]{Definition}
\newtheorem{rem}[thm]{Remark}
\DeclareMathOperator{\Lip}{Lip}
\DeclareMathOperator{\OrbLip}{OrbLip}
\DeclareMathOperator{\Fix}{Fix}
\DeclareMathOperator{\Int}{int}
\DeclareMathOperator{\Xnf}{X_{nf}}
\DeclareMathOperator{\Edge}{\partial}
\DeclareMathOperator{\spann}{span}
\DeclareMathOperator{\diam}{diam}
\DeclareMathOperator{\shift}{shift}
\DeclareMathOperator{\con}{con}
\DeclareMathOperator{\equi}{equivariant}
\DeclareMathOperator{\supp}{supp}
\def\R{\mathbb{R}}
\def\Rk{\mathbb{R}^k}
\def\N{\mathbb{N}}
\def\Z{\mathbb{Z}}
\def\tt{\mathbf t}
\def\ss{\mathbf s}
\def\rr{\mathbf r}
\def\yy{\mathbf y}
\def\aa{\mathbf a}
\def\ll{\mathbf \lambda}
\def\ww{\mathbf w}
\def\AA{\mathbf A}
\def\DD{\mathbf D}
\def\LL{\mathbf \Lambda}
\def\bb{\mathbf b}
\def\uu{\mathbf u}
\def\vv{\mathbf v}
\def\00{\mathbf 0}
\numberwithin{figure}{section}
\begin{document}
\title{{ \Large\bf{On the representation of measurable and  continuous  dynamical systems by Lipschitz functions}}
}
\author{Yonatan Gutman,\,\,\,\,  Qiang Huo
 }
\date{\today}
 \maketitle
\begin{minipage}{14cm} 

{\bf Abstract:}
Two representations theorems are presented:
\begin{enumerate}
    \item Any Borel action  of a second countable locally compact group  $G$ on a standard Borel space $X$  admits an injective $G$-equivariant Borel map into the shift space of $1$-Lipschitz functions from $G$ to the unit interval $\Lip_1(G)$. 
    \item 
    Any continuous action of $\R^k$ ($k\in \N$) on a metrizable compact space $X$  admits an injective $G$-equivariant continuous map into $\Lip_1(\R^k)$ if the fixed point set $\Fix(X,\R^k)$ embeds into $[0,1]$ and $(X,\mathbb{R}^k)$ is \textit{weakly locally free}, that is $\mathbb{R}^k$ acts freely outside the  
    fixed point set.
\end{enumerate}
 The first theorem generalizes a theorem from 1973 by Eberlein for $\mathbb{R}$-flows. The second theorem generalizes  a Lipschitz refinement of the Bebutov--Kakutani theorem proven by Gutman, Jin and Tsukamoto in 2019. \\

 \noindent{\it Keywords:} Topological models; Borel systems;
  Eberlein's theorem; Real flows; Multidimensional flows; Second countable locally compact groups; Lipschitz functions. \\
\noindent{\it 2020 AMS Subject Classification: 37A05; 37B05; 54H05; 54H20.}
\end{minipage}
 \maketitle
\numberwithin{equation}{section}
\newtheorem{theorem}{Theorem}[section]
\newtheorem{lemma}[theorem]{Lemma}
\newtheorem{proposition}[theorem]{Proposition}
\newtheorem{corollary}[theorem]{Corollary}

\tableofcontents

\section{Introduction}

Consider $\Lip_{1}(\Rk)$, the space of $1$-Lipschitz functions (w.r.t. the Euclidean norms) from $\Rk$ to the unit interval , equipped with the compact-open topology, for $k\geq 1$. It admits the natural $\Rk$-shift action $\tt\cdot g( \ss)=g(\ss+\tt)$ $(\tt,\mathbf s \in \R^k)$. In 1973 Eberlein proved the following remarkable theorem:

\begin{thm}(\cite[Theorem 3.1]{Ebe73})
\label{Eberlein}\footnote{See also \cite[Section 2.4]{EFKKS17}. The article \cite{Ebe73} is based on Eberlein's Ph.D thesis and is in German. The notions used in the statement of the theorem are defined in the Preliminaries section.}
The topological flow $(\Lip_{1}(\R),\shift)$ is a topological model for all free measure-preserving $\mathbb{R}$-systems.
\end{thm}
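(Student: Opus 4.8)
The plan is to realise an arbitrary free measurable $\R$-flow $(X,\mu,(T_t)_{t\in\R})$ as a $\shift$-invariant Borel probability measure on $\Lip_{1}(\R)$ by an explicit equivariant coding built on the Ambrose--Kakutani representation of flows. Since the flow is free it has no fixed points, so by the Ambrose--Kakutani theorem we may assume, up to measurable isomorphism, that $X=\{(\omega,s):\omega\in\Omega,\ 0\le s<\rho(\omega)\}$ is the flow built under a roof function: $(\Omega,\nu,S)$ is a measure-preserving system on a standard Borel space, $\rho\colon\Omega\to\R$ is Borel with $\inf\rho=c_{0}>0$, the flow increases $s$, and it wraps $(\omega,\rho(\omega))$ to $(S\omega,0)$. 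For a point $(\omega,s)$, identify its $\R$-orbit with $\R$ via $t\mapsto T_{t}(\omega,s)$; the times $t$ at which the orbit meets $\Omega\times\{0\}$ form a locally finite set $\{a_{n}\}_{n\in\Z}$ with $a_{0}=-s\le 0<a_{1}=\rho(\omega)-s$ and $a_{n+1}-a_{n}=\rho(S^{n}\omega)\ge c_{0}$, giving a Borel, flow-equivariant tessellation of $\R$ into tiles $[a_{n},a_{n+1}]$. Finally fix once and for all a Borel injection $\iota\colon\Omega\to[0,1]$; it exists because $\Omega$ is a standard Borel space.

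I would then define $\Phi\colon X\to\Lip_{1}(\R)$, $\Phi(\omega,s)=\phi_{(\omega,s)}$, by summing two families of disjointly supported tent functions on $\R$. Fix $h:=\tfrac14\min\{c_{0},1\}$. At each tessellation point $a_{n}$ place a \emph{marker} tent of height $h$ and slopes $\pm1$ (base half-width $h$); in the $n$-th tile place a \emph{content} tent, namely the fixed tent of base half-width $\tfrac12 h$ centred at the midpoint $\tfrac12(a_{n}+a_{n+1})$ of the tile, scaled vertically by the factor $\iota(S^{n}\omega)\in[0,1]$. Since consecutive tessellation points lie at distance $\ge c_{0}\ge 4h$, all these tents have pairwise disjoint supports, so $\phi_{(\omega,s)}$ takes values in $[0,h]\subseteq[0,1]$ and is $1$-Lipschitz; hence $\phi_{(\omega,s)}\in\Lip_{1}(\R)$. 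The map $\Phi$ is Borel, because $(\omega,s)$ determines the tessellation $\{a_{n}\}_{n}$ and the numbers $\iota(S^{n}\omega)$ in a Borel way and these pass continuously to $\Lip_{1}(\R)$; and $\Phi$ is equivariant, because running the flow for time $t$ translates every $a_{n}$ by $-t$ and merely re-indexes the sequence $\{S^{n}\omega\}_{n}$ (also across a roof wrap), to which the resulting function is insensitive, so that $\phi_{T_{t}(\omega,s)}(\,\cdot\,)=\phi_{(\omega,s)}(\,\cdot\,+t)$, i.e.\ $\Phi\circ T_{t}=\shift_{t}\circ\Phi$.

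Next I would show $\Phi$ is injective off a null set. Work on the conull Borel set $\{s>0\}$ and let $\phi=\Phi(\omega,s)$. Content tents reach height at most $\tfrac12 h<h$, so $M:=\{t:\phi(t)=h\}$ equals the tessellation $\{a_{n}\}_{n}$, a locally finite subset of $\R$ read off from $\phi$. Then $a_{0}=\max\bigl(M\cap(-\infty,0)\bigr)=-s$ recovers $s$; $a_{1}=\min\bigl(M\cap(0,\infty)\bigr)$ identifies the tile through the origin; and the value of $\phi$ at that tile's midpoint is $\tfrac12 h\,\iota(\omega)$, recovering $\iota(\omega)$, hence $\omega$, since $\iota$ is injective. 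Thus $\Phi|_{\{s>0\}}$ is an injective Borel map between standard Borel spaces, so by the Lusin--Souslin theorem it is a Borel isomorphism onto a Borel subset of $\Lip_{1}(\R)$. Consequently $\nu:=\Phi_{*}\mu$ is a $\shift$-invariant Borel probability measure on $\Lip_{1}(\R)$ and $\Phi$ is an isomorphism of measure-preserving flows from $(X,\mu,(T_t))$ onto $(\Lip_{1}(\R),\nu,\shift)$. As $(X,\mu,(T_t))$ was an arbitrary free measurable $\R$-flow, $(\Lip_{1}(\R),\shift)$ is a topological model for all of them.

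The only genuinely load-bearing step is the first: producing from an arbitrary free flow a Borel, flow-equivariant tessellation of orbits whose tiles have a uniform positive lower bound on their lengths --- which is exactly what the Ambrose--Kakutani representation provides, freeness being essential. Everything afterwards is bookkeeping: choosing the tent dimensions relative to $c_{0}$ so that supports are disjoint (which makes the sum automatically $[0,1]$-valued and $1$-Lipschitz, and makes the marker set recoverable as $\{\phi=h\}$) and tracking indices through the roof wraps. Note that for $k=1$ a lower bound on tile lengths is all that is needed, with no time change and no two-valued-roof refinement, in contrast to the heavier $\R^{k}$ machinery used for $k\ge 2$.
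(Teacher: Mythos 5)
Your construction is correct, and it takes a genuinely different route from the one in the paper. The paper's proof of Theorem \ref{Eberlein} first produces \emph{countably many} orbital Lipschitz functions $g_{n,m}$ (averages of indicators of flow boxes over short time windows, Theorem \ref{good isomorphism}) to get an equivariant Borel isomorphism into the countable product $(\Lip_{1}(\R))^{\N}$, then analyzes the \emph{closure} of the image, shows its fixed-point set is $\{0^{\N}\}$ (Theorem \ref{structure}), and finally invokes the Gutman--Jin--Tsukamoto embedding theorem (Theorem \ref{GJT}), a Baire-category argument, to push the compact closure into a single copy of $\Lip_{1}(\R)$. You instead code directly into one copy of $\Lip_{1}(\R)$ by an explicit tent-function assignment along the Ambrose--Kakutani tessellation, and prove injectivity by an explicit decoding ($\{\phi=h\}$ recovers the markers, the content tent recovers $\iota(\omega)$). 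Your argument is shorter and entirely elementary --- it needs no compactness, no closure, and no embedding theorem --- but it is tied to the one-dimensional order structure of the return times (a bi-infinite sequence with uniform gaps), which is precisely what fails to generalize; the paper's two-step factorization (measurable coding into a product, then a topological embedding theorem) is what carries over to $\R^k$. Two small points to tidy up: (i) the set $\{s>0\}$ on which you verify injectivity is conull but not flow-invariant, whereas the paper's definition of measurable isomorphism requires invariant conull sets; this is harmless because your map is in fact injective on all of $X$ once you recover $a_0$ as $\max\bigl(M\cap(-\infty,0]\bigr)$ rather than $\max\bigl(M\cap(-\infty,0)\bigr)$; (ii) for Borel measurability of $\Phi$ it is cleanest to observe that $\Phi=\psi_g$ for the orbital Lipschitz function $g(\mathbf{w}):=\phi_{\mathbf{w}}(0)$, which is an explicit Borel function of $s$, $\rho(\omega)$ and $\iota(\omega)$, so that Lemma \ref{measurable} applies verbatim.
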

\noindent
Let $G$ be a second countable locally compact group and fix a right-invariant compatible metric $d$ on $G$ (such a metric exists by \cite{struble1974metrics}). Similarly to the above we may consider $\Lip_{1}(G)$, the (compact) space of $1$-Lipschitz functions from $G$ to the unit interval (w.r.t. $d$), equipped with the compact-open topology. The group $G$ acts on $\Lip_1(G)$ via the right argument shift, $(gf)(h)=f(hg)$.
We present a far reaching generalization of Eberlein's theorem:

\begin{thm}\label{thm:LCSC_Eberlein}
  Let $G$ be a second countable locally compact group.  Let $(X,\mathcal{B}, G)$ be a Borel system. Then there exists an injective $G$-equivariant Borel map $\Phi:X\to\Lip_1(G)$.
\end{thm}
Previously  Jahel and Zucker \cite[Theorem 12]{JZ21} had established  that for a second countable locally compact group $G$, any (free) Borel system $(X,\mathcal{B}, G)$ admits an injective $G$-equivariant Borel map into $(\Lip_1(G))^{\N}$. 

We now ask the question to what extent an analogous statement to Theorem \ref{thm:LCSC_Eberlein} holds for topological dynamical systems. 
Specifically, let $(G,X)$ be a topological dynamical system. One may ask under which conditions $(G,X)$ admits an equivariant embedding into $\Lip_1(G)$. This problem is open already in the case $G=\Z$. Indeed, for $G=\Z$ one has $\Lip_1(\Z) = [0,1]^\Z$, since for any $x=(x_n)_{n\in \Z}\in [0,1]^\Z$ and any $n\neq m$, $|x_n-x_m|\leq 1 \leq |n-m|$. Consequently, in the case $G=\Z$, the question above is at least as difficult as the following classical embedding problem in topological dynamics: to determine under which conditions a topological dynamical system $(X,\Z)$ admits an equivariant embedding into $[0,1]^\Z$ (see, for instance, \cite{LT12,GQS18,gutman2020embedding,dranishnikov2025free}). 
In contrast, the case of $G=\R$ has been solved. In \cite{GJT19} the following  was proven:

\begin{thm}\label{GJT}(\cite[Theorem 1.3]{GJT19}) A topological $\mathbb{R}$-flow $(X,\R)$ embeds into $\Lip_{1}(\mathbb{R})$ if and only if $\Fix(X,\R)$ embeds into the unit interval $[0,1]$.
\end{thm}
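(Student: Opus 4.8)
The ``only if'' direction is immediate. The shift on $\Lip_1(\R)$ fixes exactly the constant functions, and $c\mapsto(\text{constant function }c)$ is a topological embedding of $[0,1]$ onto $\Fix(\Lip_1(\R),\R)$; hence if $\Phi\colon X\to\Lip_1(\R)$ is an equivariant topological embedding, its restriction to $\Fix(X,\R)$ lands among the constants, and composing with the inverse of this homeomorphism gives a topological embedding $\Fix(X,\R)\hookrightarrow[0,1]$.

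For the ``if'' direction I would first recast the target. Every $f\in\Lip_1(\R)$ is $1$-Lipschitz with values in $[0,1]$, so by Arzel\`a--Ascoli $\Lip_1(\R)$ is a compact metric space; thus any continuous equivariant \emph{injection} from the compact space $X$ is automatically an embedding. Moreover, an equivariant map $\Phi\colon X\to\Lip_1(\R)$ is determined by $h(y):=\Phi(y)(0)$ via $\Phi(y)(t)=h(ty)$, and conversely a continuous $h\colon X\to[0,1]$ defines such a $\Phi$ precisely when $t\mapsto h(ty)$ is $1$-Lipschitz for every $y$; this $\Phi$ is injective iff for every $x\neq y$ there is $t$ with $h(tx)\neq h(ty)$. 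So the whole problem reduces to producing a continuous $h\colon X\to[0,1]$ which is \emph{orbit-wise $1$-Lipschitz}, i.e. $|h(tx)-h(sx)|\le|t-s|$, and \emph{orbit-separating} in the sense just stated.

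The elementary mechanism for orbit-wise Lipschitz functions is averaging along the flow: for continuous $\phi\colon X\to[0,1]$ put $h_\phi(y):=\int_0^1\phi(sy)\,ds\in[0,1]$; then $h_\phi$ is continuous and $|h_\phi(tx)-h_\phi(sx)|=\bigl|\int_t^{t+1}\phi(rx)\,dr-\int_s^{s+1}\phi(rx)\,dr\bigr|\le|t-s|$ by a telescoping estimate. This alone is hopeless for injectivity (one $\phi$ never separates orbits, and averaging can even identify two orbits whose $\phi$-traces differ by a mean-zero $1$-periodic function), so the real task is to pack enough test functions, at enough time resolutions, into a single orbit-wise $1$-Lipschitz $h$. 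The plan: extend the given embedding $\iota\colon\Fix(X,\R)\hookrightarrow[0,1]$ to a continuous map on $X$ and build from an averaged, truncated distance to $\Fix(X,\R)$ a global ``clock'' that degenerates near the fixed points exactly at the rate the Lipschitz bound forces; on $X\setminus\Fix(X,\R)$ exploit the local product structure of $\R$-flows (local cross-sections near non-fixed points) to cover the non-fixed part by countably many flow boxes $U_n\cong D_n\times(-\delta_n,\delta_n)$ with a subordinate partition of unity, and on $U_n$ let $h$ carry a full-fidelity encoding of the transverse coordinate in $D_n$ (there is no Lipschitz constraint transverse to the flow) together with a $\delta_n$-small monotone time stamp in the flow direction; finally sum these contributions, each rescaled by its box height so that the total flow-direction oscillation along any orbit stays within the budget $1$, and interleave them with the clock so that the decoding is unambiguous.

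The main obstacle is precisely this simultaneous control: the flow boxes, their heights, the transverse encodings and the partition of unity must be arranged so that (i) along \emph{every} orbit the sum of all flow-direction oscillations is $\le 1$, uniformly in the point, and (ii) $h$ still distinguishes every pair of points --- two points on a common orbit (the time stamps together with the clock must break the time-translation symmetry), two points on distinct periodic orbits of comparable period, and a pair with one point close to $\Fix(X,\R)$, where the clock must take over continuously from the flow-box data and reduce to $\iota$ on $\Fix(X,\R)$ itself. Obtaining these bounds with constant exactly $1$ (rather than some large finite Lipschitz constant) and treating the non-free part --- periodic orbits and the neighbourhood of $\Fix(X,\R)$ --- uniformly is where the work concentrates; granted such an $h$, the induced $\Phi$ is the desired embedding by the reduction above.
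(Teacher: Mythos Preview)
Your proposal is not a proof but a strategy outline, and you yourself flag the gap: the entire construction of $h$ hinges on ``arranging the flow boxes, their heights, the transverse encodings and the partition of unity'' so that the orbit-wise Lipschitz budget is exactly $1$ while full separation is retained, and you do not carry this out. This is not a detail to be filled in later; it is the theorem. In particular, your scheme of summing contributions over countably many flow boxes, each rescaled by its height, runs into the following concrete difficulty: a single orbit can pass through infinitely many of your boxes $U_n$, and the partition-of-unity weights themselves vary along the orbit, so bounding the flow-direction variation of the sum by $1$ uniformly in $x$ requires either a global combinatorial control on how orbits visit the $U_n$ (which you do not have for a general compact flow) or a much more careful local construction than you sketch. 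The treatment of periodic orbits of arbitrarily small period accumulating on $\Fix(X,\R)$ is a second genuine obstacle your ``clock'' does not obviously handle.

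By contrast, the proof in \cite{GJT19} (and the paper's $\R^k$ generalization, Theorem~\ref{RkGJT}) avoids any global construction by a Baire category argument. One works in the complete metric space $C_{\R,h_0}(X;\Lip_1(\R))$ of $\R$-equivariant continuous maps $X\to\Lip_1(\R)$ extending a fixed embedding $h_0\colon\Fix(X,\R)\hookrightarrow[0,1]$, shows it is nonempty (Lemma~\ref{GJTLemma3.1}, via a smooth mollification that produces an element with Lipschitz constant $\le\tfrac12$), and then proves that for each non-fixed $p$ (respectively each pair $p\neq q$ of non-fixed points) there is a closed neighborhood $A$ (respectively $B\times C$) for which the set of maps sending $A$ off the constants (respectively separating $B$ from $C$) is open and dense. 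Density is obtained by a \emph{local} perturbation supported in a single local section around $p$ (or $p,q$), using the slack $1-\tfrac\delta2$ in the Lipschitz constant; the key technical input is Lemma~\ref{main lemma}, which perturbs a $\tau$-Lipschitz function on a cube, keeping it equal on the boundary, so that its restriction to a finite grid encodes enough linear-algebraic independence to distinguish points and detect nonzero shifts. A countable intersection via Baire then yields an injective equivariant map.

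So the philosophies are opposite: you try to build one global $h$ by hand; the paper perturbs a given map locally, one obstruction at a time, and lets Baire category assemble the result. Your averaging observation $h_\phi(y)=\int_0^1\phi(sy)\,ds$ is morally the same as the paper's use of convolution to land inside $\Lip_1$, and your local cross-sections correspond to the local sections used in the density theorems; but the paper never needs your global Lipschitz bookkeeping, which is exactly the step you leave open.
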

\noindent
One should note this is a  refinement of the classical Bebutov--Kakutani theorem \cite{Beb40, Kak68}  which states any topological $\mathbb{R}$-flow whose fixed points embed into the unit interval, may be embedded into the space of continuous functions $C(\mathbb{R}, [0,1])$.\footnote{Jaworski generalized the Bebutov--Kakutani theorem to the setting of connected locally compact group actions, in particular actions by $\Rk$ (\cite[Corollary III.1.1]{Jawo74}).} Going beyond the $\R$-case we now state a theorem for $G=\R^k$: 

\begin{defn}\label{def:LFOF}
 A topological  multidimensional flow $(X,\mathbb{R}^k)$ is said to be \textbf{weakly\footnote{The word \textit{weakly} reminds us that such a flow may have fixed points.} locally free}  if for all $x\in X\setminus\Fix(X,\mathbb{R}^k)$ there exists an open set $\mathbf{0}\in U_x\subset \mathbb{R}^k$ such that for all $\mathbf u\in U_x\setminus \{\mathbf{0}\}$ it holds $\uu x\neq x$. 
\end{defn}
\begin{thm}\label{RkGJT}
   Let $k\in \N$. A topological multidimensional flow $(X,\mathbb{R}^k)$ embeds into $\Lip_{1}(\mathbb{R}^k)$ if $\Fix(X,\R^k)$ embeds into $[0,1]$ and $(X,\mathbb{R}^k)$ is weakly locally free.
 \end{thm}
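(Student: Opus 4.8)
\emph{Proof strategy.} The plan is to reduce the statement to the construction of a single Lipschitz potential on $X$ and then to build that potential by an $\mathbb{R}^k$-version of the argument behind the Bebutov--Kakutani theorem and its one-dimensional Lipschitz refinement. First note that $\Lip_1(\mathbb{R}^k)$, with the compact-open topology, is compact metrizable by Arzel\`a--Ascoli, so only a compact metrizable $X$ can embed and we assume this throughout. An equivariant continuous map $\Phi\colon X\to\Lip_1(\mathbb{R}^k)$ is then exactly the same datum as a continuous $f\colon X\to[0,1]$ all of whose orbit maps $\mathbb{R}^k\ni\mathbf t\mapsto f(\mathbf t x)$ are $1$-Lipschitz, the correspondence being $\Phi(x)(\mathbf t)=f(\mathbf t x)$; equivariance of $\Phi$, and its continuity into the compact-open topology, follow automatically from continuity of the action together with compactness of $X$. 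Moreover $\Phi$ is injective precisely when $f$ \emph{separates orbits}, i.e.\ $f(\mathbf t x)=f(\mathbf t y)$ for all $\mathbf t$ forces $x=y$; and an injective continuous map out of the compact space $X$ is automatically an embedding. So it suffices to produce a continuous, orbit-wise $1$-Lipschitz, orbit-separating function $f\colon X\to[0,1]$.

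Next I would assemble a controlled separating family. Since $X$ is compact metrizable, fix continuous $g_n\colon X\to[0,1]$ ($n\ge1$) separating the points of $X$; as $\Fix(X,\mathbb{R}^k)$ is closed and, by hypothesis, embeds into $[0,1]$, include among the $g_n$ a Tietze extension of such an embedding. To make orbit maps Lipschitz, mollify along the action: $g_n^{\delta}(x)=\leb(B_\delta)^{-1}\int_{B_\delta(\mathbf 0)}g_n(\mathbf s x)\,d\mathbf s$. The elementary estimate that $\mathbf t\mapsto\leb(B_\delta(\mathbf t)\cap E)$ is $C_k\delta^{k-1}$-Lipschitz for every measurable $E$ bounds each orbit map of $g_n^{\delta}$ by a constant $\le C_k'/\delta$, uniformly in $x$, while $g_n^{\delta}\to g_n$ uniformly as $\delta\to0$ by uniform continuity of the action, and mollification changes nothing on $\Fix(X,\mathbb{R}^k)$. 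Hence $\{g_n^{1/m}\}_{n,m\ge1}$ still separates points of $X$, has all orbit maps Lipschitz, and still contains a coordinate embedding $\Fix(X,\mathbb{R}^k)$. Re-enumerate it as $(h_j)_j$, replace $h_j$ by $\epsilon_j h_j$ with $\epsilon_j>0$ small enough that $h_j$ maps into $[0,\epsilon_j]$ with orbit maps $\epsilon_j$-Lipschitz and $\sum_j\epsilon_j\le\tfrac12$, and reserve one coordinate to be a strictly positive constant, so that non-fixed points will be detectable by non-constancy of their orbit maps.

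The heart of the proof, and the only place weak local freeness is used, is to amalgamate the $(h_j)$ into a single orbit-$1$-Lipschitz function $f$ that still separates orbits; this is the $\mathbb{R}^k$-analogue of the amalgamation behind Bebutov--Kakutani and behind its Lipschitz refinement due to Gutman--Jin--Tsukamoto (for $k=1$ the hypothesis is vacuous, every proper closed subgroup of $\mathbb{R}$ being discrete), and I would follow that blueprint. The role of weak local freeness is this: for $x\notin\Fix(X,\mathbb{R}^k)$ the stabilizer of $x$ is a discrete subgroup of $\mathbb{R}^k$, so the orbit map is injective on a ball $B_{\rho(x)}(\mathbf 0)$ of some radius $\rho(x)>0$, and hence below the scale $\rho(x)$ a neighbourhood of $x$ inside its orbit is a \emph{faithful} piece of $\mathbb{R}^k$ --- exactly what is needed to lay information down locally in a one-to-one, Lipschitz, decodable way. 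Because $\rho$ is merely lower semicontinuous and is \emph{not} bounded below (it degenerates to $0$ on approach to $\Fix(X,\mathbb{R}^k)$), the encoding must be carried out at scales allowed to shrink to $0$; one arranges the pieces so that their heights are comparable to their scales and the supports of pieces at distinct scales are essentially disjoint, which --- together with $\sum_j\epsilon_j\le\tfrac12$ and a final global rescaling --- yields the uniform bound $1$ on the full $\mathbb{R}^k$-gradient of all orbit maps. The fixed-point set is handled by the distinguished coordinate: on $\Fix(X,\mathbb{R}^k)$ and a neighbourhood of it, $f$ equals (an interpolation toward) the given embedding $\Fix(X,\mathbb{R}^k)\hookrightarrow[0,1]$, so $\Phi$ is an embedding there and, thanks to the reserved positive-constant coordinate, a point is fixed if and only if its orbit map is constant; for a non-fixed $x$ one decodes every $h_j(x)$ from $\Phi(x)$ and recovers $x$ by the separation property. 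One then checks continuity of $f$, the $1$-Lipschitz bound, and injectivity of $\Phi$, and concludes via the first step.

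I expect the amalgamation to be the main obstacle; the reduction, the mollification, and the passage from injectivity to embedding are routine. The genuinely new difficulties for $k\ge 2$ are: (i) $\mathbb{R}^k$-time carries no canonical linear order along which to sequence the countably many coordinates, so the information must be spread geometrically; (ii) stabilizers, though discrete, may be non-trivial lattices of any rank up to $k$, so orbits recur near points along lattice-patterned time-sets that the decoding must disentangle, and one must ensure that $\Phi(x)$ acquires exactly the period group of $x$ and no larger one; (iii) the injectivity radius $\rho$ is lower semicontinuous but not bounded away from $0$, so the construction must be scale-adaptive and yet remain jointly continuous across $\Fix(X,\mathbb{R}^k)$; and (iv) it is the full gradient of the orbit maps, not merely their directional derivatives, that has to be held below $1$. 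Designing the pieces, their profiles and the decoding map so as to meet all of (i)--(iv) simultaneously is the technical core of the argument.
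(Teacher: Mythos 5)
There is a genuine gap: your text is a strategy outline rather than a proof, and the step you yourself identify as ``the technical core'' --- amalgamating the countable separating family $(h_j)$ into a single orbit-wise $1$-Lipschitz, orbit-separating $f$ --- is never carried out. Everything that is actually written (the identification of equivariant maps $X\to\Lip_1(\mathbb{R}^k)$ with orbit-wise Lipschitz functions $X\to[0,1]$, the mollification along the action, the Tietze extension of the embedding of $\Fix(X,\mathbb{R}^k)$) is correct and corresponds roughly to Lemma \ref{GJTLemma3.1} of the paper, but it only produces \emph{some} element of $C_{\mathbb{R}^k,h_0}(X;\Lip_1(\mathbb{R}^k))$, not an injective one. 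The difficulties you list under (i)--(iv) are precisely where weak local freeness must be converted into usable structure, and the paper does this through a chain you do not supply: weak local freeness yields local sections $(a,S)$ at every non-fixed point (Theorem \ref{thm:local section}, via Hofmann--Mostert); a local section gives a lacunary marker set $H(x)$ on each orbit on which one can splice in perturbations supported on the cube $[0,a]^k$; the perturbations themselves are built in Lemma \ref{main lemma} by prescribing values on a grid $\mathbf{A}$ chosen with linear-independence properties (Lemmas \ref{lem:ap_1}--\ref{lem:Almost sure linear independence}) that make non-constancy and decoding possible, and are extended to genuine $1$-Lipschitz functions by McShane--Whitney and a smoothing theorem. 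None of these mechanisms, nor substitutes for them, appear in your proposal.

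Two further points. First, the paper sidesteps the need for a single explicit global construction by a Baire-category argument: it shows that the sets $G(A)$ and $G(B,C)$ of maps that are ``locally injective enough'' are open and dense in $C_{\mathbb{R}^k,h_0}(X;\Lip_1(\mathbb{R}^k))$, and takes a generic element. Your direct-construction route is not wrong in principle, but it is strictly harder, and as written it is not a proof. Second, your specific device for detecting non-fixed points --- ``reserve one coordinate to be a strictly positive constant, so that non-fixed points will be detectable by non-constancy of their orbit maps'' --- does not work: adding a constant summand to $f$ contributes a constant to every orbit map and cannot make the orbit map of a non-fixed point non-constant. Ensuring that $f(\cdot\,x)$ is non-constant for every $x\notin\Fix(X,\mathbb{R}^k)$ is exactly the content of Theorem \ref{Rkdense1} (via property (3) of Lemma \ref{main lemma}), and it genuinely requires laying down a non-constant pattern near a marker on the orbit of $x$; it cannot be obtained by bookkeeping on the coordinates alone.
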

The multidimensional nature of Theorem \ref{RkGJT}  necessitates the application of both classical and recent results on the extension of Lipschitz functions. In view of the discussion above we leave as an open problem to what extent one may generalize Theorem \ref{RkGJT} to other groups beyond $\Rk$.

Eberlein's theorem  played a pivotal role in Denker and Eberlein's result in \cite{DE74} that every ergodic measurable flow is isomorphic to a strictly ergodic subflow of $(\Lip_{1}(\mathbb{R}),\shift)$. This is an $\R$-version of the celebrated Jewett--Krieger theorem (\cite{Jew69, Kri72})\footnote{An instructive proof following \cite{w85}  is given in \cite[\S15.8]{G03} The strictly ergodic model is achieved as an inverse limit of symbolic subshifts.}. Nowadays the  Jewett--Krieger theorem is known in the generality of free ergodic countable amenable group actions (\cite{w85, Ros85,Wei25}), however it is unknown if it holds in the generality of  free ergodic measurable $\Rk$-flows. This is the subject of a forthcoming work of the authors. 

\begin{rem}\label{notation_Lipfirst}
     Note that the conditions appearing in Theorem \ref{RkGJT}  are sufficient for embedding but not necessary. This is unlike the situation in Theorem \ref{GJT} where a sufficient and necessary condition for embedding is given. In a subsequent independent work, the authors, together with Masaki Tsukamoto, were able to remove the weakly locally free condition and prove that a topological multidimensional flow $(X,\mathbb{R}^k)$ embeds into $\Lip_{1}(\mathbb{R}^k)$ if $\Fix(X,\R^k)$ embeds into $[0,1]$ (\cite[Theorem 1.7]{gutman2025lipschitz}).  While the proof of the more general result ultimately relies on a highly technical perturbation argument—much as in the present paper—the two approaches differ substantially in the methods used to implement the perturbation (cf. Lemma 4.7 in the present work and \cite[Propositions 3.1 \& 3.3]{gutman2025lipschitz}). This distinction may prove advantageous for future developments.
     \end{rem}

\section*{Acknowledgments}
\indent\indent  
This work began while the second-named author visited the Institute of Mathematics of the Polish Academy of Sciences in Warsaw. He would like to thank the institute for its hospitality. Y.G. was partially supported by the National Science Centre (Poland) grant 2020/39/B/ST1/02329. Q.H. was partially supported by the China Scholarship Council CSC - 202206040090; China Postdoctoral Science Foundation 2025M773065; National Key Research and Development Program of China 2024YFA1013600; Fundamental Research Funds for the Central Universities WK0010250102. The authors would like to thank Lei Jin, Konstantin Slutsky, Masaki Tsukamoto and Benjamin Weiss for useful comments and suggestions.

\section{Preliminaries}\label{sec:Preliminaries}
\subsection{General notation and standing conventions}
We use boldface to denote vectors in $\Rk$ and subscripts to denote the coordinates of a vector. 
We use $\|\cdot\|_1,\|\cdot\|_{\infty}$, and $\|\cdot\|_2$ for the $\ell^1,\ell^\infty$ and Euclidean norms in Euclidean spaces respectively.
Given a metric space $(M,d)$, for $x\in M$ and $r>0$, denote by $B_r(x)$ respectively $\overline{B}_r(x)$ the open respectively closed $r$-ball around $x$. Given two sets $A,B\subset M$, write $d(A,B)=\inf_{a\in A,b\in B} d(a,b)$. 
\subsection{Actions}
Let $H$ be a group and $Y$ a set. The unit element of $H$ is denoted by $e$. An \textbf{action} is a map $\Psi:H\times Y\rightarrow Y$, such that $\Psi(e,x)=x$ and $\Psi(gh,x)=\Psi(g,\Psi(h,x))$ for all $x\in Y$, $g,h \in H$. We often denote $\Psi(g,x):= gx$. Similarly  for $A\subset Y$ we denote $\Psi(g,A):=gA$ and for $J\subset H$ we denote $\Psi(J,A):=J\cdot A$. 

\subsection{Borel systems}
 A \textbf{standard Borel space} $(X,\mathcal{B})$
is a measure space isomorphic to a Polish\footnote{A topological space is called \textbf{Polish} if its topology can be metrized by a complete and separable metric.} space  together with its $\sigma$-algebra of Borel subsets.
A \textbf{Borel system} is a triple  $(X,\mathcal{B}, G)$, where $(X,\mathcal{B})$ is a  standard Borel space and $G$ is a second countable locally compact group, equipped with an 
action $\Psi:G\times X\rightarrow X$, where $\Psi$  is a Borel map (\cite[\S 2.1]{BK96}).  A Borel system $(X,\mathcal{B}, G)$ is said to
be \textbf{free} if $gx=x$ for some $x\in X$ implies $g=e$.

Note that a Borel system  is not assumed to be measure-preserving with respect to some measure.

\begin{rem}
    In this article all measure spaces are standard Borel spaces and measurable means Borel measurable. Recall that the composition of Borel maps is Borel. A Borel subspace of a  Borel space is a standard Borel space. 
\end{rem}

\subsection{Measure preserving systems}\label{subsec:Measure preserving systems}

A \textbf{measure preserving system} (m.p.s.) is a  quadruple $(X,\mathcal{X},\mu,G)$, where $(X,\mathcal{X},G)$ is a Borel system, $\mu$ is a probability measure on $(X,\mathcal{X})$ so that $\mu$ is $G$-\textbf{invariant}, i.e. for every $g\in G$ it holds that $\mu(gA)=\mu(A)$ for all $A\in\mathcal{X}$. 
An m.p.s. $(X,\mathcal{X},\mu,G)$ is \textbf{ergodic} if for every $A\in\mathcal{X}$ such that $gA=A$ for all $g\in G$, one has $\mu(A)=0$ or $1$. 
An m.p.s. $(X,\mathcal{X},\mu,G)$ is said to be \textbf{free}
 if $\mu(\Xnf)=0$, where\footnote{By Proposition \ref{prop:X_nf is Borel} $\Xnf$ is a Borel set.} $\Xnf:=\{x\in X: \exists g\in G\setminus\{e\},\, \, gx=x\}$.
A map  $\psi:(X,\mathcal{X},\mu,G)\rightarrow (Y,\mathcal{Y},\nu,G)$ is $G$-\textbf{equivariant} on $X'\subset X$ if for all $x\in X'$ and $g\in G$, $\psi(gx)=g\psi(x)$. A Borel map  $\psi:(X,\mathcal{X},\mu,G)\rightarrow (Y,\mathcal{Y},\nu,G)$ is \textbf{measure-preserving} if for every $C\in \mathcal{Y}$, $\mu(\psi^{-1}(C))=\nu(C)$.
A \textbf{measurable factor map} respectively \textbf{measurable isomorphism} is a measure-preserving  Borel map  $\psi:(X,\mathcal{X},\mu,G)\rightarrow (Y,\mathcal{Y},\nu,G)$  such that there exists $X'\subset X$ $G$-invariant Borel subset with $\mu(X')=1$ and  $Y'\subset Y$ $G$-invariant Borel subset with $\nu(Y')=1$ so that $\psi(X')=Y'$ and $\psi$ is $G$-equivariant on $X'$ respectively $G$-equivariant and invertible on $X'$.
The m.p.s.\ $(Y,\mathcal{Y},\nu,G)$ is said to be a  \textbf{measurable factor} of $(X,\mathcal{X},\mu,G)$ respectively  \textbf{isomorphic (as m.p.s.)} to $(X,\mathcal{X},\mu,G)$.

\subsection{Topological dynamical systems}\label{subsec:Topological dynamical systems}

A \textbf{topological dynamical system} (t.d.s.) is a pair $(X,G)$ equipped with a continuous action $G\times X\rightarrow X$, where $X$ is a compact metrizable space and $G$ is a second countable locally compact group. One calls the and refers to $X$ as a $G$-\textbf{system} or a $G$-\textbf{action}; $\R$-systems are sometimes referred to \textbf{topological flows} or as \textbf{topological $\R$-flows}, whereas $\R^k$-systems ($k\geq 2$) are sometimes referred to as \textbf{topological multidimensional flows} or \textbf{topological $\R^k$-flows}. The action of $g\in G$ on $x\in X$, respectively $A\subset X$ is denoted by $gx$, respectively $gA$. 
When $G=\Z$ and the $\Z$ action is generated by the homeomorphism $T:X\rightarrow X$ (for all $x\in X$, $Tx=1 x$), one traditionally writes $(X,T)$. 
A closed $G$-invariant subset of $X$ is called a \textbf{subsystem}. If $G=\R$, this is sometimes referred to as a \textbf{subflow}. A \textbf{fixed point} of $(X,G)$ is a point $x\in X$ for which $gx=x$ for all $g\in G$. We denote by  $\Fix(X,G)$ the set of fixed points of $(X,G)$.  A $G$-system $X$ is said to
be \textbf{free} if $gx=x$ for some $x\in X$ implies $g=e$. The  set of Borel $G$-invariant probability measures on $X$ is denoted by $\mathcal{P}_{G}(X)$.
A morphism between
two dynamical systems $(X,G)$ and $(Y,G)$ is given by a continuous
mapping $\varphi:X\rightarrow Y$ which is $G$-\textbf{equivariant}
($\varphi(gx)=g\varphi(x)$ for all $x\in X$ and $g\in G$). If $\varphi$
is a surjective morphism, $\varphi$ (and sometimes $X)$ is called
an \textbf{extension} and $Y$ is called a \textbf{factor} of $X$,
if $\varphi$ is an injective morphism, it is called an \textbf{(equivariant) embedding}\footnote{This should not be confused with the purely topological context of a continuous injective map $\psi:X\rightarrow Y$ where $X,Y$ are compact metrizable spaces. We will call such a map a \textbf{topological} embedding.}.
The systems $(X,G)$ and $(Y,G)$ are called \textbf{isomorphic} if
$\varphi$ is bijective. 


\subsection{Topological models}
The terminology of \textit{models} in ergodic theory was introduced by Weiss in \cite{w85}, see also \cite[Chapters 2 \& 15]{G03}.
Let $(X,\mathcal{X},\mu,G)$ be an m.p.s. One says that a t.d.s. $(\hat{X},G)$
is a \textbf{topological model} for $(X,\mathcal{X},\mu,G)$  if there exists  $\hat{\mu}\in \mathcal{P}_{G}(\hat{X})$ so that the m.p.s.\
$(X,\mathcal{X},\mu,G)$ is isomorphic (as m.p.s.) to $(\hat{X},\hat{\mathcal{X}},\hat{\mu},G)$, where $\hat{\mathcal{X}}$ is the Borel $\sigma$-algebra of $\hat{X}$. 

\subsection{Upper semicontinuity}\label{subsec:usc}

\begin{defn}
   Let $X$ be a topological space. A map $f:X\to\R\cup\{\infty\}$ is called  \textbf{upper semi-continuous}  if for every $t\in\R$ the set $\{x\in X: f(x)<t\}$ is open in $X$. 
\end{defn}
It is easy to see upper semi-continuous functions as above are always Borel measurable.

\begin{defn}
 Let $X$ be a compact space and denote by $F(X)$ \textbf{the set of all nonempty closed subsets of $X$}. The \textbf{Vietoris topology} on $F(X)$ is the topology generated by the subbasis consisting of sets of the form
\[
\{C\in F(X): C\cap U\neq \emptyset\},\quad 
\{C\in F(X): C\subset U\},
\]
where $U\subset X$ ranges over open subsets.    
\end{defn}
\begin{defn}
    
Let $X$ be topological spaces and $Y$ a compact space. A  map $f:X\to F(Y)$, is called \textbf{upper semi-continuous} if for every $x\in X$ and every open set $U\subset Y$ with $f(x)\subset U$, there exists a neighborhood $V$ of $x$ in $X$ such that $F(x')\subset U$ for all $x'\in V$.
\end{defn}

\begin{thm}\label{thm:usc_implies_Borel}(\cite[Theorem III.9]{CastaingValadier1977})
    Let $X$ be a topological space and $K$ a compact and metric space. An upper semi-continuous function $f:X\to F(K)$ is Borel measurable.  
\end{thm}

\subsection{Metrics}
\begin{defn}
Let $(X,d)$ be a metric space. The metric $d$ is called \textbf{proper} if every closed ball $\overline{B}_r(x)$ for $x\in X$ and $r>0$ is a compact set.
\end{defn}

\begin{defn}
Let $(G,d)$ be a metrizable topological group (where $d$ is a compatible metric on $G$). The metric $d$ is called \textbf{right-invariant}  if for all $h,h',g\in G$ one has $d(hg,h'g)=d(h,h')$.
\end{defn}
\begin{thm}\label{thm:Srtuble}(\cite{struble1974metrics})\label{thm:Struble}
    A locally compact topological group $G$ is metrizable with a proper right-invariant metric iff $G$ is second countable.
\end{thm}

\subsection{Spaces of Lipschitz functions}\label{subsec:Lipschitz functions}
\begin{defn}\label{Lip1}
Let $\tau>0$. A function $f:\mathbb{R}^k\to \R$ is called \textbf{$\tau$-Lipschitz} if 
\begin{equation*}
|f(\ss)-f(\mathbf r)|\leq \tau \|\ss-\mathbf r\|_2,~~\forall~\mathbf{s,r}\in\mathbb{R}^k.
\end{equation*}
Let $\Lip_{1}(\mathbb{R}^k)$
be the space of functions $f:\mathbb{R}^k\to[0,1]$ 
which are $1$-Lipschitz.
We equip it with the following metric, which is compatible with the compact-open topology,
\begin{equation*}
\|f_1-f_2\|_{\Lip_{1}(\mathbb{R}^k)}=\sum\limits_{M=1}^{\infty}2^{-M}\max\limits_{\mathbf s\in[-M,M]^k}|f_1(\ss)-f_2(\ss)|,~~\text{for}~f_1,f_2\in\Lip_{1}(\mathbb{R}^k).
\end{equation*}
\end{defn}

By the Arzel\`{a}--Ascoli theorem $\Lip_{1}(\mathbb{R}^k)$ is compact with respect to this distance. 

More generally let $G$ be a second countable locally compact group and fix a right-invariant compatible metric $d$ on $G$ (see Theorem \ref{thm:Struble}). A function $f:G\to \R$ is called \textbf{$\tau$-Lipschitz} (w.r.t.\ $d$) if 
\begin{equation*}
|f(g)-f(h)|\leq \tau d(g,h),~~\forall~g,h\in G.
\end{equation*}
Let $\Lip_{1}(G)$, the (compact) space of $1$-Lipschitz functions from $G$ to the unit interval (w.r.t. $d$), equipped with the compact-open topology. The group $G$ acts continuously on $\Lip_1(G)$ via the right argument shift (see e.g. \cite[Lemma 1.10]{hjorth1999sharper}), 
$$(gf)(h)=f(hg),~~\forall~g,h\in G,\, f\in \Lip_1(G).$$

\begin{defn}\label{def:Lip(cube)}
    
   Fix $a>0$ and $\tau>0$. Define $\Lip_{\tau}([0,a]^k)$ (respectively $\Lip_{\tau}([0,a]^k,\mathcal{I})$ for some $\mathcal{I}\subset \R$) as the space of maps $\varphi:[0,a]^k\to[0,1]$ (respectively $\varphi:[0,a]^k\to \mathcal{I}$) such that
   \begin{equation*}
       |\varphi(\ss)-\varphi(\mathbf r)|\leq\tau\|\mathbf{s-r}\|_2,~~\forall~\mathbf{s,r}\in[0,a]^k,
   \end{equation*}
   endowed with the distance $\|\varphi-\psi\|_{\infty}=\max\limits_{\mathbf t\in[0,a]^k}|\varphi(\tt)-\psi(\tt)|$.
\end{defn}

\begin{defn}\label{def:orbital_Lipschitz}
Let $(X,\mathcal{B}, G)$ be a Borel system
 where $G$ is a second countable locally compact group.
For a Borel measurable function $\phi:X\to\R$ and a point $x\in X$, define $\phi^{x}:G\to\mathbb{R}$ by
\begin{equation*}\label{notation g^w}
\phi^{x}(g)=\phi(gx).
\end{equation*}
Denote by $\OrbLip_{1}(X,G)=\OrbLip_{1}(X)$ the space of \textbf{orbital Lipschitz functions}, that is Borel measurable functions $\phi:X\to\mathbb{R}$ such that $\phi^{x}\in \Lip_{1}(G)$ for each $x\in X$.\footnote{Orbital Lipschitz functions  are constructed in the proof of Theorem \ref{thm:LCSC_Eberlein}.}
\end{defn}

\begin{lem}\label{measurable}
For each $\phi\in \OrbLip_{1}(X)$, the mapping
\begin{equation}\label{psi}
\psi_{\phi}:x\mapsto \phi^{x}
\end{equation}
determines a (Borel) measurable mapping from $X$ into $\Lip_{1}(G)$. Moreover, $\psi_{\phi}$ is $G$-equivariant in the sense that for each $x\in X$ and $g\in G$:
\begin{equation*}
\psi_{\phi}(gx)=g \psi_{\phi}(x).
\end{equation*}
\end{lem}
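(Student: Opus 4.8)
The plan is to verify the two assertions separately, starting with measurability and then equivariance, the latter being essentially a formal computation.

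First I would establish measurability of $\psi_g\colon \Omega \to \Lip_1(\mathbb{R}^k)$. Since $\Lip_1(\mathbb{R}^k)$ is a compact metric space (hence Polish and second countable), it suffices to show that for each pair $(\mathbf s, q)$ with $\mathbf s \in \mathbb{R}^k$ and $q \in [0,1]$ the set $\{p \in \Omega : g^p(\mathbf s) < q\}$ — or more robustly, to show that $p \mapsto g^p(\mathbf s)$ is Borel for each fixed $\mathbf s$, and then to invoke a standard criterion. Concretely, for fixed $\mathbf s$ the map $p \mapsto g^p(\mathbf s) = g(\mathbf s p)$ is the composition of the Borel map $p \mapsto \mathbf s p$ (Borel because the action $\Psi\colon \mathbb{R}^k \times \Omega \to \Omega$ is Borel, so its restriction to $\{\mathbf s\}\times \Omega$ is Borel) with the Borel function $g$; hence it is Borel. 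Next, because every element of $\Lip_1(\mathbb{R}^k)$ is $1$-Lipschitz, for any $g^p$ and any $\mathbf s$ one has $g^p(\mathbf s) = \lim_{j} g^p(\mathbf s_j)$ along any sequence $\mathbf s_j \to \mathbf s$ with $\mathbf s_j$ rational, and in fact $\sup_{\mathbf s \in [-M,M]^k}|g^p(\mathbf s) - g^{p'}(\mathbf s)| = \sup_{\mathbf s \in \mathbb{Q}^k \cap [-M,M]^k}|g^p(\mathbf s) - g^{p'}(\mathbf s)|$ by density and equicontinuity. Therefore the metric $\|\psi_g(p) - \psi_g(p')\|_{\Lip_1(\mathbb{R}^k)}$ from \eqref{eq:Lip_1 dist} is a countable supremum (over $\mathbf s \in \mathbb{Q}^k$), combined via the convergent series over $M$, of Borel functions of the pair $(p,p')$; fixing $p'$ in a countable dense subset of $\Omega$ (which exists since $\Omega$ is standard Borel, hence has a Polish realization) shows that $p \mapsto \psi_g(p)$ is Borel into $\Lip_1(\mathbb{R}^k)$, because the preimage of every open ball (equivalently, of every basic open set) is Borel. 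A clean way to phrase the last step: a map into a second-countable metric space is Borel iff the preimage of each open ball $B(h_0, r)$ is Borel, and $\psi_g^{-1}(B(h_0,r)) = \{p : \|\psi_g(p) - h_0\|_{\Lip_1(\mathbb{R}^k)} < r\}$, where $\|\psi_g(p) - h_0\|_{\Lip_1(\mathbb{R}^k)} = \sum_M 2^{-M}\sup_{\mathbf s \in \mathbb{Q}^k \cap [-M,M]^k}|g(\mathbf s p) - h_0(\mathbf s)|$ is a Borel function of $p$.

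Second, for equivariance, fix $p \in \Omega$ and $\mathbf r \in \mathbb{R}^k$. For every $\mathbf t \in \mathbb{R}^k$ one computes, using the definition $g^p(\mathbf t) = g(\mathbf t p)$ and the action identity $(\mathbf t + \mathbf r)p = \mathbf t(\mathbf r p)$,
\begin{equation*}
\psi_g(\mathbf r p)(\mathbf t) = g^{\mathbf r p}(\mathbf t) = g(\mathbf t (\mathbf r p)) = g((\mathbf t + \mathbf r)p) = g^p(\mathbf t + \mathbf r) = \big(\mathbf r \cdot g^p\big)(\mathbf t) = \big(\mathbf r\, \psi_g(p)\big)(\mathbf t),
\end{equation*}
where the penultimate equality is the definition of the shift $\mathbb{R}^k$-flow on $\Lip_1(\mathbb{R}^k)$. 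Since this holds for all $\mathbf t$, we conclude $\psi_g(\mathbf r p) = \mathbf r\, \psi_g(p)$.

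I expect the main obstacle to be the measurability argument, specifically pinning down the reduction of the compact-open metric to a countable operation: one must justify replacing the maxima over $[-M,M]^k$ in \eqref{eq:Lip_1 dist} by suprema over countable dense subsets, which relies essentially on the uniform equicontinuity built into $\Lip_1(\mathbb{R}^k)$, and one must make sure the target is genuinely second countable so that the "preimages of open balls are Borel" criterion applies — both points are standard but should be stated. The equivariance part is purely algebraic and presents no difficulty.
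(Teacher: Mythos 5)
Your proof is correct, and the core reduction is the same as the paper's: both arguments come down to the fact that $p\mapsto g(\mathbf t p)$ is Borel for each fixed $\mathbf t$ (as a composition of the Borel maps $p\mapsto \mathbf t p$ and $g$), plus the observation that evaluations at the countable set $\mathbb{Q}^k$ suffice. Where you differ is in the ``glue'': the paper invokes the Stone--Weierstrass theorem to show that $\{\pi_{\mathbf t}:\mathbf t\in\mathbb{Q}^k\}$ generates the Borel $\sigma$-algebra of $\Lip_1(\mathbb{R}^k)$, so that coordinate-wise measurability immediately gives measurability of $\psi_g$; you instead compute the metric \eqref{eq:Lip_1 dist} explicitly, replacing each $\max_{[-M,M]^k}$ by a supremum over $\mathbb{Q}^k\cap[-M,M]^k$ (justified by the $1$-Lipschitz equicontinuity), and conclude via the ball-preimage criterion in a second-countable metric space. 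Your route is slightly more hands-on and avoids Stone--Weierstrass entirely; the paper's is shorter once one accepts that a countable separating family of continuous functions generates the Borel $\sigma$-algebra. One small remark: the intermediate sentence about ``fixing $p'$ in a countable dense subset of $\Omega$'' is a detour and not quite what is needed (the relevant countable dense set lives in $\Lip_1(\mathbb{R}^k)$, not in $\Omega$), but your final clean formulation with balls $B(h_0,r)$ is correct and makes that sentence superfluous. Your equivariance computation is the standard one and is in fact stated more carefully than the paper's, which contains a typographical slip ($\rr$ applied to the scalar $g(\tt p)$).
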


\begin{proof}
Let $\phi\in \OrbLip_{1}(X)$. For each $h\in G$, consider the evaluation map
\[
\pi_h:\Lip_1(G)\to \mathbb{R}, \qquad \pi_h(f)=f(h).
\]
Each $\pi_h$ is continuous. Let $H\subset G$ be a countable dense subset (which exists since $G$ is second countable).
Define the map
\[
\Phi:\Lip_1(G)\to \mathbb{R}^H, \qquad \Phi(f) = (f(h))_{h\in H}.
\]
Equipping $\mathbb{R}^H$ with the product topology, $\Phi$ is continuous. Moreover,  $\Phi$ is injective as if $f,g\in \Lip_1(G)$ satisfy $f(h)=g(h)$ for all $h\in H$, then by continuity and density of $H$ in $G$, we obtain $f=g$ on $G$. Thus $\Phi$ is an injective Borel map from a standard Borel space into a  standard Borel space. By Souslin's theorem (\cite[Theorem 2.8(2)]{G03}), $\Phi$ is a Borel isomorphism onto its image. The Borel $\sigma$-algebra on $\mathbb{R}^H$ is generated by the coordinate projections, hence
\[
\mathcal{B}(\Lip_1(G)) = \sigma(\pi_h : h\in H).
\]

Therefore, to prove Borel measurability of $\psi_\phi$, it suffices to check that for each fixed $h\in H$, the map
\[
x \mapsto \pi_h(\phi^x)
\]
is Borel. As
\[
\pi_h(\phi^x) = \phi^x(h) = \phi(hx),
\]
and since the action map $x\mapsto hx$ is Borel and $\phi$ is Borel, it follows that $x\mapsto \phi(hx)$ is Borel.
Finally, for any $x\in X$ and $g,h\in G$, we compute
\[
\psi_\phi(gx)(h)
= \phi^{gx}(h)
= \phi(hgx)
= \phi^x(hg)
= \psi_\phi(x)(hg)
= (g\psi_\phi(x))(h),
\]
which shows that $\psi_\phi$ is $G$-equivariant.
\end{proof}

\subsection{Local sections and complete cross-sections}

In this subsection we introduce various types of \textit{sections} both in topological and measurable categories. 

\begin{defn}\label{def:local section}
    Let $(X,\mathbb{R}^k)$ be a topological $\mathbb{R}^k$-flow and $p\in X$. A \textbf{local section at $p$} is a pair consisting of a  closed set $S\subset X$ containing $p$ and a real number $a>0$  such that the map defined by
    \begin{equation}\label{section}
        [-a,a]^k\times S\to X,~~(\tt,x)\mapsto \mathbf t x
    \end{equation}
    is continuous and injective, and its image contains an open neighborhood of $p$ in $X$. 
\end{defn}

\begin{thm}\label{thm:local section}
    Let $(X,\mathbb{R}^k)$ be a weakly locally free topological multidimensional flow as defined in Definition \ref{def:LFOF}. Then for any $p\in X\setminus\Fix(X,\mathbb{R}^k)$, there exists a local section at $p$.
\end{thm}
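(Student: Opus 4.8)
\textbf{Proof proposal for Theorem \ref{thm:local section}.}

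The plan is to build the local section by first passing to a convenient ``flow box'' picture near $p$. Fix $p\in X\setminus\Fix(X,\mathbb{R}^k)$. By weak local freeness there is an open neighborhood $\mathbf 0\in U_p\subset\mathbb{R}^k$ with $\mathbf u p\neq p$ for all $\mathbf u\in U_p\setminus\{\mathbf 0\}$; shrinking $U_p$ we may assume $U_p\supset[-3a_0,3a_0]^k$ for some $a_0>0$. The key preliminary observation is that the orbit map $\mathbf t\mapsto \mathbf t p$ is injective on $[-a_0,a_0]^k$: if $\mathbf t p=\mathbf t' p$ with $\mathbf t,\mathbf t'\in[-a_0,a_0]^k$, then $(\mathbf t-\mathbf t')p=p$ and $\mathbf t-\mathbf t'\in[-2a_0,2a_0]^k\subset U_p$, forcing $\mathbf t=\mathbf t'$. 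By compactness of $[-a_0,a_0]^k$ and joint continuity of the action, the map $(\mathbf t,\mathbf x)\mapsto \mathbf t\mathbf x$ is therefore a homeomorphism from $[-a_0,a_0]^k\times\{p\}$ onto its image, and this homeomorphism property persists on a neighborhood. More precisely, I want a closed set $S\ni p$ transverse to the orbit direction; the natural candidate is a ``slice'' obtained by intersecting a small ball around $p$ with the level set of a continuous function that decreases to zero along the orbit.

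The main construction: define $\varphi:X\to[0,\infty)$ by $\varphi(x)=d(x,\orb(p)\cap \overline{B(p,\delta)})$ or, more robustly, choose a continuous ``time coordinate'' near $p$. Concretely I would use a partition-of-unity / Urysohn-type function: pick $\delta>0$ small and let $N$ be an open neighborhood of $p$ whose closure is covered by the injectivity region above, then set $S=\{x\in\overline{N}: \text{the unique } \mathbf t(x)\in[-a_0,a_0]^k \text{ with } \mathbf t(x)\cdot p \text{ ``closest'' to } x \text{ satisfies...}\}$ — but this is circular. The cleaner route, which I expect to be the actual argument, is: choose a small compact neighborhood $K$ of $p$ on which the map $E:[-a_0,a_0]^k\times (K\cap \text{transversal})\to X$ is studied via the inverse function / domain-invariance heuristic. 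Since there is no smooth structure, I would instead invoke the standard topological-dynamics lemma: for a flow without small periods at $p$, the map $[-a_0,a_0]^k\times X \supset [-a_0,a_0]^k\times V \to X$ restricted appropriately is a local homeomorphism onto a neighborhood of $p$ — proved by a compactness argument showing injectivity survives, then applying invariance of domain ($\mathbb{R}^k\times S$ locally looks like $\mathbb{R}^{k+\dim}$... ) — however $X$ need not be a manifold, so invariance of domain is unavailable.

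Therefore the robust approach is via \emph{local cross-sections as in the Whitney/Bebutov theory}: Let $f\in C(X,[0,1])$ with $f(p)=1$ and $f\equiv 0$ outside a small neighborhood of $p$ chosen inside the injectivity region. Define $F:X\to\mathbb{R}$ by averaging $f$ against a bump in the $\mathbb{R}^k$-directions, $F(x)=\int_{[-a_0,a_0]^k}\rho(\mathbf s)\, f(\mathbf s x)\,d\mathbf s$ for a suitable smooth $\rho\geq 0$; then $F$ is continuous, $\mathbf t\mapsto F(\mathbf t p)$ is $C^\infty$ near $\mathbf 0$, and by choosing $\rho$ (e.g.\ a product of shifted bumps) one arranges that the gradient of $\mathbf t\mapsto F(\mathbf t p)$ at $\mathbf 0$ is any prescribed nonzero vector, or better, one builds $k$ such functions $F_1,\dots,F_k$ whose orbital gradients at $p$ form a basis of $\mathbb{R}^k$. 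Set $S=\{x\in \overline{N}: F_i(x)=F_i(p),\ i=1,\dots,k\}$, a closed set containing $p$. Then $(\mathbf t,x)\mapsto \mathbf t x$ on $[-a,a]^k\times S$ for small $a$ is continuous by definition; injective because the ``time map'' $x\mapsto (F_i(\mathbf t x))_i$ separates orbit segments transverse to $S$ (here I use that the Jacobian in $\mathbf t$ is invertible at $p$, hence by continuity and a compactness/degree argument nearby, giving injectivity on a small enough box); and the image contains a neighborhood of $p$ because the map $(\mathbf t,x)\mapsto(\mathbf t x, (F_i)_i)$ hits a full neighborhood, again by the nonvanishing-Jacobian argument applied in the continuous (non-smooth) category via a topological fixed-point or degree argument.

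\emph{Main obstacle.} The hard part is precisely the ``image contains an open neighborhood of $p$'' clause together with injectivity on a genuine box $[-a,a]^k\times S$, since $X$ carries no differentiable or even manifold structure, so neither the inverse function theorem nor invariance of domain applies directly. I expect to handle this by: (i) using the averaged functions $F_1,\dots,F_k$ to get a continuous map $G=(F_1,\dots,F_k):X\to\mathbb{R}^k$ whose restriction to the orbit, $\mathbf t\mapsto G(\mathbf t p)$, is a $C^1$-diffeomorphism from a neighborhood of $\mathbf 0$ in $\mathbb{R}^k$ onto a neighborhood of $G(p)$ (genuine smoothness in $\mathbf t$ comes for free from differentiating under the integral sign); (ii) defining $S$ as the fiber $G^{-1}(G(p))$ intersected with a small closed neighborhood; (iii) proving that $(\mathbf t,x)\mapsto \mathbf t x$ from $[-a,a]^k\times S$ is a homeomorphism onto a neighborhood of $p$ by writing its ``inverse'' explicitly: $y\mapsto(\sigma(y), (-\sigma(y))y)$ where $\sigma(y)\in[-a,a]^k$ is the unique solution of $G(\mathbf s y)=G(p)$ — existence and uniqueness of $\sigma(y)$, and its continuity, follow from the fact that $\mathbf s\mapsto G(\mathbf s y)$ is $C^1$ with derivative close to that of $\mathbf s\mapsto G(\mathbf s p)$ uniformly for $y$ near $p$ (continuity of the action and of $D$), hence a uniform quantitative inverse function statement applies. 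This reduces the whole theorem to a single quantitative, uniform-in-parameter inverse function argument for the smooth maps $\mathbf s\mapsto G(\mathbf s y)$, which is routine once set up, and then injectivity, continuity of $\sigma$, and openness of the image all drop out. I would also double-check that shrinking neighborhoods keeps $p$ in the interior of the image, which is where weak local freeness (no small nonzero return times at $p$) is essential to guarantee $\mathbf s\mapsto G(\mathbf s p)$ is actually injective and not merely locally so.
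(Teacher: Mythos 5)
Your final strategy (the averaged transverse coordinates $F_1,\dots,F_k$ plus a uniform quantitative inverse function theorem along orbits) is sound, but it is a genuinely different route from the paper's. The paper does not construct the section by hand: it reduces Theorem \ref{thm:local section} to Hofmann--Mostert's general theorem on the existence of local cross-sections for Lie group actions on completely regular spaces (\cite[Appendix II, Theorem 1.11]{HM66}), the only work being to check that weak local freeness forces the relevant ``local isotropy group'' $G_y^{\circ}$ to be trivial for every non-fixed $y$, so that the factor $K'$ in their cross-section collapses to $\{\mathbf 0\}$ and their $K$ is a genuine neighborhood of $\mathbf 0$ in $\mathbb{R}^k$. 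Your approach buys a self-contained, elementary proof (no external structure theory of local transformation groups); the paper's buys brevity at the cost of importing a nontrivial 1966 result. Your plan also correctly identifies that invariance of domain is unavailable and that the inverse function theorem must be applied to the smooth orbit maps $\mathbf s\mapsto G(\mathbf s y)$ rather than on $X$ itself, and that the implicit ``time map'' $\sigma$ furnishes the inverse; this is exactly the right way to get injectivity on the whole box $[-a,a]^k\times S$ and openness of the image simultaneously. Note, incidentally, that your scheme proves more than weak local freeness gives a priori: injectivity of $\mathbf s\mapsto G(\mathbf s y)$ on $[-2a,2a]^k$ for all $y$ near $p$ yields a \emph{uniform} absence of small periods near $p$, which is not part of the hypothesis.

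Two points in your sketch need to be firmed up before it is a proof. First, the existence of $\rho_1,\dots,\rho_k$ making the orbital Jacobian at $p$ invertible is asserted but not argued: the image of the linear map $\rho\mapsto -\int\nabla\rho(\mathbf s)\,f(\mathbf s p)\,d\mathbf s$ is all of $\mathbb{R}^k$ precisely because a direction $\mathbf c$ annihilating it would force the distributional derivative $D_{\mathbf c}\bigl(f(\cdot\,p)\bigr)$ to vanish on the box, i.e.\ $\mathbf s\mapsto f(\mathbf s p)$ constant along $\mathbf c$-lines, contradicting $f(p)=1$ together with vanishing near the boundary of the box. For this you must choose $\supp(f)$ small enough that $f(\mathbf s p)=0$ for $\mathbf s$ near $\partial[-a_0,a_0]^k$ (possible because, by weak local freeness, $\mathbf s\mapsto\mathbf s p$ is injective, hence a homeomorphism, on the compact box, so the image of the boundary collar is a compact set avoiding $p$); without this the ``constant in a direction'' contradiction fails and the integration by parts picks up boundary terms. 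Second, your opening claim that injectivity of the orbit map at $p$ ``persists on a neighborhood'' is not justified by compactness alone (weak local freeness gives no uniformity in the base point); fortunately this claim is not load-bearing, since your step (iii) rederives the needed injectivity from the uniform inverse function estimate. With these two points supplied, the argument goes through.
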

The proof is given in Appendix A.

\begin{defn}\label{def:Borel cross-section}(\cite{Kechris1992})
  Let $(X,\mathcal{B}, G)$ be a Borel system where $G$ is a second countable locally compact group. A Borel subset $Z\subset X$ is called a  $G$-\textbf{complete cross-section} if it intersects every $G$-orbit in a countable non-empty set.  If in addition  there exists an open neighborhood $e\in U\subset G$ so that for all $z\in Z$,
  $Uz\cap Z=\{z\}$ then $Z$ is called a \textbf{complete \mbox{($U$-)lacunary} $G$-cross-section}.
\end{defn}

Notice the following theorem does not require a freeness assumption.
\begin{thm}\label{thm:lacunary cross-section} (Kechris \cite[Corollary 1.2]{Kechris1992})
    Let $(X,\mathcal{B}, G)$ be a Borel system where $G$ is a second countable locally compact group. Then it admits  a complete lacunary $G$-cross-section. 
\end{thm}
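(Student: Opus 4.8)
The plan is to reduce, via the structure theorem already quoted, to a continuous action on a compact metric space, to identify the desired cross-section as a maximal uniformly separated transversal net on each orbit, and then to make that selection Borel by stratifying $X$ by the stabiliser. Concretely, by \cite[Theorem 3.2]{V63} there is a t.d.s. $(Y,\mathbb R^k)$ and an $\mathbb R^k$-invariant Borel set $X'\subseteq Y$ with $(X',\mathcal B(Y)_{|X'},\mathbb R^k)$ isomorphic to $(X,\mathcal B,\mathbb R^k)$. If $\widetilde Z\subseteq Y$ is a complete lacunary $\mathbb R^k$-cross-section of $Y$ with separating neighborhood $U$, then $\widetilde Z\cap X'$ is Borel in $X'$, meets every orbit contained in $X'$ (such an orbit being an orbit of $Y$) in a countable nonempty set, and $Uz\cap(\widetilde Z\cap X')\subseteq Uz\cap\widetilde Z=\{z\}$; transporting this back through the isomorphism we may assume $X$ is compact metrizable and the $\mathbb R^k$-action is jointly continuous. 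Fix $\delta>0$, set $Q:=(-\delta,\delta)^k$, and for $z,z'$ on a common orbit write $[z\to z']:=\{\mathbf g\in\mathbb R^k:\mathbf g z=z'\}$, a coset of the stabiliser; since $Q=-Q$, the relation ``$[z\to z']\cap Q\neq\emptyset$'' is symmetric in $z,z'$. Call $N\subseteq\orb(x)$ \emph{$Q$-separated} if $[z\to z']\cap Q=\emptyset$ for all distinct $z,z'\in N$. Zorn's lemma gives, one per orbit, a maximal $Q$-separated subset $N_x$, and $Z_0:=\bigcup_x N_x$ then has the required properties set-theoretically: it is $Q$-discrete (if $z,\mathbf g z\in Z_0$ with $\mathbf g\in Q$, the two points share an orbit and $\mathbf g\in[z\to\mathbf g z]\cap Q$, forcing $\mathbf g z=z$), it meets every orbit by maximality, and it meets each orbit countably, since a $Q$-separated subset of $\orb(x)\cong\mathbb R^k/G_x$ is uniformly $\delta$-discrete in the quotient metric, hence locally finite, hence countable by $\sigma$-compactness of $\mathbb R^k/G_x$. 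So the theorem reduces to choosing the $N_x$ so that $Z_0$ is Borel; and here maximality is inessential --- any Borel $Q$-separated set meeting every orbit will do.

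For the Borelisation I would stratify $X$ by stabiliser type. The map $x\mapsto G_x$ is Borel into the Effros Borel space of closed subgroups of $\mathbb R^k$, so the sets $X^{(j)}:=\{x:\dim G_x^{\circ}=j\}$, $0\le j\le k$, are $\mathbb R^k$-invariant and Borel, with $X^{(k)}=\Fix(X,\mathbb R^k)$. On $\Fix(X,\mathbb R^k)$ each orbit is a singleton, so this closed set may be put into the cross-section outright (the lacunarity requirement is then vacuous there). The set $X^{(0)}$ of points with discrete stabiliser is open (a limit of points with positive-dimensional stabiliser again has positive-dimensional stabiliser, by joint continuity of the action), it is fixed-point-free, and it is weakly locally free in the sense of Definition \ref{def:LFOF}; since the construction in Theorem \ref{thm:local section} is local, it applies to the locally compact Polish flow $X^{(0)}$ and supplies a local section at each of its points. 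A Lindel\"of argument then produces countably many local sections $S_1,S_2,\dots$ in $X^{(0)}$ whose saturations cover $X^{(0)}$, so that $Y:=\bigcup_i S_i$ is a Borel complete cross-section of $X^{(0)}$ meeting each orbit in a locally finite subset of it (a finite union of sets separated along the orbit). Hence the orbit equivalence relation restricted to $Y$ is a countable Borel equivalence relation, and a standard marker argument for such relations yields a Borel $Z^{(0)}\subseteq Y$ meeting every orbit in a maximal $Q$-separated subset of $Y$ within that orbit --- in particular in a nonempty countable set, and $Q$-discretely across distinct orbits. Finally, on $X^{(j)}$ with $1\le j\le k-1$ the degeneracy is removed by quotienting out the stabiliser directions: $x\mapsto V_x:=G_x^{\circ}$ and $x\mapsto W_x:=V_x^{\perp}$ are Borel and orbit-invariant, $\orb(x)=W_x x$, and the induced action of $W_x\cong\mathbb R^{k-j}$ on $\orb(x)$ has discrete stabiliser, so the task on $X^{(j)}$ is a fibrewise (parametrized) instance of the $X^{(0)}$-case in dimension $k-j<k$, handled by induction on $k$. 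The union of the cross-sections over $\Fix(X,\mathbb R^k)$, $X^{(0)}$ and the $X^{(j)}$, after intersecting the finitely many separating neighborhoods to a common one, is the desired complete lacunary $\mathbb R^k$-cross-section; this is in essence the argument of \cite{Kechris1992}.

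I expect the main obstacle to be the presence of non-trivial stabilisers. A point whose stabiliser has positive dimension admits \emph{no} local section --- the stabiliser directions are collapsed by the orbit map --- so those strata cannot be treated directly but must be reduced, through the Borel field of stabiliser identity-components, to a genuinely lower-dimensional problem; only the discrete-stabiliser stratum is weakly locally free, so that Theorem \ref{thm:local section} and the countable-Borel-equivalence-relation machinery are available there. Organising this stratification and the passage to quotients, while at each stage thinning the non-uniform orbit-spacing produced by the local sections --- Borel-measurably and with a single separating neighborhood --- down to a genuinely lacunary cross-section, is where the real work lies.
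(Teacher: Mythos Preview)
The paper does not prove this statement: Theorem \ref{thm:lacunary cross-section} is quoted from \cite[Corollary 1.2]{Kechris1992} and used as a black box, so there is no in-paper argument to compare against.

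Assessing your sketch on its own terms: the fixed-point stratum and the discrete-stabiliser stratum $X^{(0)}$ are handled correctly in outline. Your parenthetical ``a finite union of sets separated along the orbit'' is wrong --- the union of the $S_i$ is countable, not finite --- but the conclusion survives, since each $S_i$ meets each orbit in a uniformly discrete set, so $Y$ meets each orbit countably, the induced orbit relation on $Y$ is a countable Borel equivalence relation, and a Borel maximal $Q$-independent subset exists by the standard locally-countable-Borel-graph argument. The genuine gap is the intermediate strata $X^{(j)}$, $1\le j\le k-1$. You assert this is ``a fibrewise (parametrized) instance of the $X^{(0)}$-case in dimension $k-j$, handled by induction on $k$'', but no induction hypothesis applies as stated: there is no single $\mathbb{R}^{k-j}$-action on $X^{(j)}$, since $W_x=(G_x^\circ)^\perp$ varies with the orbit. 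This can be repaired by partitioning $X^{(j)}$ into finitely many $\mathbb{R}^k$-invariant Borel pieces on which $G_x^\circ$ lies in a fixed Grassmannian chart, so that one fixed coordinate subspace $W_I\cong\mathbb{R}^{k-j}$ is transversal to every $G_x^\circ$ there and the $W_I$-orbits coincide with the $\mathbb{R}^k$-orbits with discrete $W_I$-stabilisers --- but the $W_I$-action on such a piece is then only Borel (the piece is not open in your compact model), so you must re-invoke the topological realisation before local sections are available again. You flagged this stratum as ``where the real work lies''; the sketch does not do that work. Also, your closing claim that this is ``in essence the argument of \cite{Kechris1992}'' is inaccurate: Kechris's proof is a uniform descriptive-set-theoretic selection argument for locally compact group actions and does not pass through local sections or a stabiliser stratification.
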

\begin{rem}
 In the context of  non-singular (in particular measure-preserving) actions by second countable locally compact group, an older result by Feldman, Hahn and Moore \cite[Theorem 2.8]{FeldmanHahnMoore1979} had shown the existence of an  \textit{almost-surely complete} lacunary cross-section. For  free $\Rk$-Borel systems Slutsky  (\cite[Theorem 4.3]{Slu23}) generalized Katok's representation theorem (\cite{Kat77}) which allows to find a  complete lacunary $\Rk$-cross-section with additional structure.    
\end{rem}

\section{A generalization of Eberlein's theorem to second countable locally compact Borel systems}

\subsection{Auxiliary lemma}  

We present a lemma that is well known to specialists. Several\footnote{For example, the lemma may be established using the Kuratowski–Ryll-Nardzewski measurable selection theorem.} proofs are available in the literature; for completeness, we include one for the sake of completeness.

\begin{lemma}\label{lem:rho_is_Borel}
Let $G$ be a second countable locally compact group with a proper
right-invariant compatible metric $d$, and let $(X,\mathcal{B}, G)$ be a Borel system. Then the map
\[
\rho:X\times X\to [0,\infty],\qquad 
\rho(x,y)=\inf\{d(g,e): gx=y\},
\]
where the infimum of the empty set is taken to be $\infty$, is a  Borel measurable extended metric. 

\end{lemma}

\begin{proof}

By a theorem of Becker and Kechris \cite[Theorem 5.2.1]{BK96} we may assume w.l.o.g. that $X$ is a Polish space and the action of $G$ on $X$ is continuous.  Fix $r>0$ and define:
$$
R_r = \{(x,y,g)\in X\times X\times \overline{B}_r(e)\, |\, gx=y\}.
$$
Let $\pi_{X\times X}:X\times X\times G\rightarrow X\times X$ be the projection on the first two coordinates and $\pi_G:X\times X\times G\rightarrow G$ be the projection on the  third coordinate. Let $F(\overline{B}_r(e))$ denote the set of non-empty closed sets of  $\overline{B}_r(e)$. Using the fact that $\overline{B}_r(e)$ is compact, it is easy to see that $\pi_{X\times X}(R_r)$ is closed and the map
$$
s_r: \pi_{X\times X}(R_r)\rightarrow F(\overline{B}_r(e)),\, (x,y)\mapsto  \pi_G(R_r\cap \{(x,y)\}\times G) 
$$
is upper semi-continuous and thus by Theorem \ref{thm:usc_implies_Borel}, Borel measurable. Indeed, fix $(x,y)\in\pi_{X\times X}(R_r)$, and let $U\subset\overline B_r(e)$ be open with $s_r(x,y)\subset U$. Set $C:=\overline B_r(e)\setminus U$, which is closed (hence compact). If $s_r$ were not upper semi-continuous at $(x,y)$, there would exist $(x_n,y_n)\to(x,y)$ and $g_n\in s_r(x_n,y_n)\cap C$ for all $n$. By compactness of $C$ pass to a subsequence with $g_{n_k}\to g\in C$. As $R_r$ is closed, conclude $(x,y,g)\in R_r$, so $g\in s_r(x,y)\cap C$, contradicting $s_r(x,y)\subset U$. Hence $s_r$ is upper semi-continuous.
Define the map 
$$
D_r: F(\overline{B}_r(e))\rightarrow [0,\infty],\, C\mapsto \inf_{g\in C} d(g,e)
$$
Note that for every $t\in \R$,
$$
D_r^{-1}((-\infty,t))=\{C\in F(\overline{B}_r(e)): C\cap B_t(e)\neq\emptyset\}.$$
where the RHS is an open set in the Vietoris topology. Conclude that  $D_r$ upper semi-continuous and thus Borel measurable. 

For each $r>0$ let $F_{\star}(\overline{B}_r(e)):=F(\overline{B}_r(e))\cup \{\star\}$ where $\star$ is added as an isolated point. Extend $s_r$ to $\tilde{s}_r:X\times X\rightarrow F_{\star}(\overline{B}_r(e))$ by assigning $\star$ to the values outside $\pi_{X\times X}(R_r)$. Clearly $\tilde{s}_r$ is  Borel measurable. Extend $D_r$ to $\tilde{D}_r:F_{\star}(\overline{B}_r(e))\to [0,\infty]$ by $\tilde{D}_r(\star)=\infty$. Clearly $\tilde{D}_r$ is  Borel measurable. 

Finally we note it holds for all $x,y\in X$,
$$
\rho(x,y)=\inf_{n\in\N} \tilde{D}_n(\tilde{s}_n(x,y)).
$$
Thus $\rho$ being the infimum over a countable collection of Borel functions is Borel.

We now verify the axioms of an extended metric.

Clearly $\rho(x,x)=0$ for all $x\in X$. Let $x,y\in X$ so that $\rho(x,y)=0$. By definition we may find a sequence $g_n\in G$ so that $g_n\rightarrow e$ and $g_n x=y$. By the continuity of the action $x=y$.

To prove symmetry fix $x,y\in X$. Note that for all $g\in G$ $gx=y \Leftrightarrow g^{-1}y=x$ and by the symmetry and right-invariance of $d$ it holds $d(g,e)=d(e,g^{-1})=d(g^{-1},e)$. Thus it follows $\rho(x,y)=\rho(y,x)$.

To prove the triangle inequality
fix $x,y,z\in X$. W.l.o.g. we may assume $\rho(x,y)<\infty$ and $\rho(y,z)<\infty$. Thus we may find $g,h\in G$ with
$gx=y$ and $hy=z$ so that $
d(g,e)\le \rho(x,y)+\epsilon$ and $d(h,e)\le \rho(y,z)+\epsilon$. Note $hgx=hy=z$ and therefore
$$
\rho(x,z)\le d(hg,e) \le d(hg,g)+d(g,e) = d(h,e)+d(g,e) \le \rho(y,z)+\rho(x,y)+2\epsilon.
$$
Taking $\epsilon\rightarrow 0$ yields the desired inequality.

\end{proof}

\subsection{Proof of the theorem}

\begin{proof}[Proof of Theorem \ref{thm:LCSC_Eberlein}]
By Theorem \ref{thm:Struble}  we may fix a proper right-invariant compatible metric  $d$ on $G$. 
    Let $\rho: X\times X\to\R_+\cup\{\infty\}$ be given by
    \begin{equation*}
        \rho(x,y)=\inf\{d(g,e): g\in G,gx=y\},
    \end{equation*}
    where the infimum of the empty set is taken to be $\infty$. By Lemma \ref{lem:rho_is_Borel} $\rho$ is a  Borel extended metric.

    Note that by right-invariance of $d$ and the fact that the element $hg^{-1}$ maps $gx$ to $hx$, it holds: 
    \begin{equation}\label{eq:rho_to_d}
      \rho(gx,hx)\leq d(hg^{-1},e)= d(g,h) ~~~~\forall x\in X,~\forall g,h\in G.
    \end{equation}
From the reverse triangle inequality for $\rho$ it follows (for the case of $x$ and $y$ not belonging to the same orbit, we employ the convention $\infty-\infty=0$):
    \begin{equation}\label{Lipschitz diatance}
        |\rho(gx,y)-\rho(hx,y)|\leq \rho(gx,hx) \stackrel{\eqref{eq:rho_to_d}}{\leq} d(g,h) ~~~~\forall x,y\in X,~\forall g,h\in G.
    \end{equation}
    By Theorem \ref{thm:lacunary cross-section}, there is a complete $B_{4r}(e)$-lacunary Borel cross-section $S\subset X$ for some $r>0$. By rescaling $d$, one may assume w.l.o.g.\footnote{A detailed analysis of the proof given in \cite{Kechris1992} shows one can guarantee the existence of a $B_{4}(e)$-lacunary Borel cross-section  without any rescaling, however the details are beyond the scope of this paper.} $r=1$.  Thus it holds  $G\cdot S=X$ and $(B_{4}(e) s)\cap S=\{s\}$ for all $s\in S$. In particular given two distinct elements $s_1,s_2\in S$, either they are not in the same orbit and then $\rho(s_1,s_2)=\infty$ or they are in the same orbit, that is $s_1\neq s_2$ and $gs_1=s_2$ for some $g\in G$, and then $g\notin B_{4}(e)$ implying 
    \begin{equation}\label{eq:distinct_s}
       \rho(s_1,s_2)\geq 4.
    \end{equation}
     We claim that if $x\in B_1(e)\cdot S$, there exists a unique $s\in S$ such that $x\in B_1(e) s$. We denote this $s$ by $\pi(x)$. Indeed, if there exist distinct $s,s'\in S$ satisfying $x\in (B_1(e) s)\cap(B_1(e) s')$, i.e. $x=gs=g's'$ for some $g,g'\in B_1(e)$, then by the triangle inequality for $\rho$, $\rho(s,s')\leq \rho(x,s)+\rho(x,s')<2$, which contradicts $\rho(s,s')\geq 4$.\\
      \noindent
  \textbf{Claim.} The set $B_1(e)\cdot S$ is a Borel set and the map $\pi:B_1(e)\cdot S\rightarrow S$ is a Borel map.\\
   \noindent
 \textbf{Proof of the Claim.} Consider the Borel set 
  $$\Gamma:=\{(x,s)\in X\times S|\,\rho(x,s)<1\}\subset X\times S.$$
  Note
  \begin{equation}\label{eq:Gamma_char}
  \Gamma=\{(x,\pi(x))|\,x\in B_1(e)\cdot S\}.       \end{equation}
 Indeed if 
 $x\in B_1(e)\cdot S$, then  there exists $s\in S$ and $g\in B_1(e)$ such that $x = gs$ (and necessarily $s=\pi(x)$). Conclude $\rho(x,s) = \rho(gs,s) \stackrel{\eqref{eq:rho_to_d}}{\leq}  d(g,e) < 1$. Conversely, if $\rho(x,s)<1$ for some $(x,s)\in X\times S$, then by definition there exists $g\in B_1(e)$ so that $gx=s$. Conclude $x=g^{-1}s\in B_1(e)\cdot S$ and $s=\pi(x)$. By the Lusin–Novikov theorem (\cite[Theorem 18.10]{Kec95}), as for all $x\in X$, $|\Gamma_x|\leq 1$ where $\Gamma_x:=\{s\in S|\, (x,s)\in \Gamma\}$, $\Gamma$ is a \textit{Borel graph}, which given \eqref{eq:Gamma_char}, is equivalent to the statement of the claim.  \hfill  $\blacksquare$

    By \cite[Theorem 15.6]{Kec95} there is a Borel injection $\alpha: S\to[1,2]$. Define the Borel function $\phi:X\to[0,1]$ by\footnote{The fact that $\phi$ takes values in $[0,1]$ follows from \eqref{eq:Gamma_char}.}
     \begin{equation*}
         \phi(x)=
         \begin{cases}
         \frac{1-\rho(x,\pi(x))}{\alpha(\pi(x))} & \textrm{if }\, x\in B_1(e)\cdot S,\\
         0   &    \textrm{if }\, x\notin B_1(e)\cdot S.
       \end{cases}
     \end{equation*}
      
      \noindent
      \textbf{Claim.} The function $\phi$ is an orbital Lipschitz function in the sense of Definition  \ref{def:orbital_Lipschitz}.\\
      \textbf{Proof of the Claim.} By definition, one has to show for every $x\in X$, $\phi^x\in\Lip_1(G)$ , where $\phi^x:G\to[0,1]$ is defined by $g\mapsto \phi(gx)$.           
      Let $x\in X$ and $g,h\in G$. We treat several cases:
      \begin{enumerate}
          \item 
      For some $s\in S$, $gx,hx\in B_1(e) s$. Thus $\pi(gx)=\pi(hx)=s$. By \eqref{Lipschitz diatance}, it holds
      \begin{equation*}
          \begin{split}
              |\phi^x(g)-\phi^x(h)|&=|\phi(gx)-\phi(hx)|=\left|\frac{1-\rho(gx,\pi(gx))}{\alpha(\pi(gx))}-\frac{1-\rho(hx,\pi(hx))}{\alpha(\pi(hx))}\right|\\
              &=\frac{|\rho(hx,s)-\rho(gx,s)|}{\alpha(s)}\leq d(g,h).
          \end{split}
      \end{equation*}
     
      \item
      For  distinct $s_1,s_2\in S$, $gx\in B_1(e) s_1,hx\in B_1(e) s_2$. Thus $\pi(gx)=s_1$ and $\pi(hx)=s_2$.  Note  $\rho(gx,s_1)<1$ and $\rho(hx,s_2)<1$. Assume for a contradiction that $d(g,h)<2$. By the triangle inequality
      $$\rho(s_1,s_2)\leq \rho(s_1,gx)+\rho(gx,hx)+\rho(hx,s_2)\stackrel{\eqref{eq:rho_to_d}}{<}1+1+2<4.$$
      This contradicts the fact that $S$ is $B_{4}(e)$-lacunary. Thus $d(g,h)\geq 2$.

      By \eqref{eq:rho_to_d} and as $\alpha\geq 1$,
                 \begin{equation*}
\begin{split}
|\phi^x(g)-\phi^x(h)| 
&= \left|\frac{1-\rho(gx,s_1)}{\alpha(s_1)}-\frac{1-\rho(hx,s_2)}{\alpha(s_2)}\right| \\
&\leq \left|\tfrac{1-\rho(gx,s_1)}{\alpha(s_1)}\right|
     + \left|\tfrac{1-\rho(hx,s_2)}{\alpha(s_2)}\right|
     \leq \tfrac{2}{1}\leq d(g,h).
\end{split}
\end{equation*}

      \item
           For some $s\in S$, $gx\in B_1(e) s$ and $hx\notin B_1(e)\cdot S$. Thus $\pi(gx)=s$. As $\rho(gx,s)<1$ and $\rho(hx,s)\geq 1$, it holds by \eqref{Lipschitz diatance}
      \begin{equation*}
          \begin{split}
              |\phi^x(g)-\phi^x(h)|=\frac{1-\rho(gx,s)}{\alpha(s)}-0\leq\frac{\rho(hx,s)-\rho(gx,s)}{\alpha(s)}\leq d(g,h).
          \end{split}
      \end{equation*}
      \end{enumerate}
      We conclude that $|\phi^x(g)-\phi^x(h)|\leq d(g,h)$ for all $g,h\in G$ as desired. \hfill $\blacksquare$

      Define $\Phi:X\to\Lip_1(G)$ by $\Phi(x)=\phi^x$. By Lemma \ref{measurable}, $\Phi$ is a $G$-equivariant Borel map. \\
      \noindent
      \textbf{Claim.} The function $\Phi$ is injective.\\
      \textbf{Proof of the Claim.}  
     Let $x,y\in X$ and assume $\phi^x=\Phi(x)=\Phi(y)=\phi^y$.  As $S$ is a complete Borel cross-section and $\Phi$ is $G$-equivariant we may assume w.l.o.g.\ $x\in S$. If $y\in S$ then $\frac{1}{\alpha(x)}=\phi^x(e)=\phi^y(e)=\frac{1}{\alpha(y)}$. Thus in this case $x=y$ follows from  the injectivity of $\alpha$. Now assume for a contradiction $y\notin S$. As  $\phi^x(e)>0$, one must have $y\in B_1(e)\cdot S$ (otherwise $\phi^y(e)=0$). Let $s=\pi(y)\neq y$. As $\rho$ is an extended metric (see Lemma \ref{lem:rho_is_Borel}), it is clear $\phi(s)>\phi(y)$. Thus there exists $h_0\in B_1(e)$ so that $\phi^y(h_0)>\phi^y(e)$ (e.g., $h_0\in B_1(e)$ for which   $s=h_0y$). In contrast by the same reasoning for all  $h\in B_1(e)$, $\phi^x(h)\leq\phi^x(e)$. This contradicts $\phi^x=\phi^y$ and the proof is completed.      
      \hfill $\blacksquare$

   \noindent  
   We have thus constructed an injective $G$-equivariant Borel map $\Phi:X\to\Lip_1(G)$ as desired.
\end{proof}

\section{Gutman--Jin--Tsukamoto embedding theorem for multidimensional flows}\label{BK}

\subsection{Proof of the theorem}

\begin{proof}[Proof of Theorem \ref{RkGJT} assuming Theorems \ref{Rkdense1} \& \ref{Rkdense2} from Subsection \ref{subsec:Density theorems} below]

The proof uses a Baire category theorem argument. By assumption there exists a (topological) embedding $h_0:\Fix(X,\Rk)\hookrightarrow\Fix(\Lip_{1}(\Rk))\cong[0,1]$. Define 
\begin{equation*}
    C_{\Rk,h_0}(X;\Lip_{1}(\Rk))=\{f\in C(X;\Lip_{1}(\Rk)): f~\text{is}~\Rk-\equi,~f|_{\Fix(X,\Rk)}=h_0\},
\end{equation*}   
endowed with the compact-open topology. By Lemma \ref{lemma:non-empty} in the sequel this space is non-empty. Define

\begin{equation*}
       \Omega:=(X\setminus\Fix(X,\Rk))\times(X\setminus\Fix(X,\Rk))\setminus\{(x,x)| x\in X\}. 
    \end{equation*}
     For a closed set $A\subset X\setminus\Fix(X,\Rk)$, define
\begin{equation}\label{G(A)}
    G(A)=\{f\in C_{\Rk,h_0}(X;\Lip_{1}(\Rk)): f(A)\cap\Fix(\Lip_{1}(\Rk))=\emptyset\}.
\end{equation}
For closed sets $B$ and $C$ such that $B\times C \subset \Omega$, define
   \begin{equation}\label{G(B,C)}
    G(B,C)=\{f\in C_{\Rk,h_0}(X;\Lip_{1}(\Rk)): f(B)\cap f(C)=\emptyset\}.
\end{equation}
     By Theorem \ref{Rkdense1}  for any $p\in X\setminus\Fix(X,\Rk)$, there exists a closed neighborhood $A$ of $p$ in $X$ such that $G(A)$
is open and dense in $C_{\Rk,h_0}(X;\Lip_{1}(\Rk))$. By  Theorem  \ref{Rkdense2}  for any two distinct points $p,q\in X\setminus\Fix(X,\Rk)$, there exist closed neighborhoods $B$ and $C$ of $p$ and $q$ in $X$ respectively such that $G(B,C)$
is open and dense in $C_{\Rk,h_0}(X;\Lip_{1}(\Rk))$.
     
 As $X$ is a compact metric space, it is well known that $X$ and $X\times X$ are \textit{hereditarily Lindelöf spaces}\footnote{A \textbf{Lindelöf space} is a topological space in which every open cover has a countable subcover. A \textbf{hereditarily Lindelöf space} is a topological space such that every subspace of it is Lindelöf.}. Thus there exist countable families of closed subsets $\{A_n\}_{n=1}^{\infty},\{B_n\}_{n=1}^{\infty}$ and $\{C_n\}_{n=1}^{\infty}$ of $X$ such that     $X\setminus\Fix(X,\Rk)=\bigcup_{n=1}^{\infty}A_n$, $\Omega=\bigcup_{n=1}^{\infty}B_n\times C_n$ and   \begin{itemize}
         
    \item $G(A_n)$ is open and dense in $C_{\Rk,h_0}(X;\Lip_{1}(\Rk))$ for each $n\geq1$.
    \item $G(B_n,C_n)$ is open and dense in $C_{\Rk,h_0}(X;\Lip_{1}(\Rk))$ for each $n\geq1$.
\end{itemize}
    
By the Baire category theorem, the set $\bigcap\limits_{n=1}^{\infty}G(A_n)\cap\bigcap\limits_{n=1}^{\infty}G(B_n,C_n)$ is dense in $C_{\Rk,h_0}(X;\Lip_{1}(\Rk))$ and in particular non-empty. Fix $f\in \bigcap\limits_{n=1}^{\infty}G(A_n)\cap\bigcap\limits_{n=1}^{\infty}G(B_n,C_n)$. Let  $x,y\in X$ so that $x\neq y$. We verify $f(x)\neq f(y)$, which implies $f$ is an equivariant embedding of  $(X,\Rk)$ into $\Lip_{1}(\Rk)$, by considering several cases. If $x,y\in \Fix(X,\Rk)$, then $f(x)=h_0(x)\neq h_0(y)=f(y)$. If $x\notin \Fix(X,\Rk)$ and $y\in \Fix(X,\Rk)$, then there exists $n\in \N$ so that $x\in A_n$ and therefore $ f(A_n)\ni f(x)\neq h_0(y)= f(y)$ as $f(A_n)\cap\Fix(\Lip_{1}(\Rk))=\emptyset$.   Finally if   $x,y\notin \Fix(X,\Rk)$, then there exists $n\in \N$ so that $(x,y)\in B_n\times C_n$ and therefore $ f(B_n)\ni f(x)\neq f(y)\in f(C_n)$ as $f(B_n)\cap f(C_n)=\emptyset$.
 Q.E.D.

\end{proof}

\begin{defn}
For $f\in C_{\Rk}(X;\Lip_{1}(\Rk))$ we define $\Lip(f)$ as the supremum of
    \begin{equation*}
        \frac{|f(x)(\tt)-f(x)(\ss)|}{\|\mathbf {t-s}\|_2}
    \end{equation*}
    over all $x\in X$ and $\mathbf {s,t}\in\Rk$ with $\mathbf s\neq \mathbf t$.
    \end{defn}
The following lemma is used in the proof of Theorems \ref{Rkdense1} and \ref{Rkdense2}. 

\begin{lem}\label{lemma:non-empty}(cf. \cite[Lemma 3.1]{GJT19})\label{GJTLemma3.1}
        There exists $f_0\in C_{\Rk,h_0}(X;\Lip_{1}(\Rk))$ such that $\Lip(f_0)\leq\frac{1}{2}$. In particular $C_{\Rk,h_0}(X;\Lip_{1}(\Rk))\neq \emptyset$.
    \end{lem}
    \begin{proof}
        
         Consider the map
         \begin{equation*}
            h_0:\Fix(X,\Rk)\to[0,1].
         \end{equation*}
        By the Tietze extension theorem, see e.g. \cite{Tie15}, we may extend this function to a continuous map $h_1: X\to[0,1]$. 
Recall $\int_{\mathbb{R}}e^{-\alpha t^2}dt=\sqrt{\frac{\pi}{\alpha}}$ and $\int_{\mathbb{R}}|t|e^{-\alpha t^2}dt=\frac{1}{\alpha}$ for $\alpha>0$.        
                       Fix $b\geq\frac{4k}{\sqrt{\pi}}$ and let $\varphi:\mathbb{R}^k\to[0,1]$ be a smooth function defined by $\varphi(\tt)=\frac{1}{(\sqrt{\pi} b)^k}e^{-\frac{\sum_{i=1}^{k}t_i^2}{b^2}}$ for $\mathbf t\in\mathbb{R}^k$. Then
        \begin{equation*}
            \int_{\mathbb{R}^k}\varphi(\tt)d\mathbf t=\frac{1}{(\sqrt{\pi} b)^k}\int_{\mathbb{R}} e^{-\frac{t_1^2}{b^2}}dt_1\cdots\int_{\mathbb{R}}e^{-\frac{t_k^2}{b^2}}dt_k=\frac{1}{(\sqrt{\pi} b)^k}(\sqrt{\pi} b)^k=1
        \end{equation*}
        and
        \begin{equation*}
        \begin{split}
            \int_{\mathbb{R}^k}\|\nabla\varphi(\tt)\|_1 d\mathbf t&=\frac{1}{(\sqrt{\pi} b)^k}\frac{2}{ b^2}\int_{\Rk}(|t_1|+\cdots+|t_k|)e^{-\frac{t_1^2+\cdots+t_k^2}{b^2}}dt_1\cdots dt_k\\
            &=\frac{2}{\pi^{k/2} b^{k+2}}\int_{\mathbb{R}}|t_1|\cdot e^{-\frac{t_1^2}{b^2}}dt_1\cdot\int_{\mathbb{R}^{k-1}}e^{-\frac{\sum_{i=2}^k t_i^2}{b^2}}dt_2\cdots dt_k+\cdots\\ &+\frac{2}{\pi^{k/2} b^{k+2}}\int_{\mathbb{R}}|t_k|\cdot e^{-\frac{t_k^2}{b^2}}dt_2\cdot\int_{\mathbb{R}^{k-1}}e^{-\frac{\sum_{i=1}^{k-1} t_i^2}{b^2}}dt_1\cdots dt_{k-1}\\
            &=k\frac{2b^2 (\sqrt{\pi} b)^{k-1}}{\pi^{k/2} b^{k+2}}=\frac{2k}{b\sqrt{\pi}}
                        \leq\frac{1}{2}.
        \end{split}
        \end{equation*}
          For $x\in X$ and $\tt\in \Rk$ we define  
        \begin{equation*}
            f_0(x)(\tt)=\int_{\Rk}\varphi(\mathbf {t-s})h_1(\ss x)d\mathbf s.
        \end{equation*}
 Note $0\leq f_0(x)(\tt)\leq \int_{\Rk}\varphi(\mathbf {t-s})d\mathbf s=1$. Direct calculation shows that 
        \begin{equation*}
            \nabla f_0(x)(\tt)=\left(\frac{\partial f_0(x)}{\partial t_1}(\tt),\ldots,\frac{\partial f_0(x)}{\partial t_k}(\tt)\right)=\left(\int_{\Rk}\frac{\partial\varphi}{\partial t_1}(\mathbf {t-s})h_1(\ss x)d\mathbf s,\ldots, \int_{\Rk}\frac{\partial\varphi}{\partial t_k}(\mathbf {t-s})h_1(\ss x)d\mathbf s\right).
        \end{equation*}
        Since $0\leq h_1(x)\leq1$ for all $x\in X$, $\|\nabla f_0(x)(\tt)\|_1\leq\int_{\Rk}\|\nabla\varphi(\ss)\|_1 d\mathbf s\leq\frac{1}{2}$. By Lagrange's mean value theorem, for $\mathbf {s,t}\in\Rk$ with $\mathbf s\neq \mathbf t$, there exists $\theta\in(0,1)$ such that
        \begin{equation*}
            f_0(x)(\tt)-f_0(x)(\ss)=\langle \tt-\ss, \nabla f_0(x) ((1-\theta)\ss+\theta \tt)\rangle. 
        \end{equation*}
        Then using the Cauchy--Schwarz inequality  and the fact that $\|\cdot\|_2\leq \|\cdot\|_1 $ in Euclidean space, it holds        
       
        $$|f_0(x)(\tt)-f_0(x)(\ss)|\leq\|\tt-\mathbf s\|_2\cdot\|\nabla f_0(x)((1-\theta)\mathbf s+\theta \mathbf t)\|_2\leq\|\tt-\mathbf s\|_2\cdot\|\nabla f_0(x)((1-\theta)\mathbf s+\theta \mathbf t)\|_1\leq\frac{1}{2}\|\mathbf {t-s}\|_2.$$
                Thus $\Lip(f_0)\leq\frac{1}{2}$ and $f_0\in C(X;\Lip_{1}(\Rk))$. Note that $f_0:X\to\Lip_{1}(\Rk)$ is $\Rk$-equivariant, as $f_0(\rr x)(\tt)=\int_{\Rk}\varphi(\mathbf {t-s})h_1((\ss+\rr) x)d\mathbf s=f_0(x)(\mathbf {t+r})$ holds for all $x\in X$ and $\mathbf {t,r}\in\Rk$.
        Furthermore, if $x\in\Fix(X,\Rk)$, $\ss x=x$ for all $\mathbf s\in\Rk$ and therefore $h_1(\ss x)=h_1(x)=h_0(x)$. Hence $f_0|_{\Fix(X,\Rk)}=h_0$ since $\int_{\Rk}\varphi(\ss)d\mathbf s=1$.
               Thus we conclude that $f_0\in C_{\Rk,h_0}(X;\Lip_{1}(\Rk))$.
    \end{proof}

\subsection{Density theorems}\label{subsec:Density theorems}

\begin{thm}\label{Rkdense1}(cf. \cite[Lemma 3.3]{GJT19})
Let $(X,\Rk)$ be a  weakly locally free topological multidimensional flow and  $h_0:\Fix(X,\Rk)\hookrightarrow[0,1]$ be a topological  embedding.  
Then for any $p\in X\setminus\Fix(X,\Rk)$, there exists a closed neighborhood $A$ of $p$ in $X$ such that $G(A)$\footnote{See Equation \eqref{G(A)}.}
is open and dense in $C_{\Rk,h_0}(X;\Lip_{1}(\Rk))$.
\end{thm}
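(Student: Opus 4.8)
The plan is to mimic the $k=1$ argument of Gutman--Jin--Tsukamoto but replace the one-dimensional section machinery with the local sections provided by Theorem \ref{thm:local section}. First I would fix $p \in X\setminus\Fix(X,\Rk)$ and invoke Theorem \ref{thm:local section} to obtain a local section at $p$, i.e.\ a closed set $S \ni p$ and $a>0$ so that $\Phi:[-a,a]^k\times S\to X$, $(\tt,x)\mapsto\tt x$, is continuous, injective, and has image containing an open neighborhood of $p$. Shrinking, I would choose a closed neighborhood $A$ of $p$ of the form $A = \Phi([-a',a']^k\times S')$ for a sufficiently small $a' < a$ and a small closed neighborhood $S'\subset S$ of $p$, so that $A$ sits well inside the interior of $\Phi([-a,a]^k\times S)$. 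The openness of $G(A)$ is routine: $\Fix(\Lip_1(\Rk))$ is closed in $\Lip_1(\Rk)$, $A$ is compact, and $f\mapsto f(A)$ is continuous into the hyperspace, so $\{f : f(A)\cap\Fix(\Lip_1(\Rk))=\emptyset\}$ is open in the compact-open topology.

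The substance is density. Given $f\in C_{\Rk,h_0}(X;\Lip_1(\Rk))$ and $\epsi>0$, I must produce $\tilde f\in G(A)$ with $\|f-\tilde f\|<\epsi$ (uniformly on a large compact box in the $\Rk$-coordinate). The idea is to add a small equivariant bump that is genuinely non-constant near $A$, so that no point of $A$ is mapped to a constant (fixed) function. Concretely, I would pick a smooth function $\chi$ on $X$ supported near $A$ but using the section coordinates: choose a Lipschitz function $w:S\to[0,1]$ (or more simply a function of the section base point) together with a genuinely non-constant profile in one of the section directions, say $u_0\in\Lip_1(\Rk)$ depending only on the first coordinate in a window of size $\sim a'$, and ``spread it along the orbit'' via $g(x) = \lambda\,\eta(x)$ where $\eta$ is an $\OrbLip$-type function built so that $g^x$ is non-constant for every $x\in A$. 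Then set $\tilde f(x)(\tt) = (1-\lambda)f(x)(\tt) + (\text{equivariant correction built from }g)$, chosen so that (i) $\tilde f$ remains $\Rk$-equivariant, (ii) $\tilde f|_{\Fix(X,\Rk)} = h_0$ (achieved by making the correction vanish on $\Fix$, which is automatic since the correction is built from the non-constant section coordinate, which degenerates on fixed points), (iii) $\tilde f(x)$ is $1$-Lipschitz (use Lemma \ref{GJTLemma3.1}: start from $f_0$ with $\Lip(f_0)\le 1/2$, convex-combine, and keep the total Lipschitz constant $\le 1$), and (iv) for each $x\in A$, $\tilde f(x)$ is non-constant, hence not fixed. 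For (iv) the key point is that along the local section the map $\tt\mapsto \tilde f(x)(\tt)$ inherits genuine variation from the added bump because $\Phi$ is injective on $[-a,a]^k\times S$, so distinct values of $\tt$ within the window correspond to distinct points, and the bump was designed to separate them.

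The main obstacle I expect is reconciling \emph{global $\Rk$-equivariance} with a \emph{locally supported} perturbation: a naive bump supported near $A$ will not be equivariant, so I must instead perturb by an orbital quantity. The clean way is to work with a function $g\in\OrbLip_1(X)$ (in the sense of Definition \ref{time-L1}, adapted to the compact t.d.s.) obtained by integrating an indicator of a small flow box built over the section $S'$ — exactly as the functions $g_{n,m}$ were built in the proof of Theorem \ref{good isomorphism} — so that $g^x$ is automatically $1$-Lipschitz and non-constant precisely when $\mathrm{Orb}(x)$ meets the box, which it does for $x\in A$. Equivariance of $\psi_g$ is then free from Lemma \ref{measurable} (here it is in fact continuous, since $g$ is continuous on the t.d.s.), the Lipschitz bookkeeping is handled by scaling $g$ by a small $\lambda$ and convex-combining with $f_0$, and matching $h_0$ on $\Fix$ follows since $g^x\equiv 0$ whenever $x\in\Fix(X,\Rk)$ (the flow box, being transverse to the orbit, cannot meet a fixed point's orbit in the relevant window, after possibly shrinking). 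Verifying that the perturbation can be made uniformly small on each compact coordinate box — which forces the box height, and hence $\lambda$, to be taken small — is the remaining technical check.
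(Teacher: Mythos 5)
Your overall architecture matches the paper's: local section from Theorem \ref{thm:local section}, convex combination with the $f_0$ of Lemma \ref{GJTLemma3.1} to create Lipschitz slack, an orbital (marker-based) perturbation to reconcile equivariance with local support, and the observation that the perturbation degenerates on $\Fix(X,\Rk)$. However, there is a genuine gap at the heart of the density step. You propose to \emph{add} to $(1-\lambda)f(x)$ a small equivariant bump whose restriction to the orbit window is non-constant, and to conclude that $\tilde f(x)$ is non-constant for $x\in A$ because it ``inherits genuine variation from the added bump.'' This does not follow: a non-constant function plus a non-constant function can be constant, and for a fixed $f$ there may be points $x\in A$ at which the oscillation of $(1-\lambda)f(x)$ exactly cancels that of the bump. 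You cannot escape by domination (the bump is $1$-Lipschitz scaled by $\lambda$, while $(1-\lambda)f(x)$ is $(1-\lambda)$-Lipschitz, so the bump only dominates when $\lambda>\tfrac12$, i.e.\ when the perturbation is no longer small), nor by genericity of $\lambda$ (each $x$ excludes at most one $\lambda$, but the excluded set over the uncountably many $x\in A$ can cover $(0,1)$, and $\lambda_{\mathrm{bad}}(x)$ need not vary continuously). The paper avoids this by \emph{replacing}, not adding: Lemma \ref{main lemma} produces $g$ with $g(x)|_{\AA}=\sum_m h_m(x)\uu_m$ for a partition of unity $\{h_m\}$ and vectors $\uu_m$ chosen (via Lemmas \ref{lem:ap_1} and \ref{lem:Almost sure linear independence}) so that $\mathbf e,\uu_1,\dots,\uu_M$ are linearly independent; hence $g(x)|_{\AA}\notin\R\mathbf e$ for \emph{every} $x$, and on the window around a marker in $A_0$ the new map equals $g(\ss x)$ outright. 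The Lipschitz extension theorems (Theorems \ref{thm:Lip_Ext} and \ref{thm:Lip_C_infty}) are then what let one interpolate off the finite grid $\AA$ while matching $f_1$ on $\Edge([0,a]^k)$, which is the other ingredient your sketch leaves unaddressed.

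A secondary problem: the flow-box functions $g_{F,m}(x)=\tfrac12\int \mathbf 1_F(\uu x)\,d\uu$ from Theorem \ref{good isomorphism} are only Borel in the general topological setting, not continuous --- the marker set $H(x)=\{\tt:\tt x\in S\}$ jumps discontinuously as orbits enter and leave $[-a,a]^k\cdot S$, since $S$ is merely a closed local section (in the suspension-flow setting of Section \ref{Sec:Representation of measurable one-dimensional flows} the boxes are clopen in the base direction, which is why continuity is harmless there). The paper restores continuity in $x$ with the cutoff $q:S\to[0,1]$, equal to $1$ on $A_0$ and supported in $\Int([-a,a]^k\cdot S)$, so that markers appearing or disappearing at the boundary contribute with weight tending to $0$. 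Without this (or an equivalent device) your perturbation $g_1$ will not lie in $C_{\Rk,h_0}(X;\Lip_1(\Rk))$.
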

\begin{proof}
    Fix $p\in X\setminus\Fix(X,\Rk)$. By Theorem \ref{thm:local section}, there is a local section $(a,S)$ around $p$. We choose a closed neighborhood $A_0$ of $p$ in $S$ satisfying $A_0\subset\Int([-a,a]^k\cdot S)$. We define a closed neighborhood $A$ of $p$ in $X$ by
    \begin{equation*}
        A=\bigcup\limits_{\tt\in[-a,a]^k} \tt A_0.
    \end{equation*}
       
\noindent   Notice that $A$ is a neighborhood of $p$ as $(a,S)$ is a local section around $p$.  It is obvious that the set $G(A)$ defined by Equation \eqref{G(A)} is open in $C_{\Rk,h_0}(X;\Lip_{1}(\Rk))$. We now prove that $G(A)$ is also dense in $C_{\Rk,h_0}(X;\Lip_{1}(\Rk))$. 
     Fix $f\in C_{\Rk,h_0}(X;\Lip_{1}(\Rk))$ and $0<\delta<1$. Let $f_0$ be given by Lemma \ref{GJTLemma3.1}. We define $f_1\in C_{\Rk,h_0}(X;\Lip_{1}(\Rk))$ by
    \begin{equation}\label{delta perturb}
        f_1(x)(\tt)=(1-\delta)f(x)(\tt)+\delta f_0(x)(\tt).
    \end{equation}
    It follows $\tau:=\Lip(f_1)\leq1-\frac{\delta}{2}<1$. We apply Lemma \ref{main lemma} from Subsection \ref{subsec:main_lemma} below  to the map
    \begin{equation*}
        X\ni x\mapsto f_1(x)|_{[0,a]^k}\in\Lip_{\tau}([0,a]^k)
    \end{equation*}
    and find $\AA \subset (0,a)^k$ a finite non-empty subset and $g\in C(X;\Lip_{1}([0,a]^k))$ such that
    \begin{enumerate}[$(1)$]
    \item  $|g(x)(\tt)-f_1(x)(\tt)|<2\delta$ for all $x\in X$ and $\mathbf t\in[0,a]^k$.
    \item  For all $x\in X$, $g(x)(\tt)=f_1(x)(\tt)$ if $\mathbf t\in\Edge([0,a]^k)$.
   \item $g(x)_{|\AA}$ is a non-constant function for all $x\in X$.
    \end{enumerate}

\noindent     We now define a perturbation $g_1:X\to\Lip_{1}(\Rk)$ of $f_1\in C_{\Rk,h_0}(X;\Lip_{1}(\Rk))$ (which itself is a perturbation of $f$). For each $x\in X$, define $H(x):=\{\mathbf t\in\Rk|\,\, \tt x\in S\}$. The set $H(x)$ is best thought of as a set of \textit{markers} on the orbit of $x$. As $(a,S)$ is a local section around $p$, any two distinct $\mathbf {t,s}\in H(x)$ satisfy $\tt-\ss\notin [-a,a]^k$. Indeed if  $\tt-\ss\in [-a,a]^k$ then the map $(\rr,x)\mapsto \rr  x$ is not injective on $[-a,a]^k\times S$ as the pairs $(\tt-\ss, \ss x)$ and  $(\00, \tt x)$ have the same image. Thus the markers on each orbit are separated.
Recall  $A_0$ is a closed neighborhood of $p$ in $S$ satisfying $A_0\subset\Int([-a,a]^k\cdot S)$. Our goal is to change $f_1(x)$ for $x\in A$ at locations  $\ss\in H(x)$ with $\ss x \in A_0$ (by the definition of $\AA$ such $\ss$ exists). Specifically for such $\ss$ and $\tt\in [0,a]^k$ we would like to replace $f_1(\ss x)(\tt)$ by $g(\ss x)(\tt)$. Note that such a replacement results with a $1$-Lipschitz (in particular continuous function). Indeed  for $\mathbf t\in \Edge([0,a]^k)$, it holds $g(\ss x)(\tt)=f_1(\ss x)(\tt)=f_1(x)(\tt+\ss)$ by the second property of $g$. Observe however that the map $x\mapsto H(x)$ is not continuous in any meaningful way. Thus in order to make the replacement process we just described continuous in the variable $x$, we introduce \textit{weights}. 
Choose a continuous function $q:S\to[0,1]$ such that $q=1$ on $A_0$ and $\supp(q)\subset\Int([-a,a]^k\cdot S)$.
Let $x\in X$.

 For $t\in[0,a]^k$ set
$$u(x)(\tt)=g(x)(\tt)-f_1(x)(\tt).$$
    \begin{itemize}
        \item For each $\mathbf s\in H(x)$ and $\mathbf t\in[s_1,s_1+a]\times \cdots \times [s_k,s_k+a]$, set
        \begin{equation*}
            g_1(x)(\tt)=f_1(x)(\tt)+q(\ss x)\cdot u(\ss x)(\mathbf {t-s}).
        \end{equation*}
        \item For $\mathbf t\in\Rk\setminus\bigcup\limits_{\mathbf s\in H(x)}[s_1,s_1+a]\times \cdots \times [s_k,s_k+a]$, set $g_1(x)(\tt)=f_1(x)(\tt)$.
    \end{itemize}
    Then $g_1(x)$ is well-defined since any two distinct $\mathbf {t,s}\in H(x)$ satisfy $\tt-\ss\notin [-a,a]^k$. Note that as desired if  $\ss x\in A_0$ then for $\tt\in [0,a]^k$ 
    $$
g_1(\ss x)(\tt)=f_1(\ss x)(\tt)+1\cdot u(\ss x)(\tt)=g(\ss x)(\tt).
    $$
    \begin{claim}\label{equi and perturb}
It holds that  $g_1\in C_{\Rk,h_0}(X;\Lip_{1}(\Rk))$ and $\max\limits_{x\in X}\max\limits_{\mathbf t\in\Rk}|g_1(x)(\tt)-f(x)(\tt)|\leq 4\delta$. 
    \end{claim}
    \begin{proof}
    It is not hard to see $g_1$ is continuous in the variable $x$. Indeed the key observation is that if $x_i\rightarrow x$ and $\ss \in H(x)$ such that $\ss x \in \supp(q)\subset\Int([-a,a]^k\cdot S)$, then one may find $\ss_i \in \Rk$ such that $\ss_i\rightarrow \ss$ and $\ss_i \in H(x_i)$.  Note that $H(x)=\emptyset$ if $x\in\Fix(X,\Rk)$. Thus, $g_1|_{\Fix(X,\Rk)}=f_1|_{\Fix(X,\Rk)}=h_0$. 
    \noindent
       Let $x\in X$ and $\rr\in \Rk$, if $\mathbf s\in H(\rr x)$, then for $\mathbf t\in[s_1,s_1+a]\times \cdots \times [s_k,s_k+a]$ it holds
    \begin{equation}\label{LHS}
        g_1(\rr x)(\tt)=f_1(\rr x)(\tt)+q((\ss+\rr)x)\cdot u((\ss+\rr)x)(\mathbf {t-s}).
    \end{equation}
    Note that $\mathbf s\in H(\rr x)$ implies that $(\ss+\rr)x\in S$ and therefore $\ss+\rr\in H(x)$ and $\tt+\rr\in [s_1+r_1,s_1+r_1+a]\times \cdots \times [s_k+r_k,s_k+r_k+a]$. Thus
    \begin{equation*}
        g_1(x)(\mathbf {t+r})=f_1(x)(\mathbf {t+r})+q((\ss+\rr)x)\cdot u((\ss+\rr)x)((\mathbf {t+r})-(\mathbf {s+r}))~\text{for}~\mathbf t\in[s_1,s_1+a]\times \cdots \times [s_k,s_k+a],
    \end{equation*}
    which clearly equals to the RHS of Equation \eqref{LHS} as $f_1$ is $\Rk$-equivariant.
    Now assume $\tt\in\Rk\setminus\bigcup\limits_{\mathbf s\in H(\rr x)}[s_1,s_1+a]\times \cdots \times [s_k,s_k+a]$. Thus by definition $g_1(\rr x)(\tt)=f_1(\rr x)(\tt)$. As $f_1$ is $\Rk$-equivariant we have  $g_1(\rr x)(\tt)=f_1(x)(\rr+\tt)$. We claim $g_1(x)(\rr+\tt)=f_1(x)(\rr+\tt)$. Indeed it is enough to show $\rr+\tt\in \Rk\setminus\bigcup\limits_{\mathbf s\in H( x)}[s_1,s_1+a]\times \cdots \times [s_k,s_k+a]$ which follows easily from the condition on $\tt$.  Combining the two cases, we conclude that $g_1$ is $\Rk$-equivariant. Furthermore for all $x\in X$ and $\ss\in H(x)$ and $\tt\in[s_1,s_1+a]\times \cdots \times [s_k,s_k+a]$ it holds
   \begin{equation}
        \begin{split}
    g_1(x)(\tt)&=f_1(x)(\tt)+q(\ss x)\cdot u(\ss x)(\mathbf {t-s})=f_1(x)(\tt)+q(\ss x)\cdot (g(\ss x)(\mathbf {t-s})-f_1(\ss x)(\mathbf {t-s}))\\
    &=(1-q(\ss x))f_1(x)(\tt)+q(\ss x)\cdot g(\ss x)(\mathbf {t-s})\in [0,1].
    \end{split}
     \end{equation}
     We conclude that for all $x\in X$, $g_1(x)(\R^k)\subset [0,1]$.
          Finally observe that for all $x\in X$ and $\tt\in\Rk$
    \begin{equation}\label{eq:g_1-f_1}
        |g_1(x)(\tt)-f_1(x)(\tt)|\leq \|q_{|S}\|_{\infty} \|(g-f_1)_{|X\times [0,a]^k}\|_{\infty}\leq 1\cdot 2\delta
    \end{equation}
      Therefore for all $x\in X$ and $\mathbf t\in\Rk$,
    \begin{equation}\label{eq:g_1-f}
        \begin{split}
            |g_1(x)(\tt)-f(x)(\tt)|&\leq |g_1(x)(\tt)-f_1(x)(\tt)|+|f_1(x)(\tt)-f(x)(\tt)|\\
            &=|g_1(x)(\tt)-f_1(x)(\tt)|+|(1-\delta)f(x)(\tt)+\delta f_0(x)(\tt)-f(x)(\tt)|\\
            &\stackrel{Eq. \eqref{eq:g_1-f_1}}{\leq}2\delta+\delta|f_0(x)(\tt)-f(x)(\tt)|\leq 4\delta.
        \end{split}
    \end{equation}
    \end{proof}
    We proceed with the proof of Theorem \ref{Rkdense1}. If $x\in A$, there exists $\mathbf s\in[-a,a]^k$ with $\ss x\in A_0\subset S$. Therefore $q(\ss x)=1$ and $\mathbf 0\in H(\ss x)$. By the definition of $g_1(\ss x)$, for $\mathbf t\in[0,a]^k$
    \begin{equation*}\label{g1=g}
        g_1(x)(\mathbf {s+t})=g_1(\ss x)(\tt)=f_1(\ss x)(\tt)+u(\ss x)(\mathbf {t-0})=g(\ss x)(\tt).
    \end{equation*}
    By the third property of the function $g(x)$, we obtain that $g_1(x):\Rk\to[0,1]$ is a non-constant function for each $x\in A$, i.e. $g_1(A)\cap\Fix(\Lip_{1}(\Rk))=\emptyset$. We conclude $g_1\in G(A)$. Since $f$ and $\delta$ were chosen arbitrarily, $G(A)$ is dense in $C_{\Rk,h_0}(X;\Lip_{1}(\Rk))$.
\end{proof}

\begin{thm}\label{Rkdense2}(cf. \cite[Lemma 3.4]{GJT19})
Let $(X,\Rk)$ be a weakly locally free topological multidimensional flow and  $h_0:\Fix(X,\Rk)\hookrightarrow[0,1]$ be a topological  embedding. Then for any two distinct points $p,q\in X\setminus\Fix(X,\Rk)$, there exist closed neighborhoods $B$ and $C$ of $p$ and $q$ in $X$ respectively such that $G(B,C)$\footnote{See Equation \eqref{G(B,C)}.}
is open and dense in $C_{\Rk,h_0}(X;\Lip_{1}(\Rk))$.
\end{thm}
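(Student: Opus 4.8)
The strategy is to copy, almost verbatim, the proof of Theorem \ref{Rkdense1}: one shows $G(B,C)$ is open by a soft compactness argument, and proves density by perturbing an arbitrary $f\in C_{\Rk,h_0}(X;\Lip_1(\Rk))$ twice --- first to a map of Lipschitz constant $<1$, and then by a local surgery along orbit markers --- now arranging that the images of small neighbourhoods of $p$ and of $q$ become \emph{disjoint} rather than merely disjoint from $\Fix(\Lip_1(\Rk))$. This is the $\Rk$-analogue of \cite[Lemma 3.4]{GJT19}. Concretely: as $\Fix(X,\Rk)$ is closed and $p\neq q$ lie in its complement, fix disjoint compact neighbourhoods $B_\ast\ni p$ and $C_\ast\ni q$ inside $X\setminus\Fix(X,\Rk)$; any closed $B\subset B_\ast$ and $C\subset C_\ast$ then satisfy $B\times C\subset\Omega$, and for such $B,C$ the set $G(B,C)$ of \eqref{G(B,C)} is open, since if $f\in G(B,C)$ then the compacta $f(B),f(C)$ are disjoint, hence lie at distance $3\eta>0$ for some $\eta$, and every $f'$ with $\sup_{x\in X}\|f'(x)-f(x)\|_{\Lip_1(\Rk)}<\eta$ stays in $G(B,C)$. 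For density fix $f$ and $0<\delta<1$ and, exactly as in the proof of Theorem \ref{Rkdense1}, replace $f$ by $f_1=(1-\delta)f+\delta f_0$ with $f_0$ from Lemma \ref{GJTLemma3.1}, so that $\tau:=\Lip(f_1)\leq 1-\tfrac{\delta}{2}<1$ and $\sup_x\|f_1(x)-f(x)\|_{\Lip_1(\Rk)}\leq\delta$; it then suffices to produce $g_1\in G(B,C)$ with $\sup_x\|g_1(x)-f(x)\|_{\Lip_1(\Rk)}\leq4\delta$.

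By Theorem \ref{thm:local section} choose local sections $(a,S^p)$ at $p$ and $(a,S^q)$ at $q$ with a common $a>0$, closed neighbourhoods $A_0^p\subset S^p$ of $p$ and $A_0^q\subset S^q$ of $q$ with $A_0^\bullet\subset\Int([-a,a]^k\cdot S^\bullet)$, and continuous cut-offs $r^p:S^p\to[0,1]$, $r^q:S^q\to[0,1]$ supported in those interiors and equal to $1$ precisely on $A_0^p$ and $A_0^q$; set $B=[-a,a]^k A_0^p$ and $C=[-a,a]^k A_0^q$, after shrinking $a$ so that $B\subset B_\ast$ and $C\subset C_\ast$. For $x\in X$ write $H^p(x)=\{\tt\in\Rk:\tt x\in S^p\}$ and likewise $H^q(x)$; as in the proof of Theorem \ref{Rkdense1}, the windows $\ss+[0,a]^k$ with $\ss\in H^\bullet(x)$ attached to any single orbit are pairwise disjoint. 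Apply Lemma \ref{main lemma} of Subsection \ref{subsec:main_lemma} to $x\mapsto f_1(x)|_{[0,a]^k}$ to obtain finite sets $\AA^p,\AA^q\subset(0,a)^k$ and maps $g^p,g^q\in C(X;\Lip_1([0,a]^k))$ satisfying the three properties stated in the proof of Theorem \ref{Rkdense1}, chosen in addition so that the two profiles are \emph{incompatible}: at a distinguished point $\aa_0\in(0,a)^k$ the profile $g^p(x)$ exceeds $f_1(x)$ by a definite amount while $g^q(x)$ coincides with $f_1(x)$ near $\aa_0$ (or the roles are exchanged), each remaining within $2\delta$ of $f_1$ and agreeing with $f_1$ on $\Edge([0,a]^k)$. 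Form $g_1$ by patching $f_1$ along the $S^p$-markers with $(g^p,r^p)$ and along the $S^q$-markers with $(g^q,r^q)$, precisely as in the proof of Theorem \ref{Rkdense1}; after shrinking $a$ again we may assume that when $p,q$ lie on distinct orbits the saturations $[-3a,3a]^k S^p$ and $[-3a,3a]^k S^q$ are disjoint, so on each orbit the two families of windows never overlap and $g_1$ is unambiguously defined (when $\orb(p)=\orb(q)$ one uses a single section $(a,S^p)$, as indicated below). That $g_1\in C_{\Rk,h_0}(X;\Lip_1(\Rk))$ and $\sup_x\|g_1(x)-f(x)\|_{\Lip_1(\Rk)}\leq4\delta$ is proved exactly as in Claim \ref{equi and perturb}.

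It remains to check that $g_1\in G(B,C)$, i.e. $g_1(x)\neq g_1(y)$ for every $x\in B$ and $y\in C$. Each $x\in B$ has a full-weight $S^p$-marker $\ss_x\in[-a,a]^k$ (so $\ss_x x\in A_0^p$ and $r^p(\ss_x x)=1$), at which $g_1(x)$ carries the $g^p$-profile on the window $\ss_x+[0,a]^k$; symmetrically $g_1(y)$ carries the $g^q$-profile at the full-weight $S^q$-marker of $y$. Suppose $g_1(x)=g_1(y)$. Applying $\Rk$-equivariance of $g_1$ with the translate $\ss_x$ gives $g_1(\ss_x x)=g_1(\ss_x y)$, so $g_1(\ss_x y)|_{[0,a]^k}$ must equal the $g^p$-profile $g^p(\ss_x x)$; by the incompatibility of the two profiles and the way $g^p,g^q$ were chosen via Lemma \ref{main lemma}, this can only happen if $\ss_x y$ sits over $A_0^p$ (or at least inside the region where the $S^p$-surgery was performed). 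But $\ss_x y$ lies in a prescribed small neighbourhood of a point of $\orb(q)$, and this neighbourhood is disjoint from $A_0^p$: in the distinct-orbit case because the saturations $[-3a,3a]^k S^p$ and $[-3a,3a]^k S^q$ were made disjoint, and in the common-orbit case because $q\notin S^p$ (if $q=\rr_0p$ with $\rr_0\in[-a,a]^k$ this would violate injectivity of the section, while if $\rr_0\notin[-a,a]^k$ it is arranged by shrinking $S^p$ spatially). This contradiction gives $g_1(x)\neq g_1(y)$; hence $G(B,C)$ is open and dense, and Theorem \ref{Rkdense2} follows by the Baire-category argument already used for Theorem \ref{RkGJT}.

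The step I expect to be the real obstacle is the last paragraph in the sub-case $\orb(p)=\orb(q)$. There equivariance forces $g_1(q)=\rr_0\cdot g_1(p)$, so separating $p$ and $q$ is precisely the requirement that $g_1(p)$ not be $\rr_0$-periodic, and --- uniformly --- that $g_1(x)$ not be $\rr$-periodic for $\rr$ ranging over the small set of return vectors realised between $x\in B$ and $y\in C$. One must therefore arrange the $S^p$-surgery so that the $g^p$-profile recurs along $\orb(p)$ only at the full-weight $S^p$-markers, which, being $[-a,a]^k$-separated and avoiding $q$, are not translates of one another by any such $\rr$; making this work continuously in $x$ and simultaneously for all $x\in B$ and $y\in C$ --- and, more basically, producing via Lemma \ref{main lemma} a profile that is recognisable against the ambient $f_1$ despite $\tau$ being close to $1$ --- is the delicate part and is where the argument genuinely departs from \cite[Lemma 3.4]{GJT19}.
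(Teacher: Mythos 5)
Your proposal has a genuine gap in the separation step, and it is located exactly where you suspected. The ``incompatible profiles'' mechanism does not separate $x\in B$ from $y\in C$: after translating by $\ss_x$ you compare the fully perturbed window of $x$ (carrying $g^p(\ss_x x)$) with whatever $g_1(y)$ does \emph{at the same location} $\ss_x+[0,a]^k$, and in the generic case that is just the unperturbed $f_1(y)(\ss_x+\cdot)$. Nothing relates $f_1(y)$ to $f_1(x)$ there, so $f_1(y)(\ss_x+\aa_0)$ can perfectly well equal $f_1(x)(\ss_x+\aa_0)+c$, i.e.\ the unperturbed $y$ can accidentally imitate the perturbed $x$. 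Your ``incompatibility'' only distinguishes $g^p(x)$ from $g^q(x)$ and from $f_1(x)$ at the \emph{same} base point; it gives no control over a comparison across two different base points. Moreover you explicitly leave the case $\orb(p)=\orb(q)$ unresolved, and you only invoke the three properties of Lemma \ref{main lemma} used in Theorem \ref{Rkdense1}, none of which is strong enough here.

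The paper's proof closes this gap differently. It uses a \emph{single} profile $g$ from Lemma \ref{main lemma}, but crucially its property $(4)$ (the shifted-grid recognition property): if $g(x)(\tt+\ww)=g(y)(\tt)$ for all $\tt\in[\frac{a}{8},\frac{7a}{8}]^k\cap\AA$ with $\ww\in[-\frac{a}{8},\frac{a}{8}]^k$, then $\ww=\mathbf 0$ and $d(x,y)<\delta$. One takes one marker set $H(x)=\{\tt:\tt x\in S_1\cup S_2\}$ with $[-a,a]^k\cdot S_1$ and $[-a,a]^k\cdot S_2$ disjoint, defines $B,C$ as the $[-\frac{a}{16},\frac{a}{16}]^k$-saturations of $B_0,C_0$ (not the full $[-a,a]^k$-saturations, so that the relative shift $\ww=\rr-\ss$ of the two full-weight windows lies in $[-\frac{a}{8},\frac{a}{8}]^k$), and chooses $\delta<d(B_0,C_0)$ in advance. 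If $g_1(x)=g_1(y)$ one compares the two \emph{full-weight} windows (at $\ss$ for $x$ and at $\rr$ for $y$), obtains $g(\ss x)(\tt+\ww)=g(\rr y)(\tt)$ on the grid, and property $(4)$ yields $d(\ss x,\rr y)<\delta<d(B_0,C_0)$, a contradiction. This handles the same-orbit and distinct-orbit cases uniformly, with no periodicity analysis. To repair your argument you should compare the two full-weight windows rather than $x$'s window against $y$'s unperturbed background, which is precisely what property $(4)$ of Lemma \ref{main lemma} is designed for.
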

\begin{proof}
    Fix two distinct points $p,q\in X\setminus\Fix(X,\Rk)$. By Theorem \ref{thm:local section}, there are two $\Rk$-local sections $(a,S_1)$ and $(a,S_2)$ around $p$ and $q$ respectively. We may assume that $[-a,a]^k\cdot S_1$ and $[-a,a]^k\cdot S_2$ are disjoint. 
    Similarly to what is done in the proof of Theorem \ref{Rkdense1},  for each $x\in X$, we define $H(x):=\{\mathbf t\in\Rk|\,\, \tt x\in S_1\cup S_2\}$. 
    We choose closed neighborhoods $B_0$ of $p$ in $S_1$ and $C_0$ of $q$ in $S_2$ satisfying $B_0\subset\Int([-a,a]^k\cdot S_1)$ and $C_0\subset\Int([-a,a]^k\cdot S_2)$. 
    Fix a continuous function $\widetilde{q}:X\to[0,1]$ such that $\widetilde{q}=1$ on $B_0\cup C_0$ and $\supp(\widetilde{q})\subset\Int([-a,a]^k\cdot S_1)\cup\Int([-a,a]^k\cdot S_2)$.
    We define closed neighborhoods $B$ and $C$ of $p$ and $q$ respectively by
    \begin{equation*}
        B=\bigcup\limits_{\mathbf t\in[-\frac{a}{16},\frac{a}{16}]^k}\tt B_0
        ,~C=\bigcup\limits_{\mathbf t\in[-\frac{a}{16},\frac{a}{16}]^k}\tt C_0.
    \end{equation*}
     
 \noindent   It is obvious that the set $G(B,C)$ defined by Equation \eqref{G(B,C)} is open in $C_{\Rk,h_0}(X;\Lip_{1}(\Rk))$. We now prove that $G(B,C)$ is also dense in $C_{\Rk,h_0}(X;\Lip_{1}(\Rk))$. Fix $f\in C_{\Rk,h_0}(X;\Lip_{1}(\Rk))$ and $0<\delta<1$. We assume w.l.o.g.\ that 
    \begin{equation*}
        \delta<d(B_0,C_0):=\min\limits_{x\in B_0,y\in C_0}d(x,y).
    \end{equation*}
    Let $f_0$ be given by Lemma \ref{GJTLemma3.1}. We define $f_1\in C_{\Rk,h_0}(X;\Lip_{1}(\Rk))$ by Equation \eqref{delta perturb}. By the definition it is clear that $\tau:=\Lip(f_1)\leq1-\frac{\delta}{2}$ and $\max\limits_{x\in X}\max\limits_{\mathbf t\in\Rk}|f_1(x)(\tt)-f(x)(\tt)|<2\delta$. We apply Lemma \ref{main lemma} from Subsection \ref{subsec:main_lemma} below to the map
    \begin{equation*}
        X\ni x\mapsto f_1(x)|_{[0,a]^k}\in\Lip_{\tau}([0,a]^k)
    \end{equation*}
    and find $\AA \subset (0,a)^k$ a finite non-empty subset and $g\in C(X;\Lip_{1}([0,a]^k))$ such that
    \begin{enumerate}[$(1)$]
    \item  $|g(x)(\tt)-f_1(x)(\tt)|<2\delta$ for all $x\in X$ and $\mathbf t\in[0,a]^k$.
    \item  For all $x\in X$, $g(x)(\tt)=f_1(x)(\tt)$ if $\mathbf t\in\Edge([0,a]^k)$.
    \item If $x,y\in X$ and $\mathbf w\in[-\frac{a}{8},\frac{a}{8}]^k$ satisfy that for all $\tt \in\left[\frac{a}{8},\frac{7a}{8}\right]^k\cap \AA$  it holds 
   \begin{equation*}
       g(x)(\mathbf{t+w})=g(y)(\tt)
   \end{equation*}
   then $\mathbf w=\mathbf{0}$ and $d(x,y)<\delta$.
    \end{enumerate}
   Set $u(x)(\tt)=g(x)(\tt)-f_1(x)(\tt)$ for $x\in X$ and $\mathbf t\in[0,a]^k$. We define $g_1:X\to\Lip_{1}(\Rk)$ as follows. Fix $x\in X$,
    \begin{itemize}
        \item For each $\mathbf s\in H(x)$ and $\mathbf t\in[s_1,s_1+a]\times \cdots \times [s_k,s_k+a]$, set
        \begin{equation*}
            g_1(x)(\tt)=f_1(x)(\tt)+\widetilde{q}(\ss x)\cdot u(\ss x)(\tt-\mathbf s).
        \end{equation*}
        \item For $\mathbf t\in\Rk\setminus\bigcup\limits_{\mathbf s\in H(x)}[s_1,s_1+a]\times \cdots \times [s_k,s_k+a]$, set $g_1(x)(\tt)=f_1(x)(\tt)$.
    \end{itemize}
    As in Claim \ref{equi and perturb}, we may prove that $g_1\in C_{\Rk,h_0}(X;\Lip_{1}(\Rk))$  and $\max\limits_{x\in X}\max\limits_{\mathbf t\in\Rk}|g_1(x)(\tt)-f(x)(\tt)|\leq 4\delta$.  We now prove that $g_1(B)\cap g_1(C)=\emptyset$. Suppose that $x\in B$ and $y\in C$ satisfy $g_1(x)=g_1(y)$. Thus there exist $\mathbf {s, r}\in[-\frac{a}{16},\frac{a}{16}]^k$ such that $\ss x\in B_0,\rr y\in C_0$. Moreover, $\widetilde{q}(\ss x)=\widetilde{q}(\rr y)=1$ and $\mathbf 0\in H(\ss x)\cap H(\rr y)$.
    By the definition of $g_1$, we obtain that  
    \begin{equation*}
        g_1(x)(\mathbf {s+t})=g(\ss x)(\tt)~~\text{and}~~g_1(y)(\mathbf {r+t})=g(\rr y)(\tt)~\text{for}~\mathbf t\in[0,a]^k.
    \end{equation*}
       Set $\mathbf w=\mathbf {r-s}\in[-\frac{a}{8},\frac{a}{8}]^k$. We conclude that for $\tt\in \left[\frac{a}{8},\frac{7a}{8}\right]^k\cap \AA$
   \begin{equation}\label{eq:grry}
       g(\rr y)(\tt)=g_1(y)(\mathbf {r+t})=g_1(y)(\mathbf {s+t+r-s}) =g_1(y)(\mathbf {s+t+w});\,  g(\ss x)(\mathbf {t+w})= g_1(x)(\mathbf {s+t+w}).
           \end{equation}
 It follows from $g_1(x)=g_1(y)$ that $g_1(y)(\mathbf {s+t+w})=g_1(x)(\mathbf {s+t+w})$. Thus from Equation \eqref{eq:grry}, we conclude that for $\tt\in \left[\frac{a}{8},\frac{7a}{8}\right]^k\cap \AA$, $g(\ss x)(\mathbf {t+w})=g(\rr y)(\tt)$. By the third property of $g$, it holds $d(\ss x,\rr y)<\delta$, which contradicts the fact that $\delta<d(B_0,C_0)\leq d(\ss x,\rr y)$. In conclusion, $g_1(B)\cap g_1(C)=\emptyset$ and therefore $g_1\in G(B,C)$. Since $f$ and $\delta$ were chosen arbitrarily, $G(B,C)$ is dense in $C_{\Rk,h_0}(X;\Lip_{1}(\Rk))$.
\end{proof}

\subsection{Main lemma}\label{subsec:main_lemma}

\begin{lem}\label{main lemma}(cf. \cite[Lemmas 2.1 \& 2.5]{GJT19})
    Let $f\in C(X;\Lip_{\tau}([0,a]^k))$ for some $0<\tau<1$ and $a>0$. 
      Then for any $\delta>0$, there exists $\AA \subset (0,a)^k$ a finite non-empty subset and $g\in C(X;\Lip_{1}([0,a]^k))$ such that
   \begin{enumerate}[$(1)$]
   \item  $\max\limits_{x\in X}\max\limits_{\mathbf t\in[0,a]^k}|g(x)(\tt)-f(x)(\tt)|<2\delta$.
   \item For all $x\in X$ and $\tt\in\Edge([0,a]^k)$,  $g(x)(\tt)=f(x)(\tt)$.
     \item $g(x)_{|\AA}$ is a non-constant function for all $x\in X$.
        \item If $x,y\in X$ and $\mathbf w\in[-\frac{a}{8},\frac{a}{8}]^k$ satisfy that for all $\tt \in\left[\frac{a}{8},\frac{7a}{8}\right]^k\cap \AA$\footnote{From the proof it follows $\left[\frac{a}{8},\frac{7a}{8}\right]^k\cap \AA\neq\emptyset$.}  it holds 
   \begin{equation*}
       g(x)(\mathbf{t+w})=g(y)(\tt)
   \end{equation*}
   then $\mathbf w=\mathbf{0}$ and $d(x,y)<\delta$.
   \end{enumerate}
\end{lem}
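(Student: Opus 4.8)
The plan is to follow the scheme of \cite[Lemmas 2.1 and 2.5]{GJT19}, adapted from an interval to the cube $[0,a]^k$, perturbing $f$ only on a finite and very sparse collection of small balls placed at nodes of a fine grid. First I would fix a finite open cover $U_1,\dots,U_N$ of $X$ with $\diam U_i<\delta$ together with continuous weights $\phi_i\colon X\to[0,\tfrac{\delta}{2}]$ with $\supp\phi_i\subset U_i$ and $\sum_i\phi_i\ge c_0>0$ (e.g.\ $\phi_i(\cdot)=\max(0,\tfrac{\delta}{2}-d(\cdot,z_i))$ for a $\tfrac{\delta}{4}$-net $\{z_i\}$); the property we use is the quantitative one: if $|\phi_i(x)-\phi_i(y)|$ is small for every $i$, then $d(x,y)<\delta$. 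Then I would pick parameters $0<\eta\ll\rho\ll\min\{a,\delta,1-\tau\}$, fixed only at the end, let $\AA$ be the $\eta$-grid of $(\tfrac a4,\tfrac{3a}{4})^k$ together with a fixed finite set of marked nodes $\mathbf c_1,\dots,\mathbf c_M\in(\tfrac{3a}{8},\tfrac{5a}{8})^k$, and insist that $g$ equal $f$ off $\bigcup_j B(\mathbf c_j,\rho)$, so that property $(2)$ is automatic and (with $\rho$ small) the modification stays in the interior.

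\textbf{The features.} On each marked ball I would install a ``dip'' produced by a lattice operation, $g(x)(\tt)=\min\!\big(f(x)(\tt),\ f(x)(\mathbf c_j)-h_j(x)+\|\tt-\mathbf c_j\|_2\big)$ on $B(\mathbf c_j,\rho)$. Being a minimum of two $1$-Lipschitz functions this is automatically $1$-Lipschitz, it agrees with $f(x)$ once $\|\tt-\mathbf c_j\|_2\ge h_j(x)/(1-\tau)$ (hence is confined to $B(\mathbf c_j,\rho)$ as long as $h_j(x)<(1-\tau)\rho$), and near $\mathbf c_j$ it is a cone of slope \emph{exactly} $1$, so it is visible against the $\tau$-Lipschitz background no matter how close $\tau$ is to $1$. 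On a few marked balls I set $h_j\equiv h$ constant (``rigidity dips''); on $N$ further marked balls I set $h_j(x)=(1-\tau)\rho\cdot\phi_j(x)/\tfrac{\delta}{2}$ (``information dips''), which carry $\phi_j(x)$. Then $(1)$ and $(2)$ hold by construction once $\rho<2\delta/(1-\tau)$ and the balls avoid $\Edge([0,a]^k)$, and $(3)$ holds because for every $x$ some $\phi_j(x)\ge c_0/N>0$, so the information dip at that $\mathbf c_j$ makes $g(x)(\mathbf c_j)$ strictly smaller than the value of $g(x)$ at a neighbouring grid node, with gap exactly $\eta$ provided $\eta\ll h_j(x)/(1-\tau)$. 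Continuity of $x\mapsto g(x)\in\Lip_1([0,a]^k)$ is clear from continuity of $f$, the $\phi_j$, and the lattice operations.

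\textbf{Property $(4)$.} Suppose $g(x)(\tt+\ww)=g(y)(\tt)$ for all $\tt\in\AA\cap[\tfrac a8,\tfrac{7a}8]^k$ with $\ww\in[-\tfrac a8,\tfrac a8]^k$. Since $g(x),g(y)$ are $1$-Lipschitz and $\AA$ is $\eta$-dense, this upgrades to $|g(x)(\ss+\ww)-g(y)(\ss)|\le 2\eta$ on a neighbourhood of $[\tfrac a4,\tfrac{3a}4]^k$; outside the finitely many $\rho$-balls $g$ coincides with the respective $f$, so $f(x)(\ss+\ww)=f(y)(\ss)+O(\eta)$ there, and by the $\tau$-bound this persists as $O(\tau\rho+\eta)$ near every marked ball. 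Evaluating at the rigidity dips, whose strict slope-$1$ minima sit at grid nodes and must match up through the identity, one forces the shift first to respect the grid and then to vanish, i.e.\ $\ww=\mathbf 0$. With $\ww=\mathbf 0$, evaluating at each information dip gives $g(x)(\mathbf c_j)-g(y)(\mathbf c_j)=[f(x)(\mathbf c_j)-f(y)(\mathbf c_j)]-[h_j(x)-h_j(y)]=O(\eta)$ while $|f(x)(\mathbf c_j)-f(y)(\mathbf c_j)|=O(\tau\rho+\eta)$, hence $|h_j(x)-h_j(y)|=O(\tau\rho+\eta)$ and therefore $|\phi_j(x)-\phi_j(y)|$ is controlled by the same errors for every $j$; for $\tau$ bounded away from $1$ this already yields $d(x,y)<\delta$, and in general it forces one into the finer analysis described next.

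\textbf{Main obstacle.} Everything delicate lies in $(4)$, and the real difficulty is the tension, when $\tau$ is close to $1$, between the required smallness $\|g-f\|<2\delta$ and the constraint that $g$ stay $1$-Lipschitz: a genuinely additive perturbation of a $\tau$-Lipschitz function has slope budget only $1-\tau$, which is too small to survive a shift of size up to $a/8$ and, worse, too small for the difference of two information features to be read off against the $\tau$-variation of $f$ itself. The whole point of the $\min$/$\max$ cone construction is to manufacture slope-$1$ structure whose detectability is governed by $1>\tau$ rather than by the tiny depth $h_j$; arranging the reading error for $h_j(x)-h_j(y)$ to fall genuinely below the encoding scale, and upgrading ``$\ww$ small'' to ``$\ww=\mathbf 0$'' exactly, are the two places where the parameter hierarchy $\eta\ll\rho\ll 1-\tau$ and the rigidity dips (placed at grid nodes in an aperiodic pattern) must be exploited with care, and this is where I would expect to spend essentially all of the work — and, for $k\ge2$, where the ``classical and recent results on the extension of Lipschitz functions'' alluded to in the introduction presumably enter.
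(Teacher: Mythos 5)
Your construction (additive slope-$1$ cone ``dips'' superimposed on $f$) is a genuinely different route from the paper's, but it does not close the two difficulties you yourself flag at the end, and these are exactly the content of the lemma. First, the hypothesis of $(4)$ is exact equality only at the finitely many points of $\left[\frac a8,\frac{7a}8\right]^k\cap\AA$; your upgrade via $1$-Lipschitzness and $\eta$-density of the grid yields only $|g(x)(\ss+\ww)-g(y)(\ss)|\le 2\eta$ on a region, which can localize cone tips to within $O(\eta)$ but can never force $\ww=\mathbf 0$ \emph{exactly}. Concretely, a rigidity dip produces an equation of the form $\|\ww\|_2=f(y)(\mathbf c_j)-f(x)(\mathbf c_j)+O(\eta)$, and combining this with $f(x)(\cdot+\ww)\approx f(y)(\cdot)$ and the $\tau$-Lipschitz bound gives only $\|\ww\|_2\le O(\eta)/(1-\tau)$; nothing in the sketch excludes a tiny nonzero $\ww$ for which the finitely many exact equalities happen to hold. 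Second, your information dips have depth $h_j\lesssim(1-\tau)\rho$, while the unknown background difference $f(x)(\mathbf c_j)-f(y)(\mathbf c_j)$ is controlled only to $O(\tau\rho+\eta)$ from the grid data outside the balls; for $\tau$ close to $1$ the signal is swamped by the noise and you recover only $|\phi_j(x)-\phi_j(y)|=O(\delta\tau/(1-\tau))$, which is useless. You describe this tension accurately in your ``main obstacle'' paragraph but do not resolve it, and no amount of tuning the hierarchy $\eta\ll\rho\ll1-\tau$ removes the $\tau\rho$ term, since it scales with the same $\rho$ as the dip depth.

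The paper sidesteps both problems by \emph{replacing} rather than perturbing: on a finite set $\AA$ of equally spaced collinear points it sets $g(x)(\aa)=\sum_m h_m(x)\mathbf u_m(\aa)$ for a partition of unity $\{h_m\}$ subordinate to a cover with $\diam f(U_m)$ small, and generic vectors $\mathbf u_m$ (close to $f(p_m)|_{\AA}$ but otherwise chosen only so that $\{\mathbf e,\mathbf u_m\}$, their discrete derivatives along $\LL$, and their shifted restrictions are linearly independent); it then rebuilds a $1$-Lipschitz function agreeing with $f$ on $\Edge([0,a]^k)$ via the McShane--Whitney extension, truncation by $f\pm\frac\delta2$, and smoothing off $\DD$. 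Because the values on $\AA$ depend on $x$ only through $(h_1(x),\dots,h_M(x))$, the hypothesis of $(4)$ becomes an exact linear identity in the generic $\mathbf u_m$ with no $f$-dependent error term, and the arithmetic of the collinear grid gives the dichotomy $\ww+\LL\subset\AA$ or $(\ww+\LL)\cap\AA=\emptyset$: the second case is excluded by the independence of the discrete derivatives (using the piecewise-linear/smooth structure off $\AA$), and the first case forces $\ww=\mathbf 0$ and $h_m(x)=h_m(y)$ exactly by the independence of shifted restrictions. To salvage your approach you would need an analogous exactification device; as written the proposal has a genuine gap at the heart of property $(4)$.
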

\begin{proof}
 We start by an outline of the proof. 
The cube $[0,a]^k$ is decomposed into the outer edge $\Edge([0,a]^k)$, and a finite evenly spaced linear grid $\AA$. We also connect by an interval $\LL$ a certain subset of (collinear) points in $\AA$.
 We then perturb $f(x)$ on $\AA$ and $\LL$ in  a specific way aiming  to achieve Properties (3) and (4).  The difficulty lies in changing the values of $f$ outside $\AA$ and $\LL$ without adjusting the values on $\AA$ and $\LL$, however using appropriate Lipschitz function extension theorems this can be done, simultaneously fulfilling Properties (1) and (2).   

    Fix $0<b<c<a$ with $b=a-c<\frac{a}{8}$. Choose an open cover $\{U_1,\ldots,U_M\}$ of $X$ satisfying
    \begin{equation}\label{cover}
        \text{diam}f(U_m)<\frac{\delta}{8},~\forall 1\leq m\leq M.
    \end{equation}
    Fix a point $p_m\in U_m$ for each $m$. Let $N\geq 3$ be a natural number and set
    \begin{equation*}
        \Delta:=\frac{c-b}{N-1}.
    \end{equation*}
 We divide the interval $[b,c]$ into $(N-1)$ intervals of length $\Delta$\label{notation_Delta}:
 \begin{equation*}\label{notation_a_n}
     b=a_1<a_2<\ldots<a_N=c,~~a_{n+1}-a_n=\Delta,  ~~\forall 1\leq n \leq N-1.
 \end{equation*}
 Denote by $a_0:=0, a_{N+1}:=a$ and
     $\vec{\frac{a}{2}}=(\frac{a}{2},\ldots, \frac{a}{2})\in \R^{k-1}$. Set $\mathbf{A}:=\{(\vec{\frac{a}{2}}, a_{r}): 1\leq r\leq N\}$. As $N\geq 3$, $a_2\leq \frac{a}{2}$. Recall $a_1< \frac{a}{8}$.
 Denote by $L,Q$ the unique integers $1\leq L,Q\leq N-1$ such that
 \begin{equation*}\label{notation_L,Q}
     a_L\leq \frac{a}{4}<a_{L+1}\leq a_Q\leq\frac{a}{2}<a_{Q+1}.
 \end{equation*}
Set $\LL:=\mathbf{A}\cap(\{\vec{\frac{a}{2}}\}\times[a_{L+1},\frac{a}{2}])=\{(\vec{\frac{a}{2}}, a_n): L+1\leq n\leq Q\}$.
 Note $\LL\subset \Int([\frac{a}{8}, \frac{7a}{8}]^k)$.
 We define vectors $\mathbf e\in\mathbb{R}^{\mathbf{A}}$ and $\widetilde{\mathbf e}\in\mathbb{R}^{\LL}$ by 
    \begin{equation*}\label{notation_e}
      \mathbf e:=\{\underbrace{1,\ldots,1}_{N}\} \text{ and } \widetilde{\mathbf e}:=\{\underbrace{1,\ldots,1}_{Q-L}\}
    \end{equation*}
    respectively. We choose $N$ large enough such that
    \begin{equation} \label{eq:Delta}
        \Delta<\frac{\delta}{8},\,Q-L\geq 2M.
    \end{equation}
      Since $|\mathbf{A}|=N> Q-L\geq 2M\geq M+1$,
    using Lemma \ref{lem:ap_1}, Lemma \ref{lem:ap_2} and Lemma \ref{lem:Almost sure linear independence}, we may choose $|\mathbf{A}|$-vectors
        $\mathbf u_1,\ldots,\mathbf u_M\in (0,1)^{\mathbf{A}}$
    such that
    \begin{enumerate}[$(1)$]
    \item  $|f(p_m)(\aa)-\mathbf u_m(\aa)|<\eta$ for all $1\leq m\leq M$ and $\aa\in \AA$.
    \item The $(M+1)$ vectors $\mathbf e,\mathbf u_1,\ldots,\mathbf u_M$ in $(0,1)
    ^{\mathbf{A}}$ are linearly independent\footnote{Here one uses $|\AA|\geq M+1$.}. 
     \item  
     For $1\leq m\leq M$ define 
     \begin{equation}\label{eq:D_QL}
    D_{Q-L}\mathbf u_m=(\mathbf u_m(\vec{\frac{a}{2}},a_{L+2})-\mathbf u_m(\vec{\frac{a}{2}},a_{L+1}),\ldots,\mathbf u_m(\vec{\frac{a}{2}},a_{Q+1})-\mathbf u_m(\vec{\frac{a}{2}},a_Q))\in\mathbb{R}^{Q-L}.     
     \end{equation}
         The $(M+1)$ vectors $\widetilde{\mathbf e},D_{Q-L} \mathbf u_1,\ldots,D_{Q-L}\mathbf u_M$ in $\mathbb{R}^{Q-L}$ are linearly independent\footnote{Here one uses $Q-L\geq M+1$.}.
    \item  For any $\mathbf w\in\Rk$ with $\mathbf w+\LL\subset \mathbf{A}$, 
    \begin{equation*}
        \mathbf u_1|_{\LL},\mathbf u_2|_{\LL},\ldots,\mathbf u_M|_{\LL},\mathbf u_1|_{\mathbf w+\LL},\mathbf u_2|_{\mathbf w+\LL},\ldots,\mathbf u_M|_{\mathbf w+\LL}
    \end{equation*}
    are linearly independent in $\mathbb{R}^{\LL}$\footnote{Here one uses $|\LL|=Q-L\geq  2M$.}.
    \end{enumerate}
 Let $\{h_m\}_{m=1}^{M}$ be a partition of unity on $X$ such that $\supp(h_m)\subset U_m$ for each $m$. 
        For $x\in X$ we define $g_0(x):\Edge([0,a]^k)\cup\mathbf{A}\to[0,1]$ as follows.     
        Set $g_0(x)(\tt):=f(x)(\tt)$ if $\mathbf t\in\Edge([0,a]^k)$ and 
        \begin{equation}\label{eq:g_0 on A}
            g_0(x)(\mathbf a):=\sum\limits_{m=1}^{M}h_{m}(x)\mathbf u_m(\mathbf a),\,\,\,\aa \in \AA .
        \end{equation}
  Note that $g_0(x)(\AA)\subset (0,1)$ for all $x\in X$. On the interval from the point $(\vec{\frac{a}{2}}, a_{Q})$ to the point  $(\vec{\frac{a}{2}}, a_{L+1})$,  which corresponds to the convex closure of $\LL$, $\con(\LL)$, extend $g_0(x)$ to a piecewise linear function. That is, for every $\tt\in\con(\LL)\setminus\LL$, there exist unique $L+1\leq n<Q$ and $\mu\in(0,1)$ such that  $\tt=(\vec{\frac{a}{2}},(1-\mu)a_n+\mu a_{n+1})$. Then we define
\begin{equation*}
    g_0(x)(\tt)=(1-\mu)g_0(x)(\vec{\frac{a}{2}},a_{n})+\mu g_0(x)(\vec{\frac{a}{2}},a_{n+1}).
\end{equation*}
 Thus $g_0$ is defined on $\DD:=\Edge([0,a]^k)\cup \mathbf{A}\cup \con(\LL)$. Assume $N=N(f)$ is  big enough so that  $\Delta=\Delta(N,c,b)$ is small enough for Claim \ref{claim:g_0_Lip}. In addition assume  $0<\eta=\eta(\tau)<\frac{\delta}{4}$ as well as $\eta=\eta(\tau)$ is small enough for Claim \ref{claim:g_0_Lip}. By Claim \ref{claim:g_0_distance} and Claim \ref{claim:g_0_Lip} below, $g_0\in C(X;\Lip_{\tau'}(\DD))$ for some $\tau\leq\tau'<1$ and for all $x\in X$ and $\|g_0(x)-f(x)_{|\DD}\|_\infty<\frac{\delta}{2}$.
     \item
     We now use Theorem \ref{thm:Lip_Ext}, the McShane--Whitney extension theorem (which was prefigured by a result of Banach, see \cite[Theorem IV.7.5]{Ban51}; for a full historical account see \cite{gutev2020lipschitz}),  to extend $g_0$ to a function $g_1\in C(X;\Lip_{\tau'}([0,a]^k,\R))$ given
          by the following explicit formula
     \begin{equation*}\label{notation_g1}
        g_1(x)(\tt)=\max\limits_{\mathbf s\in \DD}\big(g_0(x)(\ss)-\tau'\|\mathbf{s-t}\|_2\big).
    \end{equation*}
    \item
Define  $g_2(x):=\min\{g_1(x),f(x)+\frac{\delta}{2}\}$ for $x\in X$. As $\tau'\geq \tau$ by Lemma \ref{lem:max Lip}, $g_2(x)$ is a $\tau'$-Lipschitz function so that $g_2(x)< f(x)+\delta$ and  $(g_2(x))_{|\DD}=(g_1(x))_{|\DD}=(g_0(x))_{|\DD}$. 
Now define $g_3(x):=\max\{g_2(x),f(x)-\frac{\delta}{2}\}$. Again by Lemma \ref{lem:max Lip}, $g_3(x)$ is a $\tau'$-Lipschitz function so that $f(x)-\delta<g_3(x)<f(x)+\delta$ and  $(g_3(x))_{|\DD}=(g_0(x))_{|\DD}$. 
    \item
Fix $x\in X$. By Theorem \ref{thm:Lip_C_infty} applied to $g_3(x):\overline{\Omega}\rightarrow \R$ where $\Omega=(0,a)^k\setminus \DD$ and $\overline{\Omega}=[0,a]^k$, one may find  $1>\tau''>\tau'$, such that the for every $x\in X$ the function  $g_4(x):[0,a]^k\rightarrow \R$ given by the following formula for $\bb\in \Omega$
\begin{equation} \label{eq:g_4}
g_4(x)(\bb)=\int_{\overline{B}} \theta(\tt)g_3(x)(\bb-\rho(\bb)\tt) d\tt
\end{equation}
where $\overline{B}$ is the closed unit ball in $\Rk$ and $\theta, \rho\in C^\infty(\Rk)$, have the following properties for all $x\in X$: (1) $g_4(x)$ is $\tau''$-Lipschitz ; (2)   $g_4(x)\in C^\infty( (0,a)^k\setminus \DD, \R)$; (3) $f(x)-2\delta<g_4(x)< f(x)+2\delta$ and  $(g_4(x))_{|\DD}=(g_3(x))_{|\DD}=(g_0(x))_{|\DD}$. The last property together with Equation \eqref{eq:g_4} imply that  $g_4(x)$ depends continuously on $x$. Define $$g(x):=\max\{0,\min\{1,g_4(x)\}\}.$$
By Lemma \ref{lem:max Lip},   $g\in C(X;\Lip_1([0,a]^k))$. As for all $x\in X$, $f(x)([0,a]^k)\subset [0,1]$,   $\|g(x)-f(x)\|_\infty<2\delta$ which verifies the first property of $g$.
As for all $x\in X$, $g_0(x)(\DD)\subset [0,1]$, $g(x)_{|\DD}=(g_4(x))_{|\DD}=(g_0(x))_{|\DD}$. In particular  $g(x)(\tt)=f(x)(\tt)$ ($\tt\in\Edge([0,a]^k)$) for all $x\in X$ which verifies the second property of $g$.

We proceed to the proof of the third property of $g$ in the statement.
       For each $x\in X$, the function $g(x):[0,a]^k\to[0,1]$ is a non-constant function because $g(x)|_{\mathbf{A}}=g_0(x)|_{\mathbf{A}}=\sum\limits_{m=1}^{M}h_{m}(x)\mathbf u_m\notin\mathbb{R}\mathbf e$ by Property $(2)$ of $\mathbf u_m$. 
    
Finally, we verify the forth property of $g$.
Suppose that there exist $x,y\in X$ and $\mathbf w\in[-\frac{a}{8},\frac{a}{8}]^k$ such that $g(x)(\mathbf {t+w})=g(y)(\tt)$ for all $\tt\in[\frac{a}{8},\frac{7a}{8}]^k\cap \AA$. 
First, we shall prove that $\mathbf w+\LL\subset \mathbf{A}$. Note that as $\LL\subset [\frac{a}{4},\frac{a}{2}]^k$, $\ww+\LL\subset [\frac{a}{8},\frac{5a}{8}]^k$. Therefore if there exists $\lambda\in \LL$, so that $\ww+\ll\in \AA$, then $\ww+\LL\subset \AA$.   
Thus if $\mathbf w+\LL\subset \mathbf{A}$ does not hold then $(\mathbf w+\LL)\cap \mathbf{A}=\emptyset$. Assume for a contradiction the latter. Note that by construction $\LL\subset [\frac{a}{8},\frac{7a}{8}]^k$ as $a_L\leq \frac{a}{4}<a_{L+1}\leq a_Q\leq\frac{a}{2}<a_{Q+1}$. 
Thus for all $\mathbf t=(\vec{\frac{a}{2}}, a_n)\in\LL$ with $L+1\leq n\leq Q-1$, $(0,1)\ni g_4(y)(\tt)=g(y)(\tt)=g(x)(\tt+\mathbf {w})$. As $\mathbf {t+w}\in (0,a)^k\setminus\mathbf{A}$ there are two possibilities. Either $\mathbf {t+w}\in (0,a)^k\setminus\mathbf{D}$ or $\mathbf {t+w}\in \con(\LL)\setminus\mathbf{A}$. In the first case $g_4(x)(\tt+\mathbf {w})=g(x)(\tt+\mathbf {w})\in (0,1)$, so $g(y)$ is differentiable at $\mathbf t$ (as it equals $g_4(y)$ in a neighborhood of $\tt$). In the second case\footnote{Recall $g(y)$ is piecewise linear on $\con(\LL)$.} $\tt$ is not a discontinuity point for the derivative of $g(y)$ in the direction $t_k$ (the last coordinate). Thus in both case we may conclude  $\tt$ is not a discontinuity point for the derivative of $g(y)$ in the direction $t_k$  at  $\mathbf t$ and therefore the left hand derivative and right hand derivative in the direction $t_k$  must equal\footnote{As $L+1\leq n\leq Q-1$, $\tt$ is not a boundary point of the interval $\con(\LL)$.}:
    
    \begin{equation*}
        g(y)(\vec{\frac{a}{2}},a_{n+1})-g(y)(\vec{\frac{a}{2}},a_n)=g(y)(\vec{\frac{a}{2}},a_{n+2})-g(y)(\vec{\frac{a}{2}},a_{n+1}),~\forall L+1\leq n\leq Q-1,
    \end{equation*}
    and therefore using the definition of $g(y)$ at $\AA$
    \begin{equation*}
    \begin{split}
        \sum\limits_{m=1}^{M}h_{m}(y)(\mathbf u_m(\vec{\frac{a}{2}},a_{n+1})-\mathbf u_m(\vec{\frac{a}{2}},a_n))=\sum\limits_{m=1}^{M}h_{m}(y)(\mathbf u_m(\vec{\frac{a}{2}},a_{n+2})&-\mathbf u_m(\vec{\frac{a}{2}},a_{n+1})),\\
        &~\forall L+1\leq n\leq Q-1.
    \end{split}
    \end{equation*}
    Recall  the definition of $D_{Q-L}$ in Equation \eqref{eq:D_QL}.
   Denoting $D_{Q-L}\mathbf u_m=\big((D_{Q-L}\mathbf u_m)_1,\ldots,  (D_{Q-L}\mathbf u_m)_{Q-L} \big)$ one may write the equations above as 
    $$
\sum\limits_{m=1}^{M}h_{m}(y)(D_{Q-L}\mathbf u_m)_1=\sum\limits_{m=1}^{M}h_{m}(y)(D_{Q-L}\mathbf u_m)_2=\cdots=\sum\limits_{m=1}^{M}h_{m}(y)(D_{Q-L}\mathbf u_m)_{Q-L},
    $$
    equivalently $\sum\limits_{m=1}^{M}h_{m}(y)D_{Q-L}\mathbf u_m\in\mathbb{R}\widetilde{\mathbf e}$ (recall $\widetilde{\mathbf e}=\{\underbrace{1,\ldots,1}_{Q-L}\}$), which contradicts  Property $(3)$ of $\mathbf u_m$. Thus $\mathbf w+\LL\subset \mathbf{A}$. 
        Since $g(x)(\mathbf {t+w})=g(y)(\tt), \forall \mathbf t\in[\frac{a}{8},\frac{7a}{8}]^k\cap \AA$, it holds by Equation \eqref{eq:g_0 on A}
    \begin{equation*}
        \sum\limits_{m=1}^{M}h_{m}(x)\mathbf u_m|_{\mathbf w+\LL}=\sum\limits_{m=1}^{M}h_{m}(y)\mathbf u_m|_{\LL}.
    \end{equation*}
    Then by Property $(4)$ of $u_m$, it holds $\mathbf w=\mathbf{0}$ and $h_{m}(x)=h_{m}(y)$ for each $1\leq m\leq M$. Therefore $x,y\in U_m$ for some $m$ and thus $d(x,y)\leq\text{diam}(U_m)<\delta$. Q.E.D.
    
\end{proof}

\begin{claim}\label{claim:g_0_distance}
If $\eta<\frac{\delta}{4}$, then for all $x\in X$, $\|g_0(x)-f(x)_{|\DD}\|_\infty<\frac{\delta}{2}$.
\end{claim}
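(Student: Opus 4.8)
The plan is to bound $|g_0(x)(\tt)-f(x)(\tt)|$ uniformly in $x\in X$ by examining $\tt$ on each of the three pieces whose union is $\DD$: the outer boundary $\Edge([0,a]^k)$, the grid $\mathbf{A}$, and the relative interior $\con(\LL)\setminus\LL$ of the interpolation segment (recall $\LL\subset\mathbf{A}$).

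On $\Edge([0,a]^k)$ there is nothing to prove, since by construction $g_0(x)$ was set equal to $f(x)$ there. On $\mathbf{A}$ I would fix $x\in X$ and $\aa\in\mathbf{A}$ and use that $\{h_m\}_{m=1}^{M}$ is a partition of unity, so that $g_0(x)(\aa)-f(x)(\aa)=\sum_{m=1}^{M}h_m(x)\bigl(\mathbf u_m(\aa)-f(x)(\aa)\bigr)$. For every $m$ with $h_m(x)>0$ one has $x\in\supp(h_m)\subset U_m$, and since also $p_m\in U_m$, Equation \eqref{cover} gives $|f(x)(\aa)-f(p_m)(\aa)|\le\diam f(U_m)<\delta/8$, while $|f(p_m)(\aa)-\mathbf u_m(\aa)|<\eta$ by Property $(1)$ of the vectors $\mathbf u_m$. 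Together with the hypothesis $\eta<\delta/4$ this yields $|\mathbf u_m(\aa)-f(x)(\aa)|<3\delta/8$ for every such $m$, and averaging with the weights $h_m(x)$ gives $|g_0(x)(\aa)-f(x)(\aa)|<3\delta/8<\delta/2$.

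The remaining, and only slightly delicate, case is $\tt\in\con(\LL)\setminus\LL$, where $g_0(x)$ is piecewise linear along the segment but $f(x)$ need not be. Here I would write $\tt=(\vec{\frac{a}{2}},(1-\mu)a_n+\mu a_{n+1})$ with $L+1\le n\le Q-1$ and $\mu\in(0,1)$, so that the two endpoints $(\vec{\frac{a}{2}},a_n)$ and $(\vec{\frac{a}{2}},a_{n+1})$ lie in $\LL\subset\mathbf{A}$ and are at Euclidean distance $\mu\Delta$, respectively $(1-\mu)\Delta$, from $\tt$. Combining the convex-combination formula defining $g_0$ on $\con(\LL)$, the triangle inequality, the bound $3\delta/8$ just obtained at the two grid endpoints, and the $\tau$-Lipschitz estimate $|f(x)(\vec{\frac{a}{2}},a_n)-f(x)(\tt)|\le\tau\mu\Delta$ (and its counterpart at $a_{n+1}$), one obtains $|g_0(x)(\tt)-f(x)(\tt)|<\tfrac{3\delta}{8}+\tau\Delta$. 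Since $\tau<1$ and $N$ was chosen large enough that $\Delta<\delta/8$ (Equation \eqref{eq:Delta}), this is $<\tfrac{3\delta}{8}+\tfrac{\delta}{8}=\tfrac{\delta}{2}$. Taking the maximum over the three pieces yields $\|g_0(x)-f(x)_{|\DD}\|_\infty<\tfrac{\delta}{2}$ for every $x\in X$. The main obstacle is precisely this last step — reconciling the linear behaviour of $g_0$ on $\con(\LL)$ with the merely Lipschitz behaviour of $f$ — which is why the mesh $\Delta$ must be kept small in addition to $\eta$.
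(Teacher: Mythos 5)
Your proposal is correct and follows essentially the same argument as the paper: the boundary case is trivial, the grid case uses the triangle inequality through $f(p_m)$ together with Property $(1)$ of the $\mathbf u_m$, Equation \eqref{cover} and the partition of unity, and the interpolated segment is handled by writing $\tt$ as a convex combination of adjacent grid points and absorbing the extra $\tau\Delta$ via $\Delta<\delta/8$. The only cosmetic difference is that the paper merges the grid and segment cases into a single convex-combination estimate (allowing $\mu\in\{0,1\}$) rather than treating them separately.
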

\begin{proof}
   
   Fix $\tt\in \mathbf{A}\cup \con(\LL)$. Then there exist unique $\aa, \bb\in \AA$ with $\|\aa-\bb\|_2=\Delta$ and $0\leq\mu\leq 1$ so that $\tt=\mu \aa +(1-\mu)\bb$. For $x\in X$ it holds by property (1) of $\mathbf u_m$, the assumption $\eta<\frac{\delta}{4}$,   the fact that $f(x)$ is $\tau$-Lipschitz, $\diam f(U_m)<\frac{\delta}{8}$ (see Equation \eqref{cover}) and $\Delta< \frac{\delta}{8}$ (see Equation \eqref{eq:Delta}):

        \begin{equation*}
            \begin{split}
            &|\mathbf u_m(\mathbf{a})-f(x)(\mathbf{t})|\leq|\mathbf u_m(\mathbf{a})-f(p_m)(\mathbf{a})|+|f(p_m)(\mathbf{a})-f(p_m)(\mathbf{t})|+|f(p_m)(\mathbf{t})-f(x)(\mathbf{t})|\\
            &\leq \eta +\tau(1-\mu)\Delta+\frac{\delta}{8}< \frac{\delta}{4}+\frac{\delta}{8}+\frac{\delta}{8}=\frac{\delta}{2}.
            \end{split}
            \end{equation*}
         \noindent   
        Similarly  $|\mathbf u_m(\mathbf{b})-f(x)(\mathbf{t})|<\frac{\delta}{2}$. Thus for $x\in X$ one may bound $|g_0(x)(\tt)-f(x)(\mathbf{t})|=|\mu g_0(x)(\mathbf{a})+ (1-\mu) g_0(x)(\mathbf{b})-f(x)(\mathbf{t})|$ from above by 
        \begin{equation*}
            \begin{split}
                &\mu\sum\limits_{m=1}^{M}h_{m}(x)|\mathbf u_m(\mathbf{a})-f(x)(\mathbf{t})|+(1-\mu)\sum\limits_{m=1}^{M}h_{m}(x)|\mathbf u_m(\mathbf{b})-f(x)(\mathbf{t})|\\
                &<\mu\frac{\delta}{2}+(1-\mu)\frac{\delta}{2}=\frac{\delta}{2}.
            \end{split}
        \end{equation*}
 We conclude $\|g_0(x)-f(x)_{|\DD}\|_\infty<\frac{\delta}{2}$ for all $x\in X$. 
\end{proof}

\begin{claim}\label{claim:g_0_Lip}
If $\eta=\eta(\tau)>0$ and $\Delta=\Delta(N,b,c)>0$ are small enough, then  $g_0\in C(X;\Lip_{\tau'}(\DD))$ for some $\tau\leq\tau'<1$.
\end{claim}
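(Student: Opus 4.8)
The plan is to verify directly the Lipschitz estimate $|g_0(x)(\tt)-g_0(x)(\ss)|\le \tau'\|\tt-\ss\|_2$ for all $x\in X$ and all $\tt,\ss\in\DD=\Edge([0,a]^k)\cup\AA\cup\con(\LL)$, with $\tau':=\tau+\kappa$ for a suitably small $\kappa>0$, and then to check that $x\mapsto g_0(x)$ is continuous into $(\Lip_{\tau'}(\DD),\|\cdot\|_\infty)$. Recall that $g_0(x)=f(x)$ on $\Edge([0,a]^k)$, that $g_0(x)=\sum_{m=1}^M h_m(x)\mathbf u_m$ on the finite grid $\AA\subset\{\vec{\tfrac a2}\}\times[b,c]$, and that $g_0(x)$ is the piecewise-linear interpolation of these grid values along $\con(\LL)$. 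I would split the verification into three cases according to where $\tt$ and $\ss$ lie.

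If both points are on $\Edge([0,a]^k)$ there is nothing to prove, since there $g_0(x)=f(x)\in\Lip_\tau([0,a]^k)$ and $\tau'\ge\tau$. If both points lie on the middle segment $\AA\cup\con(\LL)$, then $\tt=(\vec{\tfrac a2},t_k)$, $\ss=(\vec{\tfrac a2},s_k)$ with $\|\tt-\ss\|_2=|t_k-s_k|$; for two grid nodes $\aa,\bb\in\AA$, property $(1)$ of the $\mathbf u_m$ together with the $\tau$-Lipschitz property of each $f(p_m)$ gives $|\mathbf u_m(\aa)-\mathbf u_m(\bb)|\le 2\eta+\tau\|\aa-\bb\|_2$, whence $|g_0(x)(\aa)-g_0(x)(\bb)|\le 2\eta+\tau\|\aa-\bb\|_2$. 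Since distinct nodes are $\Delta$-separated, $g_0(x)$ extends to a piecewise-linear function on the full segment $\{\vec{\tfrac a2}\}\times[b,c]$ whose consecutive-node slopes are all bounded by $(2\eta+\tau\Delta)/\Delta=\tau+2\eta/\Delta$; hence $g_0(x)$ is $(\tau+2\eta/\Delta)$-Lipschitz on $\AA\cup\con(\LL)$, and $\tau+2\eta/\Delta<1$ as soon as $\eta<\Delta(1-\tau)/2$.

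The remaining case — $\tt\in\Edge([0,a]^k)$ and $\ss\in\AA\cup\con(\LL)$ — is where the real work lies, and it rests on the geometric fact that $d\big(\AA\cup\con(\LL),\Edge([0,a]^k)\big)\ge\min\{\tfrac a2,b\}=b>0$, valid because every point of $\AA\cup\con(\LL)$ has its first $k-1$ coordinates equal to $\tfrac a2$ and its last coordinate in $[b,a-b]$. Writing $g_0(x)(\ss)$ as a convex combination of grid values at one or two adjacent nodes and combining property $(1)$ of the $\mathbf u_m$, the estimate $|f(p_m)(\cdot)-f(x)(\cdot)|<\diam f(U_m)$ on $U_m$, and a one-cell $\tau$-Lipschitz bound $\tau\Delta$, one obtains $|g_0(x)(\ss)-f(x)(\ss)|\le\rho$ with $\rho:=\eta+\diam f(U_m)+\tau\Delta$; if necessary one refines the open cover so that $\diam f(U_m)$, rather than merely being $<\delta/8$, is small in terms of $b$ and $\tau$. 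Then for $\tt\in\Edge([0,a]^k)$, $|g_0(x)(\tt)-g_0(x)(\ss)|\le|f(x)(\tt)-f(x)(\ss)|+\rho\le(\tau+\rho/b)\|\tt-\ss\|_2$, which is $<1$ once $\rho<b(1-\tau)$. Choosing $\Delta$ (equivalently $N$) small, the cover fine, and then $\eta$ small relative to $\Delta$ and $\tau$, the three cases combine to give $g_0(x)\in\Lip_{\tau'}(\DD)$ with $\tau':=\tau+\max\{2\eta/\Delta,\rho/b\}\in[\tau,1)$. Continuity of $x\mapsto g_0(x)$ is then routine: on $\Edge([0,a]^k)$ it equals $f(x)$, continuous by hypothesis; on the finite set $\AA$ it is $\sum_m h_m(x)\mathbf u_m$ with the $h_m$ continuous; and on $\con(\LL)$ it is the continuous linear interpolation of these; so the convergence is uniform on the compact set $\DD$.

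I expect the main obstacle to be the bookkeeping of the parameters across the last two cases. A small node spacing $\Delta$ is needed — it is also required small for Claim~\ref{claim:g_0_distance} — but a small $\Delta$ inflates the $\eta$-sized perturbations of the grid values into a slope error of order $\eta/\Delta$ along $\con(\LL)$, which is why $\eta$ must be chosen after, and much smaller than, $\Delta$. In the mixed case the total deviation $\rho$ of $g_0(x)$ from $f(x)$ on the middle segment must be kept below the gap $b(1-\tau)$, and this is precisely where the strictly positive separation $b$ between $\AA\cup\con(\LL)$ and the edge, together with a sufficiently fine cover, is used.
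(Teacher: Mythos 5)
Your proof is correct, and it follows the same overall case decomposition as the paper (pairs within $\Edge([0,a]^k)$, pairs within $\AA\cup\con(\LL)$, and mixed pairs), resting on the same two geometric facts: the $\Delta$-separation of the grid nodes and the positive distance $b$ from $\AA\cup\con(\LL)$ to $\Edge([0,a]^k)$. The one genuine difference lies in the mixed case: the paper first establishes the Lipschitz bound on $\Edge([0,a]^k)\cup\AA$ and then transfers it to the points of $\con(\LL)$ via an abstract ``nearby anchor'' lemma (Lemma \ref{lem:Lipschitz constant continuity}, applied with $\delta=\Delta/d(\AA,\Edge([0,a]^k))$ and $\epsilon=\Delta\tau'/d(\AA,\Edge([0,a]^k))$), whereas you bound the uniform deviation $|g_0(x)(\ss)-f(x)(\ss)|\le\rho$ on the whole middle segment --- essentially re-using the computation of Claim \ref{claim:g_0_distance} --- and then let the separation $b$ absorb $\rho$ against the $\tau$-Lipschitz bound for $f(x)$. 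Both routes are valid; yours is more self-contained and has the merit of making explicit two points the paper glosses over: first, that $\eta$ must be chosen small relative to $\Delta$ (the slope error between adjacent nodes is of order $\eta/\Delta$, so the paper's notation $\eta=\eta(\tau)$ understates the dependence; the paper's order of choices, $N$ before $\eta$, does permit this), and second, that the term $\diam f(U_m)<\delta/8$ enters the mixed-case estimate and must be dominated by $b(1-\tau)$ --- which you address by refining the cover, and which can equivalently be handled by assuming $\delta$ small without loss of generality.
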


\begin{proof}
It is clear $g_0$ is continuous in $x\in X$ and for all $x\in X$, $g_0(x)$ is a function taking values in $[0,1]$. Recall $\eta$ appears only in property (1) of $\mathbf u_m$.
Choosing $\eta=\eta(\tau)>0$  small enough and using the fact that $d_{\|\cdot\|_2}(\AA,\Edge([0,a]^k))>0$ one concludes  that $(g_0(x))_{|\Edge([0,a]^k)\cup \mathbf{A}}$ is $\tau'$-Lipschitz for some $1>\tau'\geq \tau$ for all $x\in X$.   By the piecewise linear construction it is clear that $(g_0(x))_{|\AA\cup\con(\LL)}$ is $\tau'$-Lipschitz as the concatenation of two $\tau'$-Lipschitz functions is $\tau'$-Lipschitz\footnote{That is if $f: [a, b] \to \mathbb{R}$ and $g: [b, c] \to \mathbb{R}$ are both $\tau'$-Lipschitz so that $f(b) = g(b)$ then the concatenated function $h: [a, c] \to \mathbb{R}$ is $\tau'$-Lipschitz.}.
In order to show $g_0(x)$ is $\tau''$-Lipschitz on $\DD=\Edge([0,a]^k)\cup \mathbf{A}\cup \con(\LL)$ for some $1>\tau''\geq \tau'$, we denote $A=\AA$, $E=\Edge([0,a]^k)$ and $T=\con(\LL)$. We  need to show that the conclusion of 
 Lemma \ref{lem:Lipschitz constant continuity} holds for $g_0(x)$ for all $x\in X$ with some $\delta, \epsilon>0$ so that $\tau''=\frac{\tau'+\epsilon}{1-\delta}$.  Obviously the first condition holds w.r.t.\ $\tau'$. In the second condition one may take
 $$\delta=\frac{\Delta}{d_{\|\cdot\|_2}(\AA,\Edge([0,a]^k))},\, \epsilon=\frac{\Delta \tau'}{d_{\|\cdot\|_2}(\AA,\Edge([0,a]^k))}.
 $$
Thus in order to achieve $\delta,\epsilon>0$ small one makes $\Delta$ small enough by enlarging $N$ as needed.  
 
\end{proof}

\appendix
\renewcommand{\appendixname}{Appendix~\Alph{section}}
\section{Existence of local sections}
\indent\indent In order to prove Theorem \ref{thm:local section}, we will reduce it to a statement from  \cite{HM66}\footnote{Note Gleason's famous result in  \cite{Gle50} concerns  local cross-sections in the case of compact Lie group actions so it does not apply to $\R^k$-actions.}. As  Hofmann and Mostert  worked in quite a general setting, one has to show it applies in the setting of this article. For simplicity we adapt the definitions  of \cite[Appendix II]{HM66} to the abelian Lie action setting.   

\begin{defn}\cite[Appendix II, Definition 1.1]{HM66}\label{local space}
    Let $G$ be an abelian Lie group. A \textbf{local $G$-space} $(X,G)_{\textrm{loc}}$ is a pair consisting of a completely regular space $X$ and a continuous map $A:D(A)\to X$ whose domain $D(A)$ is an open neighborhood of
     $\{\mathbf{0}\} \times X $ in $ G \times X$
    which satisfies the following two conditions:
    \begin{enumerate}[$(a)$]
    
\item $A(\mathbf{0},x)=x$ for all $x\in X$.
    
    \item If $(g,x),A(h,A(g,x)),A(hg,x)\in D(A)$, then $A(hg,x)=A(h,A(g,x))$.
        \end{enumerate}

Two local $G$-spaces $(X,A)_{\textrm{loc}}$ and $(X,B)_{\textrm{loc}}$ are said to be \textbf{equivalent} if there exists $U$ an open neighborhood of $\{\mathbf{0}\} \times X$ such that $U\subset D(A)\cap D(B)$ and $A_{|U}=B_{|U}$. This gives rise to an equivalence relation whose classes are denoted by $[(X,A)_{\textrm{loc}}]$. 
 \end{defn}
If no confusion arises we write $gx$ instead of $A(g,x)$. Clearly any t.d.s.\ $(X,G)$ is a local $G$-space. 

\begin{defn}\cite[Appendix II, Definition 1.9]{HM66}\label{def:isotropy_group}
    Let $(X,A)_{\textrm{loc}}$ be a local $G$-space for an abelian Lie group $G$. For $x_0\in X$, let $G_{x_0}^{(X,A)_{\textrm{loc}}}$ be the intersection of all subgroups of $G$ (algebraically) generated by the set of all $g\in G$ with $(g,x_0)\in D(B)$ and $B(g,x_0)=x_0$, where $(X,B)_{\textrm{loc}}$ ranges through the members of $[(X,A)_{\textrm{loc}}]$. 
\end{defn}

\begin{prop}
    Let $G$ be an abelian Lie group. Let $(X,G)$ be a t.d.s.\ and  $x_0\in X$. Then $G_{x_0}^{(X,G)}=G_{x_0}^\circ$, the identity component of $G_{x_0}:=\{g\in G|\,\, gx_0=x_0\}$. 
\end{prop}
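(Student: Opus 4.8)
The plan is to prove the two inclusions $G_{x_0}^{(X,G)}\supseteq G_{x_0}^\circ$ and $G_{x_0}^{(X,G)}\subseteq G_{x_0}^\circ$ separately. Throughout write $H:=G_{x_0}=\{g\in G\mid gx_0=x_0\}$, a closed subgroup of $G$. The one external input is that $H^\circ=G_{x_0}^\circ$ is \emph{open} in $H$: since $G$ is a Lie group, $H$ is an embedded Lie subgroup by Cartan's closed subgroup theorem, in particular a manifold, hence locally connected, so its identity component is open. (When $G=\mathbb R^k$, as in the application, this is elementary, since a closed subgroup of $\mathbb R^k$ is ``linear subspace $+$ lattice'' with the subspace as identity component.) Fix a metric $d$ on $G$ compatible with the topology and write $B_r:=\{g\in G\mid d(g,\mathbf 0)<r\}$, so the $B_r$ form a neighborhood basis at $\mathbf 0$. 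Regard the t.d.s.\ $(X,G)$ as the local $G$-space given by its global action $A$ on $D(A)=G\times X$, and for a local $G$-space $(X,C)_{\textrm{loc}}$ put $\Sigma_C:=\{g\in G\mid (g,x_0)\in D(C),\ C(g,x_0)=x_0\}$, so that $G_{x_0}^{(X,G)}=\bigcap_C\langle\Sigma_C\rangle$ with $C$ ranging over $[(X,A)_{\textrm{loc}}]$.

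For $G_{x_0}^\circ\subseteq G_{x_0}^{(X,G)}$: let $(X,C)_{\textrm{loc}}$ be any member of $[(X,A)_{\textrm{loc}}]$, so there is an open $U\supseteq\{\mathbf 0\}\times X$ with $U\subseteq D(A)\cap D(C)$ and $A|_U=C|_U$. Since $U$ is open and contains $(\mathbf 0,x_0)$, there is $r>0$ with $B_r\times\{x_0\}\subseteq U$; then for $g\in B_r$ we have $(g,x_0)\in D(C)$ and $C(g,x_0)=A(g,x_0)=gx_0$, so $H\cap B_r\subseteq\Sigma_C$, and in particular $H^\circ\cap B_r\subseteq\Sigma_C$. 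As $H^\circ$ is a connected topological group it is generated by the identity neighborhood $H^\circ\cap B_r$, hence $H^\circ\subseteq\langle\Sigma_C\rangle$. Since $C$ was arbitrary, $H^\circ\subseteq\bigcap_C\langle\Sigma_C\rangle=G_{x_0}^{(X,G)}$.

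For the reverse inclusion it suffices to exhibit a \emph{single} member of $[(X,A)_{\textrm{loc}}]$ whose associated subgroup lies in $H^\circ$. Using that $H^\circ$ is open in $H$, choose $r_0>0$ with $H\cap B_{r_0}\subseteq H^\circ$, and define $(X,C)_{\textrm{loc}}$ by $D(C):=B_{r_0}\times X$ (an open neighborhood of $\{\mathbf 0\}\times X$) and $C:=A|_{D(C)}$. Then $C$ is a local $G$-space: axiom (a) is clear, and for (b), if $(g,x),(h,C(g,x)),(hg,x)$ all lie in $D(C)\subseteq D(A)$, then, $A$ being a genuine action, $C(hg,x)=A(hg,x)=A(h,A(g,x))=C(h,C(g,x))$; moreover $(X,C)_{\textrm{loc}}$ is equivalent to $(X,A)_{\textrm{loc}}$, witnessed by $U:=D(C)$. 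Now $\Sigma_C=\{g\in B_{r_0}\mid gx_0=x_0\}=H\cap B_{r_0}\subseteq H^\circ$, so $\langle\Sigma_C\rangle\subseteq H^\circ$, whence $G_{x_0}^{(X,G)}=\bigcap_{C'}\langle\Sigma_{C'}\rangle\subseteq\langle\Sigma_C\rangle\subseteq H^\circ$. Combining the two inclusions gives $G_{x_0}^{(X,G)}=H^\circ=G_{x_0}^\circ$.

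I do not expect a genuine obstacle here: once the fact ``$G_{x_0}^\circ$ is open in $G_{x_0}$'' is in hand, the argument is formal. The only points requiring any care are the two bookkeeping verifications --- that the restriction $A|_{B_{r_0}\times X}$ really satisfies the local-$G$-space axioms and is equivalent to $A$, and that a connected topological group is generated by any identity neighborhood --- both of which are routine, and the latter is standard.
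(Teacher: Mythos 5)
Your proof is correct and follows essentially the same route as the paper: both inclusions come from (i) the fact that a connected topological group is generated by any identity neighborhood, applied to $G_{x_0}^\circ$ intersected with the domain of an arbitrary member of the equivalence class, and (ii) the openness of $G_{x_0}^\circ$ in $G_{x_0}$ (the paper phrases this via the Lie group structure, you via Cartan's closed subgroup theorem), applied to the restricted local $G$-space $A|_{V\times X}$ for a small neighborhood $V$ of $\mathbf 0$. Your write-up is in fact more explicit than the paper's rather terse argument, in particular in verifying that the restriction is a legitimate member of $[(X,A)_{\textrm{loc}}]$.
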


\begin{proof}
    As the definition of $G_{x_0}^{(X,G)}$ involves an intersection it is enough to consider subgroups of $G$ generated by the set of all $g\in V$ with $gx_0=x_0$ $g\in V$, where $V$  ranges over open subsets of an arbitrary fixed open neighborhood $V_0$ of $\mathbf{0}\in G$. It is well known that a connected topological group is generated by any open neighborhood of the unit element, so for every $V$ as above the group generated must contain $G_{x_0}^\circ$.  As $G$ is a Lie group the identity component of $G$ is open. We thus consider $V_0=G_{x_0}^\circ$  to conclude  $G_{x_0}^{(X,G)}\subset G_{x_0}^\circ$. Combining the last two facts it holds  $G_{x_0}^{(X,G)}= G_{x_0}^\circ$.
\end{proof}
\begin{defn}\label{def:local cross section}(\cite[Appendix II, Definitions 1.10 \& 1.5 for certain t.d.s.]{HM66}\footnote{This definition should not be confused with Definition \ref{def:local section}.} )
Let $G$ be an abelian Lie group. Let $(X,G)$ be a t.d.s.\ and $x\in X$.  A \textbf{local cross section to the local orbits
at $x$} is a triple $(C, K; U)$ where 
\begin{itemize}
    \item 
$K$ is a closed set containing  $\mathbf{0}\in G$, so that there exists $K'\subset  G_{x}^\circ$, a compact symmetric
neighborhood (w.r.t.\ $ G_{x}^\circ$) of  $\mathbf{0}\in G_{x}^\circ$ such that $K'K$ is a neighborhood of $\mathbf{0}\in G$ and such that the multiplication map $K'\times K\rightarrow K'K$, $(k',k)\mapsto k'k$ is a homeomorphism; 
\item 
 $U$ is a closed neighborhood of $x$ in $X$;
 \item 
 $C$ is a closed subset of $U$ containing $x$, such that the map 
\begin{equation}
        K\times C\to U,~~(g,x)\mapsto g x
    \end{equation}
    is a homeomorphism onto $U$.
\end{itemize}
 
\end{defn}

\begin{thm}\cite[Appendix II, Theorem 1.11 for certain t.d.s.]{HM66}\label{existence of local cross sections}
   Let $G$ be an abelian Lie group. Let $(X,G)$ be a t.d.s.\ and  $x_0\in X$. Suppose $V$ is a neighborhood of $x_0$ and $W$ a neighborhood of $\mathbf{0}$ in $G$ such that the connected component of $\mathbf{0}$ of $G_{x_0}^\circ \cap W$ equals  for each $y\in V$ the connected component of $\mathbf{0}$ of $G_y^\circ\cap W$. Then there exists a local cross section to the local orbits at $x_0$.
    
    \end{thm}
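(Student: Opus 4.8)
The plan is to obtain the statement as the specialisation to genuine topological dynamical systems of \cite[Appendix~II, Theorem~1.11]{HM66}, so the work consists entirely of checking that the hypotheses of that more general theorem are present in our setting and that its conclusion is the one we have stated. First I would observe that any t.d.s.\ $(X,G)$ with $G$ an abelian Lie group is in particular a local $G$-space in the sense of Definition~\ref{local space}: one takes $D(A)=G\times X$ and $A(g,x)=gx$, so that conditions $(a)$ and $(b)$ are nothing but the two axioms of an action; and since $X$ is compact metrizable it is normal, hence completely regular, as required of the underlying space. Thus the machinery of \cite[Appendix~II]{HM66} applies verbatim to $(X,G)$.

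Next I would transport the hypothesis and the conclusion. The hypothesis of \cite[Appendix~II, Theorem~1.11]{HM66} is a local-constancy-type condition on the effective isotropy groups $G_y^{(X,G)}$ of Definition~\ref{def:isotropy_group} in a neighbourhood of $x_0$. By the Proposition preceding Definition~\ref{def:local cross section} — which identifies $G_y^{(X,G)}$ with $G_y^{\circ}$, the identity component of the stabiliser $G_y=\{g\in G:gy=y\}$, using that $G$ is a Lie group so that $G_y^{\circ}$ is open in $G_y$ — this condition is precisely the one displayed in the statement: the existence of $V\ni x_0$ and $W\ni\mathbf 0$ such that the $\mathbf 0$-component of $G_{x_0}^{\circ}\cap W$ equals the $\mathbf 0$-component of $G_y^{\circ}\cap W$ for every $y\in V$. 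Likewise, a local cross section to the local orbits produced by \cite[Theorem~1.11]{HM66} is, after unwinding its definition, exactly a triple $(C,K;U)$ as in Definition~\ref{def:local cross section}: here $K$ is the transversal slice in $G$ complementary — through the homeomorphism $K'\times K\to K'K$ with $K'\subset G_{x_0}^{\circ}$ — to a neighbourhood of $\mathbf 0$ in the isotropy direction, and $(g,x)\mapsto gx$ carries $K\times C$ homeomorphically onto the closed neighbourhood $U$ of $x_0$. Since $G_{x_0}^{\circ}=G_{x_0}^{(X,G)}$, the two notions of slice coincide, and the theorem follows.

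The step that I expect to demand the most care is not any single computation but the bookkeeping in the previous paragraph: \cite[Appendix~II]{HM66} develop their theory for local $G$-spaces under a number of standing conventions (the exact shape permitted for $D(A)$, their definitions of local orbit and of effective isotropy, complete regularity of the space), and one must check that each of these reduces to something automatic once $G=\R^k$ acts globally on a compact metric space. I do not anticipate any genuinely new idea beyond the identification $G_y^{(X,G)}=G_y^{\circ}$ already recorded, together with the elementary remarks that a global action is in particular a local action and that compact metric spaces are completely regular.
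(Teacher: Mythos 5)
Your proposal is correct and follows exactly the route the paper takes: the theorem is obtained by specialising \cite[Appendix II, Theorem 1.11]{HM66} to the t.d.s.\ setting, using that a global action on a compact metrizable (hence completely regular) space is a local $G$-space and that the effective isotropy group $G_y^{(X,G)}$ coincides with $G_y^{\circ}$ (the Proposition preceding Definition \ref{def:local cross section}), so that both the hypothesis and the conclusion of the Hofmann--Mostert theorem translate into the forms stated. The paper itself offers no further argument beyond this citation and the surrounding translation of definitions, so your bookkeeping is precisely what is required.
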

    
\begin{proof}[Proof of Theorem \ref{thm:local section}]
 We now apply Theorem \ref{existence of local cross sections} to prove Theorem \ref{thm:local section}. Denote $Y=X\setminus\Fix(X,\R^k)$. As $(X,\Rk)$ is weakly locally free, for each $y\in Y$ there exists an open neighborhood $U_y$ of $\mathbf{0}$ in $\Rk$ such that $\ss y\neq y$  for all $\mathbf s\in U_y\setminus\{\mathbf{0}\}$. Thus for all $y\in Y$, 
\begin{equation*}
    G_{y}^\circ=\{\mathbf{0}\}.
\end{equation*}
Fix  a point $p\in Y$, an open set $p\in V\subset Y$ and denote $W=\Rk$. Then for each $y\in V$ it holds
\begin{equation*}
    G_y^\circ\cap W=G_p^\circ\cap W=\{\mathbf{0}\}.
\end{equation*}
 By Theorem \ref{existence of local cross sections}, there exist  a local cross section to the local orbits at $p$. That is,  we may find $K,C,U$ as in Definition \ref{def:local cross section}. We claim this gives rise to a local section at $p$ as defined in Definition \ref{def:local section}. Indeed it is enough to show $K$ is a neighborhood of $\textbf{0}$ in $\Rk$. As $(\Rk)_p^\circ=\{\textbf{0}\}$ where $(\Rk)_p^\circ$ is the 
  the identity component of $\Rk_{p}:=\{ \rr \in \Rk|\,\, \rr p=p\}$, we conclude $K'=\{\textbf{0}\}$ and the result follows.  Q.E.D.
\end{proof}

\section{Three lemmas from linear algebra}
\indent\indent For $\mathbf u=(x_1,\ldots,x_{n+1})\in\R^{n+1}$ set
\begin{equation*}
    D\mathbf u=(x_2-x_1,x_3-x_2,\ldots,x_{n+1}-x_n)\in\R^n.
\end{equation*}
\begin{lem}\cite[Lemma 2.3]{GJT19}\label{lem:ap_1}
    Let $l\geq m+1$ and set $\mathbf e:=\{\underbrace{1,\ldots,1}_{l}\}\in\R^l$. Then the set of $(\mathbf u_1,\ldots,\mathbf u_m)\in\R^{l+1}\times\ldots\times\R^{l+1}=(\R^{l+1})^m$ such that the $m+1$ vectors $\mathbf e,D\mathbf u_1,D\mathbf u_2,\ldots,D\mathbf u_m\in \R^l$ are linearly independent is open and dense in $(\R^{l+1})^m$.
\end{lem}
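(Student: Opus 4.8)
The plan is to identify the set in the statement with the non-vanishing locus of a single real polynomial, and then to use the elementary fact that a non-trivial polynomial on Euclidean space has nowhere dense zero set. Write $\mathbf{u}_j=(u_{j,1},\dots,u_{j,l+1})$, so that the coordinates of $D\mathbf{u}_j\in\R^l$ are the affine-linear expressions $u_{j,i+1}-u_{j,i}$. Since $m+1\le l$, we may form the $(m+1)\times l$ matrix $\Xi(\mathbf{u}_1,\dots,\mathbf{u}_m)$ whose rows are $\mathbf{e},D\mathbf{u}_1,\dots,D\mathbf{u}_m$. These $m+1$ vectors are linearly independent precisely when $\Xi$ has rank $m+1$, i.e. when at least one of its $(m+1)\times(m+1)$ minors is non-zero. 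Let $P(\mathbf{u}_1,\dots,\mathbf{u}_m)$ be the sum of the squares of all such minors; each minor is a polynomial of degree $\le m$ in the $m(l+1)$ real variables $u_{j,i}$ (the $\mathbf{e}$-row contributes constants, the other $m$ rows contribute linear entries), so $P$ is a polynomial, and $\mathbf{e},D\mathbf{u}_1,\dots,D\mathbf{u}_m$ are linearly independent if and only if $P\neq 0$. Thus the set in question is exactly $\{P\neq 0\}$.

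Openness is then immediate from continuity of $P$ (equivalently: linear independence of a fixed number of vectors is an open condition, and $(\mathbf{u}_1,\dots,\mathbf{u}_m)\mapsto(\mathbf{e},D\mathbf{u}_1,\dots,D\mathbf{u}_m)$ is continuous, even linear). For density it suffices to check that $P$ is not identically zero, since the zero set of a non-trivial polynomial on $\R^N$ has empty interior (indeed Lebesgue measure zero), so its complement is dense. To exhibit a point where $P\neq 0$, I would use that $D\colon\R^{l+1}\to\R^l$ is surjective: its kernel is the line spanned by $(1,\dots,1)$, so its image is all of $\R^l$; explicitly $D(0,x_1,x_1+x_2,\dots,x_1+\cdots+x_l)=(x_1,\dots,x_l)$. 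Since $l\ge m+1$, the vectors $\mathbf{e},e_1,\dots,e_m$ (with $e_1,\dots,e_m$ the first standard basis vectors of $\R^l$) are linearly independent: reading any linear relation in coordinate $l\ge m+1$ forces the coefficient of $\mathbf{e}$ to vanish, and then the remaining coefficients vanish too. Choosing $\mathbf{v}_j\in\R^{l+1}$ with $D\mathbf{v}_j=e_j$ makes $\mathbf{e},D\mathbf{v}_1,\dots,D\mathbf{v}_m$ linearly independent, so $P(\mathbf{v}_1,\dots,\mathbf{v}_m)\neq 0$, which finishes the proof.

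I do not expect a genuine obstacle here; the only points needing a little care are the bookkeeping that the $(m+1)\times(m+1)$ minors really are polynomials in the entries $u_{j,i}$ (so that the zero-set argument applies), and the repeated use of the hypothesis $l\ge m+1$, which guarantees both that the $(m+1)\times l$ matrix has at least as many columns as rows and that $\mathbf{e}$ can be completed by $m$ further vectors to a linearly independent family in $\R^l$.
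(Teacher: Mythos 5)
Your proof is correct, and it is worth noting that the paper itself gives no proof of this lemma --- it is quoted verbatim from \cite[Lemma 2.3]{GJT19} in an appendix --- so there is nothing in the present text to deviate from. Your argument is the standard one: the bad set is the zero locus of the polynomial $P$ (sum of squares of the $(m+1)\times(m+1)$ minors, which are indeed polynomials of degree $\le m$ since the $\mathbf e$-row is constant and the entries of each $D\mathbf u_j$ are linear in $\mathbf u_j$), openness is continuity of $P$, and density follows once $P$ is shown to be not identically zero. Both uses of the hypothesis $l\ge m+1$ are correctly placed (the matrix has enough columns, and $\mathbf e, e_1,\dots,e_m$ are independent in $\R^l$), and the surjectivity of $D$ with kernel $\R\cdot(1,\dots,1)$ is exactly what is needed to lift $e_1,\dots,e_m$ to witnesses $\mathbf v_1,\dots,\mathbf v_m$. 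The cited source argues density by an inductive perturbation of one $\mathbf u_j$ at a time (using that the preimage under the surjective map $D$ of a proper subspace of $\R^l$ is a proper affine-free subspace of $\R^{l+1}$, hence nowhere dense); your single-polynomial formulation buys a cleaner global statement (the good set is the complement of a measure-zero algebraic set) at essentially no extra cost, while the inductive route avoids writing down minors. Either way the proof is complete.
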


\begin{lem}\cite[Lemma 2.4]{GJT19}\label{lem:ap_2}
    Let $n>l\geq 2m$. Then the set of $(\mathbf u_1,\ldots,\mathbf u_m)\in(\R^{n})^m$ such that, for any integer $\alpha$ with $2\leq\alpha\leq n-l+1$, 
    \begin{equation*}
        \text{the vectors } \mathbf u_1\vert_{1}^{l}, \mathbf u_2\vert_{1}^{l},\ldots, \mathbf u_m\vert_{1}^{l}, \mathbf u_1\vert_{\alpha}^{\alpha+l-1}, \mathbf u_2\vert_{\alpha}^{\alpha+l-1},\ldots, \mathbf u_m\vert_{\alpha}^{\alpha+l-1} \text{ are linearly independent in } \R^l
    \end{equation*}
    is open and dense in $(\R^{n})^m$. Here for $\mathbf u_i=(x_{i1},\ldots,x_{in})$
    \begin{equation*}
        \mathbf u_i\vert_{1}^{l}=(x_{i1},\ldots,x_{il}), \mathbf u_i\vert_{\alpha}^{\alpha+l-1}=(x_{i,\alpha},\ldots,x_{i,\alpha+l-1}).
    \end{equation*}
\end{lem}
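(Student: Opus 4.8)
The plan is to show that the complement of the set in question is a proper real-algebraic subset of $(\R^n)^m\cong\R^{mn}$; since such a set is automatically closed and of empty interior, its complement is open and dense, which is exactly the claim. For a fixed integer $\alpha$ with $2\le\alpha\le n-l+1$, let $M_\alpha(\mathbf u_1,\dots,\mathbf u_m)$ be the $2m\times l$ matrix whose rows are $\mathbf u_1\vert_1^l,\dots,\mathbf u_m\vert_1^l,\mathbf u_1\vert_\alpha^{\alpha+l-1},\dots,\mathbf u_m\vert_\alpha^{\alpha+l-1}$; its entries are linear in the coordinates of the $\mathbf u_i$. The $2m$ rows are linearly dependent in $\R^l$ exactly when $M_\alpha$ has rank $<2m$, i.e.\ when all of its $2m\times 2m$ minors vanish. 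Hence the ``bad set'' (tuples for which independence fails for at least one admissible $\alpha$) is the union over the finitely many values of $\alpha$ of the common zero loci of these minors, so it is an algebraic set; and it is a \emph{proper} algebraic set — whence of empty interior and its complement dense — as soon as one exhibits a single tuple $(\mathbf u_1,\dots,\mathbf u_m)$ that works for all admissible $\alpha$ simultaneously. This reduces everything to one nonemptiness statement, which is the heart of the matter.

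For the construction, the naive choice $\mathbf u_i(j)=\xi_i^{\,j}$ fails at once, since then $\mathbf u_i\vert_\alpha^{\alpha+l-1}=\xi_i^{\,\alpha-1}\,\mathbf u_i\vert_1^l$, so the two windows arising from a single $\mathbf u_i$ are parallel and one can never reach $2m$ independent vectors. The remedy is to superpose two geometric progressions: pick $2m$ distinct positive reals $\xi_1,\dots,\xi_m,\eta_1,\dots,\eta_m$ and set $\mathbf u_i(j)=\xi_i^{\,j}+\eta_i^{\,j}$ for $j=1,\dots,n$. Writing $X_i=(\xi_i,\xi_i^2,\dots,\xi_i^l)$ and $Y_i=(\eta_i,\dots,\eta_i^l)$ in $\R^l$, one gets $\mathbf u_i\vert_1^l=X_i+Y_i$ and $\mathbf u_i\vert_\alpha^{\alpha+l-1}=\xi_i^{\,\alpha-1}X_i+\eta_i^{\,\alpha-1}Y_i$. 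The $2m$ vectors $X_1,Y_1,\dots,X_m,Y_m$ are linearly independent in $\R^l$ — this is where $l\ge 2m$ is used, via the nonvanishing of a Vandermonde determinant formed from the distinct nonzero numbers $\xi_i,\eta_i$ — so they are a basis of their span $W\subseteq\R^l$. Expressed in this basis, the $2m$ target vectors have block-diagonal coordinate matrix with $2\times2$ diagonal blocks $\left(\begin{smallmatrix}1&1\\ \xi_i^{\,\alpha-1}&\eta_i^{\,\alpha-1}\end{smallmatrix}\right)$, of total determinant $\prod_{i=1}^m(\eta_i^{\,\alpha-1}-\xi_i^{\,\alpha-1})$. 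As $\alpha-1\ge 1$ and the $\xi_i,\eta_i$ are positive and distinct, every factor is nonzero, so the $2m$ target vectors form a basis of $W$; in particular they are linearly independent, and this holds for every admissible $\alpha$ with the \emph{same} tuple.

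Combining the two steps, the tuple above witnesses that the bad set is a proper algebraic subset of $(\R^n)^m$, so its complement is open and dense, as asserted; here $n>l$ enters to guarantee both that the range of $\alpha$ is nonempty and that the windows lie within the first $n$ coordinates, so that the prescribed $\mathbf u_i$ indeed live in $\R^n$. I expect the only genuine obstacle to be finding the right explicit family in the second step: once one notices that a single geometric progression produces parallel windows and that a second progression ``decorrelates'' them while keeping all the relevant matrices Vandermonde, the rest is a routine determinant computation, and the two hypotheses are consumed precisely where anticipated.
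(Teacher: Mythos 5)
Your argument is correct and complete. Note that this paper does not actually prove Lemma \ref{lem:ap_2}: it is imported verbatim from \cite[Lemma 2.4]{GJT19}, so there is no in-paper proof to compare against; your write-up is a valid self-contained substitute. Your two-step scheme --- (i) the bad set is a finite union (over the admissible $\alpha$) of the common zero loci of the $2m\times 2m$ minors of matrices whose entries are linear in the $\mathbf u_i$, hence a closed algebraic set with empty interior as soon as it is proper, and (ii) properness via a single explicit witness --- is the standard genericity argument, and the two hypotheses are consumed exactly where you say: $l\ge 2m$ for the Vandermonde independence of the $2m$ vectors $X_i,Y_i$ (and for the $2m\times 2m$ minors to exist at all), and $n>l$ for the range of $\alpha$ to be nonempty and the windows to fit inside $\R^n$. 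The superposition $\mathbf u_i(j)=\xi_i^{\,j}+\eta_i^{\,j}$ of two geometric progressions is a clean device: it makes every change-of-basis matrix block-diagonal with $2\times 2$ blocks of determinant $\eta_i^{\,\alpha-1}-\xi_i^{\,\alpha-1}\neq 0$, so one tuple witnesses non-degeneracy for all admissible $\alpha$ simultaneously, which is all that is needed since the bad set is a finite union of proper closed sets with empty interior.
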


\begin{lem}\cite[Lemma A.6]{Gut15Jaworski}
\label{lem:Almost sure linear independence}Let $m,s,r \in\N$. Let $V\subset\mathbb{R}^{m}$ be a linear subspace
with $\dim(V)=r$. If $r+s\leq m$, then almost surely w.r.t Lebesgue
measure for $(\vv_{r+1},\vv_{r+2},\ldots,\vv_{r+s})\in([0,1]^{m})^{s}$, $\dim(V+\spann(\vv_{r+1},\vv_{r+2},\ldots,\vv_{r+s}))=r+s$.
\end{lem}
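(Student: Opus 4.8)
The plan is a routine induction on $s$, the engine being Fubini's theorem together with the elementary fact that a proper linear subspace of $\R^m$ has $m$-dimensional Lebesgue measure zero (being contained in a hyperplane).

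First I would fix a basis $b_1,\ldots,b_r$ of $V$ and record that the exceptional set
\begin{equation*}
E:=\{(\vv_{r+1},\ldots,\vv_{r+s})\in([0,1]^m)^s:\dim(V+\spann(\vv_{r+1},\ldots,\vv_{r+s}))<r+s\}
\end{equation*}
is closed in $([0,1]^m)^s$: membership in $E$ is equivalent to the simultaneous vanishing of all $(r+s)\times(r+s)$ minors of the matrix with rows $b_1,\ldots,b_r,\vv_{r+1},\ldots,\vv_{r+s}$, and these minors are polynomials (hence continuous) in the coordinates of the $\vv_j$. In particular $E$ is Borel and Fubini's theorem applies to $\mathbf 1_E$.

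The base case $s=0$ is the hypothesis $\dim V=r$. For the inductive step I would assume the statement for $s-1$, so the set $E'\subset([0,1]^m)^{s-1}$ of tuples $(\vv_{r+1},\ldots,\vv_{r+s-1})$ with $\dim(V+\spann(\vv_{r+1},\ldots,\vv_{r+s-1}))<r+s-1$ has $(m(s-1))$-dimensional Lebesgue measure zero. Fix any tuple $(\vv_{r+1},\ldots,\vv_{r+s-1})\notin E'$ and put $W:=V+\spann(\vv_{r+1},\ldots,\vv_{r+s-1})$, so $\dim W=r+s-1\leq m-1<m$ by the hypothesis $r+s\leq m$. Hence $W$ is a proper subspace of $\R^m$, so $\leb(W)=0$, and for every $\vv_{r+s}\in[0,1]^m\setminus W$ one has $\dim(W+\spann(\vv_{r+s}))=r+s$. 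Consequently, for this fixed tuple the slice $\{\vv_{r+s}:(\vv_{r+1},\ldots,\vv_{r+s})\in E\}$ is contained in $W\cap[0,1]^m$ and has $m$-dimensional Lebesgue measure $0$. Integrating over $(\vv_{r+1},\ldots,\vv_{r+s-1})$ and applying Fubini, the slices contribute $0$ off the null set $E'$, whence $\leb(E)=0$, which is the claim.

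There is no real obstacle here; the only points deserving a line of care are the measurability of $E$ (handled by the minor description above, so that Fubini is legitimate) and the bookkeeping $\dim W=r+s-1<m$, which is precisely what makes $W$ a proper — hence null — subspace. One could alternatively phrase the argument as ``the zero set of a not-identically-zero polynomial on $\R^{ms}$ is Lebesgue-null'', after checking $E\neq([0,1]^m)^s$ by exhibiting $s$ standard basis vectors whose images span $\R^m/V$; but the inductive version above is entirely self-contained.
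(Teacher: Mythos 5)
Your argument is correct and is essentially the standard proof of this fact (the paper itself only cites \cite[Lemma A.6]{Gut15Jaworski}, whose proof is the same induction-plus-Fubini argument): the exceptional set is the zero locus of the top minors, hence Borel, and each slice over a good $(s-1)$-tuple lies in the proper subspace $W$ with $\dim W = r+s-1 < m$, hence is Lebesgue-null. The measurability remark and the dimension bookkeeping are exactly the two points that need checking, and you handle both.
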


\section{Some Facts about Lipschitz Functions}

\begin{lem}\label{lem:max Lip}
Let $f,g: [0,a]^k\rightarrow \R$ be $\tau$-Lipschitz for some $\tau>0$, then $\max\{f,g\}: [0,a]^k\rightarrow \R$ and $\min\{f,g\}: [0,a]^k\rightarrow \R$ are $\tau$-Lipschitz. 
\end{lem}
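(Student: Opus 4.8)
The plan is to reduce both assertions to an elementary pointwise inequality for real numbers. First I would record the observation that for any $p,q,r,s\in\R$,
\[
|\max\{p,q\}-\max\{r,s\}|\le\max\{|p-r|,\,|q-s|\},
\]
which is proved by a short symmetry reduction: assuming without loss of generality that $\max\{p,q\}\ge\max\{r,s\}$ and that $\max\{p,q\}=p$, one immediately gets $0\le\max\{p,q\}-\max\{r,s\}\le p-r\le|p-r|\le\max\{|p-r|,|q-s|\}$, and the general case follows by swapping the roles of the two pairs.

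Next I would apply this with $p=f(x)$, $q=g(x)$, $r=f(y)$, $s=g(y)$ for arbitrary $x,y\in[0,a]^k$. Since both $f$ and $g$ are $\tau$-Lipschitz, this yields
\[
|\max\{f,g\}(x)-\max\{f,g\}(y)|\le\max\{|f(x)-f(y)|,\,|g(x)-g(y)|\}\le\tau\|x-y\|_2,
\]
so $\max\{f,g\}$ is $\tau$-Lipschitz. For the minimum I would simply note that $\min\{f,g\}=-\max\{-f,-g\}$ and that $-f,-g$ are $\tau$-Lipschitz whenever $f,g$ are, so the claim for $\min$ follows from the claim for $\max$. There is no genuine obstacle in this lemma; the only step that requires a moment's care is the elementary max inequality above, which the symmetry argument dispatches.
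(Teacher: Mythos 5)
Your proof is correct and follows essentially the same route as the paper: an elementary pointwise estimate showing $\max\{f,g\}$ inherits the Lipschitz bound (the paper bounds $f(\mathbf{s})$ and $g(\mathbf{s})$ each by $\max\{f,g\}(\mathbf{t})+\tau\|\mathbf{s}-\mathbf{t}\|_2$, which is just a repackaging of your inequality $|\max\{p,q\}-\max\{r,s\}|\le\max\{|p-r|,|q-s|\}$), followed by the identical reduction $\min\{f,g\}=-\max\{-f,-g\}$.
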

\begin{proof}
    Denote $h(\tt)=\max\{f(\tt),g(\tt)\}$. Note that for any $\ss\in [0,a]^k$ it holds $$f(\ss)\leq f(\tt)+|f(\ss)-f(\tt)|\leq h(\tt)+\tau\|\ss-\tt\|_2.$$
    Similarly $g(\ss)\leq  h(\tt)+\tau\|\ss-\tt\|_2.$
    Thus 
    $$h(\ss)\leq h(\tt)+\tau\|\ss-\tt\|_2.$$
    Reversing the roles of $\ss$ and $\tt$, one has $|h(\ss)-h(\tt)|\leq \tau\|\ss-\tt\|_2$ as desired. Now notice  $\min\{f,g\}=-\max\{-f,-g\}$ to conclude the same result for the minimum function.  
\end{proof}

\begin{lem}\label{lem:Lipschitz constant continuity}
Let $(M,d)$ be a metric space. Let $\tau',\epsilon>0$ and $0<\delta<1$. Let $A,E, T\subset M$ and $f:A\cup T\cup E\rightarrow \R$  so that
\begin{enumerate}
    \item 
    For all $a\in A$ and $c\in E$
    $$
\frac{|f(a)-f(c)|}{d(a,c)}\leq \tau'.
    $$
    \item 
    For all $t\in T$ there exists $a\in A$ so that 
   \begin{equation}\label{eq:a,b}
       \frac{d(a,t)}{d(A,E)}\leq \delta,\, \frac{|f(t)-f(a)|}{d(A,E)}\leq \epsilon.
   \end{equation}

    Then for all $t\in T$ and $c\in E$
    $$
\frac{|f(t)-f(c)|}{d(t,c)}\leq \frac{\tau'+\epsilon}{1-\delta}.
    $$
\end{enumerate}
\end{lem}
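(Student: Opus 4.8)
The plan is a direct application of the triangle inequality, both for $d$ and for absolute values of reals, combined with the elementary observation that hypothesis (2) forces the quantity $d(A,E)$ to be comparable to $d(t,c)$. First I would fix $t\in T$ and $c\in E$; one may assume $d(t,c)>0$ (otherwise $t=c$ and $f(t)=f(c)$, so the asserted inequality is vacuous) and $d(A,E)>0$ (otherwise hypothesis (2) is degenerate), which matches the intended application in Claim \ref{claim:g_0_Lip} where $A=\AA\subset(0,a)^k$ is a finite set disjoint from $\Edge([0,a]^k)$. By hypothesis (2) pick $a\in A$ with $d(a,t)\leq\delta\,d(A,E)$ and $|f(t)-f(a)|\leq\epsilon\,d(A,E)$.

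The key step is to bound $d(A,E)$ in terms of $d(t,c)$. Since $a\in A$ and $c\in E$, and $d(A,E)$ is an infimum, we have
\[
d(A,E)\leq d(a,c)\leq d(a,t)+d(t,c)\leq \delta\,d(A,E)+d(t,c),
\]
and rearranging (using $0<\delta<1$) gives $d(A,E)\leq \dfrac{d(t,c)}{1-\delta}$.

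Finally I would assemble the estimate. Using the triangle inequality for $f$, hypothesis (1) applied to the pair $(a,c)$, hypothesis (2), and the chain $d(a,c)\leq\delta\,d(A,E)+d(t,c)$ established above,
\[
|f(t)-f(c)|\leq |f(t)-f(a)|+|f(a)-f(c)|\leq \epsilon\,d(A,E)+\tau'\,d(a,c)\leq(\epsilon+\tau'\delta)\,d(A,E)+\tau'\,d(t,c).
\]
Substituting $d(A,E)\leq d(t,c)/(1-\delta)$ and simplifying $\dfrac{\epsilon+\tau'\delta}{1-\delta}+\tau'=\dfrac{\epsilon+\tau'\delta+\tau'(1-\delta)}{1-\delta}=\dfrac{\tau'+\epsilon}{1-\delta}$ yields $|f(t)-f(c)|\leq\dfrac{\tau'+\epsilon}{1-\delta}\,d(t,c)$, which is the claim. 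There is essentially no obstacle here; the only points requiring care are the correct direction of $d(A,E)\leq d(a,c)$ and the harmless degenerate cases $d(t,c)=0$ and $d(A,E)=0$.
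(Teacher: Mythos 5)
Your argument is correct and is essentially the paper's proof in a slightly rearranged form: both insert the intermediate point $a$ via the triangle inequality for $f$, and both use the (reverse) triangle inequality together with $d(A,E)\leq d(a,c)$ to show $d(t,c)\geq(1-\delta)d(a,c)$ (equivalently, your bound $d(A,E)\leq d(t,c)/(1-\delta)$), after which the same algebra gives the constant $\frac{\tau'+\epsilon}{1-\delta}$. Your explicit handling of the degenerate cases $d(t,c)=0$ and $d(A,E)=0$ is a harmless addition the paper leaves implicit.
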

\begin{proof}
  Fix $t\in T$ and $c\in E$. Let $a\in A $ so that Equation \eqref{eq:a,b} holds.  We note $d(t,c)\geq |d(a,c)-d(t,a)|\geq d(a,c)(1-\frac{d(t,a)}{d(a,c)})\geq d(a,c)(1-\delta)$.
  Thus 
  $$
\frac{|f(t)-f(c)|}{d(t,c)}\leq \frac{|f(t)-f(a)|+|f(a)-f(c)|}{d(a,c)(1-\delta)}\leq \frac{\epsilon+\tau'}{1-\delta}. 
  $$
\end{proof}

\begin{lem}\label{lem:Lipschitz gradient}
Let  $\Omega\subset  \R^k$ be an open set and let $f: \Omega\rightarrow \R$ be $\tau$-Lipschitz for some $\tau>0$. Then for a.e. $\bb\in \Omega$, $f$ is differential at $\bb$ and $\|\nabla f(\bb)\|_2\leq \tau$.
\end{lem}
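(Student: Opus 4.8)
The plan is to deduce this from Rademacher's theorem together with an elementary estimate on directional derivatives. First I would recall that Rademacher's theorem asserts that a Lipschitz (indeed locally Lipschitz) function on an open subset of $\R^k$ is differentiable Lebesgue-a.e.; applying it to $f$ gives a full-measure set $\Omega_0\subset\Omega$ on which $f$ is differentiable. It remains to bound $\|\nabla f(\bb)\|_2$ at every $\bb\in\Omega_0$.

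Next, fix $\bb\in\Omega_0$ and an arbitrary unit vector $\vv\in\R^k$. Since $\Omega$ is open, for all sufficiently small $t\neq 0$ the point $\bb+t\vv$ lies in $\Omega$, and differentiability at $\bb$ yields
\begin{equation*}
    \langle\nabla f(\bb),\vv\rangle=\lim_{t\to 0}\frac{f(\bb+t\vv)-f(\bb)}{t}.
\end{equation*}
The $\tau$-Lipschitz bound gives $|f(\bb+t\vv)-f(\bb)|\leq\tau\|t\vv\|_2=\tau|t|$, so passing to the limit we obtain $|\langle\nabla f(\bb),\vv\rangle|\leq\tau$ for every unit vector $\vv$.

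Finally, if $\nabla f(\bb)\neq\mathbf 0$ I would take $\vv=\nabla f(\bb)/\|\nabla f(\bb)\|_2$, which gives $\|\nabla f(\bb)\|_2=\langle\nabla f(\bb),\vv\rangle\leq\tau$ (the case $\nabla f(\bb)=\mathbf 0$ being trivial). Since $\bb\in\Omega_0$ was arbitrary and $\leb(\Omega\setminus\Omega_0)=0$, this proves the claim. I do not expect any real obstacle here: the only nontrivial input is Rademacher's theorem, which is entirely standard, and the gradient estimate is a one-line computation; the sole point requiring a word of care is noting that openness of $\Omega$ lets us form the difference quotients along every direction near $\bb$.
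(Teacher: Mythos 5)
Your proposal is correct and follows essentially the same route as the paper: Rademacher's theorem for a.e.\ differentiability, followed by bounding the directional derivative in the direction of the gradient via the Lipschitz estimate on difference quotients. The only cosmetic difference is that you normalize the direction vector first, whereas the paper takes $\rr=\nabla f(\bb)$ unnormalized and divides by $\|\nabla f(\bb)\|_2$ at the end.
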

\begin{proof}
    By Rademacher’s Theorem (see e.g. \cite[\S 5.8, Theorem 6]{EvansPDE}), since $f$ is Lipschitz, it is differentiable almost everywhere in $\Omega$. Let $\bb\in \Omega$ be a point where $f$ is differentiable. Recall the directional derivative of $f$ at $\bb$ along a direction $\rr\in\R^k$ equals the inner product between $\rr$ and the gradient of $f$ at $\bb$. Let $\rr=\nabla f(\bb)$. It follows
    $$\langle \nabla f(\bb), \nabla f(\bb)\rangle=\lim_{t\rightarrow 0} \frac{f(\bb+t\nabla f(\bb))-f(\bb)}{t}\leq \lim_{t\rightarrow 0} \frac{1}{t}\tau \|t\nabla f(\bb)\|_2=\tau \|\nabla f(\bb)\|_2.$$
    We conclude $\|\nabla f(\bb)\|_2\leq \tau$.
\end{proof}

\begin{thm}(McShane--Whitney extension theorem (\cite{McSHANE1934ExtensionOR,whitney1934AnalyticEO}), see also \cite[Theorem 2.3]{gutev2020lipschitz})\label{thm:Lip_Ext}
     Let $(X,d)$ be a metric space, $A\subset  X$ and $\phi : A\rightarrow \R$ be a $\tau$-Lipschitz function. Then the function $\Phi : X \rightarrow \R\cup \{\infty\}$ given by the formula
     \begin{equation*}
        \Phi(x)=\sup\limits_{y\in A}\big(\phi(y)-\tau d(x,y)\big),
    \end{equation*}
     is  a $\tau$-Lipschitz function  which is an extension of $\phi$, i.e.\ so that $\Phi_{|A}=\phi$.
\end{thm}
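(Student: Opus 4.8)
The plan is to verify the three assertions in turn: that $\Phi$ extends $\phi$, that $\Phi$ is finite (under the implicit assumption $A\neq\emptyset$), and that $\Phi$ is $\tau$-Lipschitz on all of $X$. Each follows from a direct application of the triangle inequality together with the Lipschitz bound on $\phi$, so the argument is elementary; the only point requiring care is keeping track of signs when estimating the $-d(\cdot,\cdot)$ terms.

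First I would show $\Phi_{|A}=\phi$. For $x\in A$, choosing $y=x$ in the supremum gives $\Phi(x)\geq\phi(x)$. Conversely, for every $y\in A$ the Lipschitz inequality $\phi(y)-\phi(x)\leq\tau d(x,y)$ yields $\phi(y)-\tau d(x,y)\leq\phi(x)$, and taking the supremum over $y$ gives $\Phi(x)\leq\phi(x)$. Hence equality holds on $A$.

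Next I would check that $\Phi(x)$ is finite for every $x\in X$. Fixing any $y_0\in A$, for an arbitrary $y\in A$ the triangle inequality and the Lipschitz bound give $\phi(y)-\tau d(x,y)\leq \phi(y_0)+\tau d(y,y_0)-\tau d(x,y)\leq \phi(y_0)+\tau d(x,y_0)$, so $\Phi(x)\leq \phi(y_0)+\tau d(x,y_0)<\infty$; in particular $\Phi$ takes values in $\R$.

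Finally, for the Lipschitz estimate I would fix $x_1,x_2\in X$ and $y\in A$ and use $d(x_2,y)\leq d(x_1,y)+d(x_1,x_2)$, equivalently $-d(x_1,y)\leq -d(x_2,y)+d(x_1,x_2)$, to obtain $\phi(y)-\tau d(x_1,y)\leq (\phi(y)-\tau d(x_2,y))+\tau d(x_1,x_2)\leq \Phi(x_2)+\tau d(x_1,x_2)$. Taking the supremum over $y\in A$ gives $\Phi(x_1)\leq\Phi(x_2)+\tau d(x_1,x_2)$, and interchanging $x_1$ and $x_2$ yields $|\Phi(x_1)-\Phi(x_2)|\leq\tau d(x_1,x_2)$. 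I do not anticipate any genuine obstacle: the statement is classical and the entire proof is a short sequence of triangle-inequality manipulations, the only subtlety being the harmless standing assumption $A\neq\emptyset$ needed so that the supremum is taken over a non-empty set.
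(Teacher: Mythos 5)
Your proof is correct and is exactly the classical McShane argument; the paper states this theorem without proof, citing the original references, and your three steps (extension via the two-sided Lipschitz bound, finiteness from a fixed $y_0\in A$, and the Lipschitz estimate from the triangle inequality before taking suprema) are the standard ones. The caveat that $A\neq\emptyset$ is needed is a fair observation and handled appropriately.
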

\noindent
Let $\overline{B}$ be the closed unit ball in $\Rk$. A \textit{mollifier} is a nonnegative function $\theta \in C^\infty(\Rk,\R)$ supported in $\overline{B}$  such that $\int_{\Rk} \theta(\bb) \, d\bb = 1$.
\begin{thm}\label{thm:Lip_C_infty}(\cite[Proposition 4.1]{czarnecki2006approximation}; see also \cite[p. 887]{deville2019approximation}).
Let  $\Omega\subset  \R^k$ be an open and bounded set and $0<\tau<1$, $\delta, \epsilon>0$ given constants. There exist  a mollifier $\theta$ and a function $\rho\in C^\infty(\Rk)$ (depending only on $\Omega,\tau, \delta, \epsilon$) such that for any $\tau$-Lipschitz function $\phi : \overline{\Omega}\rightarrow \R$, there exists a $(\tau+\epsilon)$-Lipschitz function $\Phi: \overline{\Omega}\rightarrow \R$ such that $\Phi_{|\Omega}\in C^\infty(\Omega, \R)$, $\|\phi-\Phi\|_{\infty}<\delta$, $\Phi_{|\partial \Omega}=\phi_{|\partial \Omega}$ and\footnote{In particular for all $\bb\in \Omega$ and $\tt\in \overline{B}$, $ \bb-\rho(\bb)\tt\in \Omega$.}  such that for all  $\bb\in \Omega$ it holds
\begin{equation}\label{eq:Phi}
 \Phi(\bb)=\int_{\overline{B}} \theta(\tt)\phi(\bb-\rho(\bb)\tt) d\tt.
\end{equation}

\end{thm}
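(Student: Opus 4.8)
The plan is to obtain $\Phi$ as a mollification of $\phi$ with a \emph{variable}, boundary-degenerate radius, precisely of the form \eqref{eq:Phi}. Fix once and for all a mollifier $\theta$, and set $d(\mathbf{b}):=\mathrm{dist}(\mathbf{b},\R^k\setminus\Omega)$, a $1$-Lipschitz function which is positive exactly on $\Omega$. The heart of the matter is to construct, depending only on $\Omega$ and on a single parameter $\kappa=\kappa(\tau,\delta,\epsilon)>0$, a function $\rho\in C^\infty(\R^k)$ with: (i) $0\le\rho\le\kappa$ on $\R^k$ and $\{\rho>0\}=\Omega$; (ii) $\rho(\mathbf{b})\le\tfrac12 d(\mathbf{b})$ for all $\mathbf{b}$; (iii) $\rho$ is $\kappa$-Lipschitz. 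Granting this and taking $\kappa:=\min\{\delta/(2\tau),\,\epsilon/\tau\}$, one defines $\Phi(\mathbf{b}):=\int_{\overline{B}}\theta(\mathbf{t})\phi(\mathbf{b}-\rho(\mathbf{b})\mathbf{t})\,d\mathbf{t}$ for $\mathbf{b}\in\Omega$; by (ii) the point $\mathbf{b}-\rho(\mathbf{b})\mathbf{t}$ lies in $\Omega$ (which is also the footnoted inclusion), so $\Phi$ is well defined, and $\theta,\rho$ depend only on $\Omega,\tau,\delta,\epsilon$ as required — observe that $\phi$ enters nowhere in the construction of $\theta$ or $\rho$.

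The remaining verifications are standard mollification bookkeeping. Substituting $\mathbf{y}=\mathbf{b}-\rho(\mathbf{b})\mathbf{t}$ gives $\Phi(\mathbf{b})=\rho(\mathbf{b})^{-k}\int_{\R^k}\theta\big((\mathbf{b}-\mathbf{y})/\rho(\mathbf{b})\big)\phi(\mathbf{y})\,d\mathbf{y}$, and since $\rho>0$ and $\rho,\theta$ are $C^\infty$ on $\Omega$, differentiation under the integral sign shows $\Phi|_\Omega\in C^\infty(\Omega)$. For the uniform estimate, $|\Phi(\mathbf{b})-\phi(\mathbf{b})|\le\int_{\overline{B}}\theta(\mathbf{t})\,\tau\|\rho(\mathbf{b})\mathbf{t}\|_2\,d\mathbf{t}\le\tau\rho(\mathbf{b})\le\tau\kappa<\delta$. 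For the Lipschitz estimate, for $\mathbf{b}_1,\mathbf{b}_2\in\Omega$,
\begin{equation*}
|\Phi(\mathbf{b}_1)-\Phi(\mathbf{b}_2)|\le\int_{\overline{B}}\theta(\mathbf{t})\,\tau\big\|(\mathbf{b}_1-\mathbf{b}_2)-(\rho(\mathbf{b}_1)-\rho(\mathbf{b}_2))\mathbf{t}\big\|_2\,d\mathbf{t}\le\tau\big(1+\Lip(\rho)\big)\|\mathbf{b}_1-\mathbf{b}_2\|_2\le(\tau+\epsilon)\|\mathbf{b}_1-\mathbf{b}_2\|_2,
\end{equation*}
where we used $\|\mathbf{t}\|_2\le1$ on $\overline{B}$; hence $\Phi|_\Omega$ extends uniquely to a $(\tau+\epsilon)$-Lipschitz map on $\overline{\Omega}$. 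Finally, if $\mathbf{b}_0\in\partial\Omega$ and $\mathbf{b}\to\mathbf{b}_0$ inside $\Omega$, then $\rho(\mathbf{b})\le\tfrac12 d(\mathbf{b})\to0$, so $\mathbf{b}-\rho(\mathbf{b})\mathbf{t}\to\mathbf{b}_0$ uniformly in $\mathbf{t}\in\overline{B}$; by uniform continuity of $\phi$ on the compact set $\overline{\Omega}$, $\Phi(\mathbf{b})\to\phi(\mathbf{b}_0)$, so the extension obeys $\Phi|_{\partial\Omega}=\phi|_{\partial\Omega}$.

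The one genuinely nontrivial step, which I expect to be the main obstacle, is the construction of $\rho$ meeting (i)--(iii) simultaneously — a ``regularized distance to $\partial\Omega$'', rescaled to be uniformly small and slowly varying, and in particular globally $C^\infty$ across $\partial\Omega$. I would build it from a Whitney decomposition of $\Omega$ into dyadic cubes $Q_i$ with $\diam(Q_i)\asymp\mathrm{dist}(Q_i,\partial\Omega)$ and boundedly-overlapping dilates $Q_i^{*}$, a subordinate smooth partition of unity $\{\psi_i\}$ with $\|\partial^\alpha\psi_i\|_\infty\lesssim\diam(Q_i)^{-|\alpha|}$, and then set $\rho:=\sum_i c_i\psi_i$ with $c_i:=w_i\cdot\min\{\kappa,\tfrac14\diam(Q_i)\}$, the weights $w_i\in(0,1]$ chosen to decay to $0$ faster than every power of $\diam(Q_i)$ as the cubes accumulate at $\partial\Omega$. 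This fast decay forces $\rho$ and all its derivatives to vanish at $\partial\Omega$, so $\rho\in C^\infty(\R^k)$; the bounded overlap yields (iii) after absorbing a dimensional constant into $\kappa$; and (i)--(ii) are immediate, since $c_i\le\kappa$ and $c_i\le\tfrac14\diam(Q_i)\le\tfrac12 d$ on $\supp\psi_i$. Alternatively one may simply quote one of the classical regularized-distance constructions and cap and rescale it. With $\rho$ in hand, everything above is elementary.
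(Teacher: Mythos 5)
Your proof is correct and follows the same core strategy as the paper -- necessarily so, since the formula \eqref{eq:Phi} for $\Phi$ is prescribed in the statement -- but the two technical sub-steps are executed differently. For the construction of $\rho$, the paper simply cites Cornet--Czarnecki \cite[Lemma 4.3]{cornet1999smooth} to obtain $\rho\in C^\infty(\Omega,\R_{>0})$ with $\rho\leq\min\{\frac{\delta}{2\tau},\frac{h}{\tau},\frac{h}{2}\}$ and $\|\nabla\rho\|_2\leq\frac{\epsilon}{\tau}$, whereas you build a regularized distance from scratch via a Whitney decomposition; both are legitimate, though in your sketch the bound $\Lip(\rho)\leq\kappa$ needs the weights to satisfy $w_i\lesssim\kappa$ (merely ``absorbing a dimensional constant into $\kappa$'' leaves you with $c_i|\nabla\psi_i|\lesssim w_i$, which is only $O(1)$ if the $w_i$ are not themselves capped by a multiple of $\kappa$) -- an easy fix since the $w_i$ are free parameters. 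For the Lipschitz bound on $\Phi$, the paper differentiates under the integral sign, invokes Rademacher's theorem to get $\|\nabla\phi\|_2\leq\tau$ a.e., bounds $\|\nabla\Phi\|_2\leq\tau+\epsilon$, and then appeals to the mean value theorem; your direct estimate
$|\Phi(\bb_1)-\Phi(\bb_2)|\leq\int_{\overline{B}}\theta(\tt)\,\tau\|(\bb_1-\bb_2)-(\rho(\bb_1)-\rho(\bb_2))\tt\|_2\,d\tt\leq\tau(1+\Lip(\rho))\|\bb_1-\bb_2\|_2$
is more elementary and, notably, valid for arbitrary pairs of points in a non-convex $\Omega$ (which is the relevant case in the paper's application, where $\Omega=(0,a)^k\setminus\DD$), whereas a gradient-plus-mean-value argument a priori only controls the Lipschitz constant along segments contained in $\Omega$. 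Your boundary-extension and $C^\infty$ arguments coincide with the paper's.
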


\begin{proof}
 We note that the  proof of the theorem is deduced from the proof of \cite[Proposition 4.1]{czarnecki2006approximation} which proves a  stronger statement for locally Lipschitz functions.  We give the main steps. Let $h: \Omega \rightarrow (0,\infty)$ be the distance function to the boundary $h(\bb):=d(\Rk \setminus \Omega, \bb)$.
By  \cite[Lemma 4.3]{cornet1999smooth} there exists a function $\rho\in C^{\infty}(\Omega,\R_{>0})$ such that for all $\bb\in \Omega$,
$
\rho(\bb) \leq \min \left\{\frac{\delta}{2\tau}, \frac{h(\bb)}{\tau}, \frac{h(\bb)}{2}\right\}
$ and $\|\nabla \rho(\bb)\|_2 \leq \frac{\epsilon}{\tau}$.
\noindent
As for all $\bb\in \Omega$, $0<\rho(\bb)\leq \frac{h(\bb)}{2}$, we conclude that for all $\bb\in \Omega$ and $\tt\in \overline{B}$, $ \bb-\rho(\bb)\tt\in \Omega$. Let $\theta$  be a mollifier. Define $\Phi$ on $\Omega$ by \eqref{eq:Phi}. In addition define  $\Phi_{|\partial \Omega}:=\phi_{|\partial \Omega}$. Note that $\Phi_{|\Omega}\in C^\infty(\Omega, \R)$ as easily seen by the change of variable $\yy=\bb-\rho(\bb)\tt$.  For all $\bb\in \Omega$, as $\rho(\bb)\leq \frac{\delta}{2\tau}$, it holds 
\[
|\Phi(\bb) - \phi(\bb)| = |\int_{\overline{B}} \theta(\tt)\big (\phi(\bb - \rho(\bb)\tt) - \phi(\bb)\big) \, d\tt|
\leq \int_{\overline{B}} \theta(\tt)\tau \|\rho(\bb)\tt\|_2 \, d\tt
\leq \int_{\overline{B}} \theta(\tt)\tau \rho(\bb) \, d\tt<\delta.
\]
\noindent
Similarly as $\rho(\bb)\leq \frac{h(\bb)}{\tau}$, it holds $|\Phi(\bb) - \phi(\bb)|\leq h(\bb)$ which implies $\Phi$ (which is defined on $\partial \Omega$ by $\Phi_{|\partial \Omega}=\phi_{|\partial \Omega}$) is continuous on $\overline{\Omega}$. We conclude $\|\phi-\Phi\|_{\infty}<\delta$. In order to prove $\Phi$ is  $(\tau+\epsilon)$-Lipschitz  in $\Omega$ we note that for all $\bb\in \Omega$
\begin{equation*}
\begin{split}
\|\nabla \Phi(\bb)\|_2 &= \|\int_{\overline{B}} \theta(\tt)\nabla_\bb \phi(\bb-\rho(\bb)\tt) d\tt\|_2=\|\int_{\overline{B}} \theta(\tt)\big (\nabla \phi(\bb-\rho(\bb)\tt) -\langle \nabla \phi(\bb-\rho(\bb)\tt),\tt \rangle \nabla \rho(\bb) \big ) d\tt\|_2\\
&\leq \int_{\overline{B}} \theta(\tt)\|\nabla \phi(\bb-\rho(\bb)\tt\|_2 d\tt +\int_{\overline{B}} \theta(\tt) \|\nabla \phi(\bb-\rho(\bb)\tt)\|_2 \|\tt\|_2 \|\nabla \rho(\bb) \|_2 d\tt
\end{split}
\end{equation*}

By Lagrange's mean value theorem it is enough to show $\|\|\nabla \Phi\|_2\|_\infty\leq \tau+\epsilon$. By Lemma \ref{lem:Lipschitz gradient} for almost all $\tt\in \overline{B}$, $\|\nabla \phi(\bb-\rho(\bb)\tt)\|_2\leq \tau$. Thus  for all $\bb\in \Omega$
\begin{equation*}
\|\nabla \Phi(\bb)\|_2 \leq  \int_{\overline{B}} \theta(\tt)\tau d\tt +\int_{\overline{B}} \theta(\tt) \tau \frac{\epsilon}{\tau}d\tt
=\tau+\epsilon.
\end{equation*}

\end{proof}

\section{Miscellaneous}
\begin{prop}\label{prop:X_nf is Borel}
    Let $(X,\mathcal{X},G)$ be a Borel system. Then the set 
    $$\Xnf:=\{x\in X: \exists g\in G\setminus\{e\},\, \, gx=x\}$$ 
    is a $G$-invariant Borel set.

\end{prop}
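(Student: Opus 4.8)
The plan is to dispose of $G$-invariance by a one-line computation and to obtain Borel measurability by passing to a topological model, where stabilizers become closed subgroups.

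For $G$-invariance: if $x\in\Xnf$, pick $g\in G\setminus\{\mathbf 0\}$ with $gx=x$. Since $G$ (being $\Z^k$ or $\R^k$) is abelian, for every $h\in G$ we get $g(hx)=(g+h)x=(h+g)x=h(gx)=hx$, so $hx\in\Xnf$. Hence $\Xnf$ is $G$-invariant.

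For measurability I would first reduce to a continuous action on a compact metric space. By \cite[Theorem 3.2]{V63} there exist a t.d.s.\ $(Y,G)$, a $G$-invariant Borel set $X'\subseteq Y$, and a $G$-equivariant Borel isomorphism $\varphi:X\to X'$. Since $\varphi(gx)=g\varphi(x)$, the equivalence $gx=x\iff g\varphi(x)=\varphi(x)$ shows that $\varphi$ maps $\Xnf$ onto the analogous set $\{y\in X':\exists g\in G\setminus\{\mathbf 0\},\,gy=y\}$, and since $\varphi$ is a Borel isomorphism it suffices to prove the latter is Borel. Because $X'$ is Borel in $Y$ and carries the restricted action, that set equals $X'\cap Y_{nf}$, where $Y_{nf}:=\{y\in Y:\exists g\in G\setminus\{\mathbf 0\},\,gy=y\}$; so it is enough to show $Y_{nf}$ is Borel in $Y$.

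Now $Y$ is compact metrizable (hence Hausdorff) and $G$ acts continuously, so $R:=\{(g,y)\in G\times Y:\,gy=y\}$ is closed, being the preimage of the diagonal of $Y\times Y$ under the continuous map $(g,y)\mapsto(gy,y)$. If $G=\Z^k$ then $Y_{nf}=\bigcup_{g\in\Z^k\setminus\{\mathbf 0\}}\{y:gy=y\}$ is a countable union of closed sets, hence $F_\sigma$; in fact in this case one can skip the topological model entirely, since $\{x\in X:gx=x\}$ is the preimage of the (Borel) diagonal of $X\times X$ under the Borel map $x\mapsto(gx,x)$, and $\Xnf$ is the countable union of these sets over $g\in\Z^k\setminus\{\mathbf 0\}$. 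The only genuine obstacle is $G=\R^k$, where the union defining $Y_{nf}$ is uncountable and projections of closed sets are only analytic in general. To circumvent this I would exhaust $\R^k\setminus\{\mathbf 0\}$ by compact sets: setting $A_m:=\{g\in\R^k: 1/m\le\|g\|_\infty\le m\}$, one has $\R^k\setminus\{\mathbf 0\}=\bigcup_{m\ge 1}A_m$, and $R\cap(A_m\times Y)$ is a closed subset of the compact set $A_m\times Y$, hence compact, so its image under the projection $\pi_Y:G\times Y\to Y$ is compact. Therefore
\[
Y_{nf}=\pi_Y\big(R\cap((G\setminus\{\mathbf 0\})\times Y)\big)=\bigcup_{m\ge 1}\pi_Y\big(R\cap(A_m\times Y)\big)
\]
is $\sigma$-compact, in particular Borel, which finishes the proof. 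The crux is thus the passage to a topological model (turning stabilizers into closed subgroups and making $R$ closed) combined with the compact-exhaustion trick that upgrades a merely analytic projection to an $F_\sigma$ set.
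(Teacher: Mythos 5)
Your proof is correct, but it takes a genuinely different route from the paper's. The paper argues directly on the Borel system: it invokes the Becker--Kechris fact that every stabilizer $G_x$ of a Borel action of a Polish group is closed, then the Auslander--Moore result that the stabilizer map $s:x\mapsto G_x$ into the space $F(G)$ of closed subsets with the Fell topology is Borel, and concludes by writing $\Xnf$ as the preimage under $s$ of the (Borel) complement of the single point $\{\mathbf 0\}$ in $F(G)$. You instead pass to a topological realization via Varadarajan's theorem (which the paper itself records in Section 2.5), observe that the relation $R=\{(g,y):gy=y\}$ is closed for a jointly continuous action on a compact metric space, and then use the compact exhaustion $\R^k\setminus\{\mathbf 0\}=\bigcup_m A_m$ to express $Y_{nf}$ as a countable union of projections of compact sets, hence $\sigma$-compact; the compact-exhaustion step is exactly what is needed, since an unrestricted projection of a closed set would only be analytic. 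Your argument is more elementary and self-contained (it needs no Fell-topology machinery and, as you note, degenerates to a trivial countable union for $G=\Z^k$), at the cost of routing through the topological model; the paper's argument stays intrinsic to the Borel action and would transfer verbatim to actions of more general Polish groups where a convenient compact exhaustion of $G\setminus\{\mathbf 0\}$ by sets bounded away from the identity is less immediate.
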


\begin{proof}
    If $gx=x$ then for all $h\in G$, $hgx=hx$, thus $\Xnf$ is $G$-invariant. By 
    \cite[Theorem 2.3.2]{BK96} for all $x\in X$, $G_x=\{g\in G|\,\, gx=x\}$ is closed. Let $F(G)$ be the space of all non-empty closed subsets of $G$ equipped with the Fell topology (\cite{fell1962hausdorff}). By \cite[Proposition II.2.3]{auslander1966unitary} the map $s:X\rightarrow F(G)$ given by $s(x)=G_x$, is a Borel map. Note  $\Xnf=s^{-1}(F(G)\setminus\{e\})$. Thus $\Xnf$ is a Borel set.  
\end{proof}

\bibliographystyle{alpha}
\bibliography{flow, universal_bib}

\noindent Yonatan Gutman, Institute of Mathematics, Polish Academy of Sciences, ul. \'{S}niadeckich 8, Warsaw, 00-656, Poland.

\noindent\textit{Email address}: \texttt{gutman@impan.pl}

\noindent Qiang Huo, School of Mathematical Sciences, University of Science and Technology of China, Hefei, Anhui, 230026, P. R. China.

\noindent\textit{Email address}: \texttt{qianghuo@ustc.edu.cn}

\end{document}